\newenvironment{smallarray}[1]
 {\null\,\vcenter\bgroup\scriptsize
  \arraycolsep=.13885em
  \hbox\bgroup$\array{@{}#1@{}}}
 {\endarray$\egroup\egroup\,\null}
\newcommand{\keywords}[1]{\noindent\textbf{Key words and phrases: } #1}
\newcommand{\MSC}[1]{\noindent\textbf{2010 Mathematics Subject Classification: } #1}
\numberwithin{equation}{section}
\newcommand{\adjunct}[2]{\nsststile{#2}{#1}}
\newcommand{\Nearrow}{\rotatebox[origin=c]{45}{$\Rightarrow$}}
\newcommand{\Nwarrow}{\rotatebox[origin=c]{135}{$\Rightarrow$}}
\newcommand{\Searrow}{\rotatebox[origin=c]{-45}{$\Rightarrow$}}
\newcommand{\Swarrow}{\rotatebox[origin=c]{225}{$\Rightarrow$}}
\DeclareMathOperator{\id}{id}
\renewcommand{\textbf}[1]{\text{\fontseries{b}\selectfont{\upshape #1}}}
\def\Cate#1{\textbf{#1}}
	\def\S{\Cate{S}}
	\def\cat{\Cate{Cat}}
	\def\Cat{\Cate{CAT}}
\def\LAdj{\mathrm{LAdj}}
\def\RAdj{\mathrm{RAdj}}
\def\dia{\mathrm{dia}}
\def\op{{\mathrm{op}}}
\renewcommand\hom{\mathrm{Hom}}
\def\ho{\mathrm{Ho}}
\def\Ch{\mathrm{Ch}}
\def\hocolim{\mathrm{Hocolim}}
\def\holim{\mathrm{Holim}}
\def\Ob{\mathrm{Ob}}
\def\Der{\mathbf {D}}
\def\ker{\mathrm{Ker}}
\def\Ker{\mathrm{Ker}}
\def\pt{\mathrm{pt}}
\def\colim{\mathrm{colim}}
\def\Ab{\mathrm{Ab}}
\def\Hom{\mathrm{Hom}}
\def\pr{\mathrm{pr}}
\def\Set{\mathrm{Set}}
\def\sSet{\mathrm{sSet}}
\def\Top{\mathrm{Top}}
\def\Sp{\mathrm{Sp}}
\def\cD{\mathcal{D}}
\def\Z{\mathbb{Z}}
\def\N{\mathbb{N}}
\def\H{\mathcal H}
\def\U{\mathcal U}
\def\V{\mathcal V}
\def\D{\mathbb D}
\def\class#1{\mathcal{#1}}
	\def\B{\class{B}}
	\def\C{\class{C}}
	\def\W{\mathcal W}
	\def\Q{\mathcal Q}
	\def\G{\mathcal G}
\renewcommand\t{\mathbf{t}}
\def\add{\mathrm{add}}
\def\Add{\mathrm{Add}}
\def\bbone{\mathbf{1}}
\def\bbtwo{\mathbf{2}}
\def\mod#1{\mathrm{Mod}\text{-}#1}
\theoremstyle{plain}
  \newtheorem{maintheorem}{Theorem}
  \newtheorem{maincorollary}[maintheorem]{Corollary}
\theoremstyle{definition}
	\newtheorem{mainquestion}{Question}
\theoremstyle{plain}
	\newtheorem{theorem}{Theorem}[section]
	\newtheorem{lemma}[theorem]{Lemma}
	\newtheorem{proposition}[theorem]{Proposition}
	\newtheorem{corollary}[theorem]{Corollary}
\theoremstyle{remark}
	\newtheorem{remark}[theorem]{Remark}
\theoremstyle{definition}
	\newtheorem{example}[theorem]{Example}
	\newtheorem{definition}[theorem]{Definition}
	\newtheorem{notation}[theorem]{Notation}
\title{$t$-Structures on stable derivators and Grothendieck hearts}
\author{Manuel Saor\'{\i}n\footnote{The first named author was supported by the research projects from
the Ministerio de Econom\'{\i}a y Competitividad of Spain (MTM2016-77445-P) and the Fundaci\'on `S\'eneca' of
Murcia (19880/GERM/15), both with a part of FEDER funds.} \and Jan \v{S}\v{t}ov\'{\i}\v{c}ek\footnote{The second named author was supported by Neuron Fund for Support of Science.} \and Simone Virili\footnote{The third named author was supported by the Ministerio de Econom\'{\i}a y Competitividad of
Spain via a grant `Juan de la Cierva-formaci\'on'. He was also supported by the
Fundaci\'on `S\'eneca' of Murcia (19880/GERM/15) with a part of FEDER funds.}}
\begin{document}

\maketitle

\begin{abstract}
We prove that, given any strong and stable derivator and a $t$-structure in its base triangulated category $\cal D$, the $t$-structure canonically lifts to all the (coherent) diagram categories and each incoherent diagram in the heart uniquely lifts to a coherent one.
We use this to show that the $t$-structure being compactly generated implies that the coaisle is closed under directed homotopy colimits which, in turn, implies that the heart is an (Ab.$5$) Abelian category. If, moreover, $\cal D$ is a well-generated algebraic or topological triangulated category, then the heart of any accessibly embedded (in particular, compactly generated) $t$-structure has a generator.
As a consequence, it follows that the heart of any compactly generated $t$-structure of a well-generated algebraic or topological triangulated category is a Grothendieck Abelian category.
\end{abstract}

\keywords $t$-structures, heart, derivator, Grothendieck Abelian category.\\
\MSC  18E30, 18E40, 18E15.

\setcounter{tocdepth}{1}
\tableofcontents

\addcontentsline{toc}{section}{Introduction}
\section*{Introduction}

$t$-Structures in triangulated categories were introduced by Beilinson, Bernstein and Deligne \cite{BBD} in their study of perverse
sheaves on an algebraic or analytic variety. A $t$-structure in a triangulated category $\mathcal{D}$  is a pair of full
subcategories satisfying a suitable set of axioms (see the precise definition
in Subsection~\ref{tria_and_t_subs}) which guarantees that their intersection is an
Abelian category $\mathcal{H}$, called the heart of the $t$-structure. One then naturally defines a cohomological functor
\[
{H}\colon\mathcal{D}\longrightarrow\mathcal{H},
\] 
which allows to develop an intrinsic (co)homology theory,
where the homology ``spaces'' are objects of $\mathcal{D}$
itself.
$t$-Structures have been used in many
branches of mathematics, with special impact in algebraic geometry, algebraic topology
and representation theory of algebras. 

\medskip
Given a $t$-structure in a triangulated category $\cal D$, and considering the induced Abelian category $\H$, a natural problem consists in finding necessary and sufficient conditions on
the $t$-structure and on the ambient category for the heart to be a ``nice'' Abelian
category. When our triangulated category has (co)products, the category $\H$ is known to be (co)complete (see \cite[Prop.\,3.2]{parra2015direct}) and, using the classical hierarchy of Abelian categories due to Grothendieck \cite{Grothendieck}, the natural question  
 is the following: 

\begin{mainquestion} \label{mainques.when-grothendieck}
When is the heart $\H$ a Grothendieck Abelian category?
\end{mainquestion}

As one might expect, the real issue is to prove that $\H$ has exact directed colimits. In this respect, we encounter a phenomenon which seems invisible to the triangulated category $\cal D$ alone, namely directed homotopy colimits and the question on whether $\H$ is closed under these. To work with homotopy colimits, we need a certain enhancement of $\cal D$ and we choose Grothendieck derivators, as this is in some sense the minimal homotopy theoretic framework where a well-behaved calculus of homotopy (co)limits is available. This said, we are immediately led to the second main problem of the paper:

\begin{mainquestion} \label{mainques.t-str-and-derivators}
How do $t$-structures interact with strong and stable Grothendieck derivators?
\end{mainquestion}

The study of Question~\ref{mainques.when-grothendieck} has a long tradition in algebra. In its initial steps, the focus was almost exclusively put on the case of the so-called Happel-Reiten-Smal{\o} \mbox{$t$-structures} introduced in \cite{HRS} and treated in much greater generality in~\cite[Sec.\,5.4]{bondal-vdb}.
These are $t$-structures on a derived category $\Der(\cal G)$ of an Abelian category $\cal G$ induced by a torsion pair in $\cal G$.
The study of conditions for the heart of the Happel-Reiten-Smal{\o} $t$-structure in $\Der(\mathcal{G})$, for a Grothendieck or module category $\mathcal{G}$, to be again a Grothendieck or a module category, has received a lot of attention in recent years (see \cite{HKM, CGM, CMT, MT, parra2015direct, parra2016addendum} and \cite{StCo}).
Let us remark that the first named author with C.\,Parra \cite{parra2015direct, parra2016addendum} gave a complete answer to Question~\ref{mainques.when-grothendieck} in this particular case: the heart of the Happel-Reiten-Smal{\o} $t$-structure $\t_\tau$, associated to a torsion pair $\tau=(\mathcal T,\mathcal F)$ in a Grothendieck Abelian category $\mathcal{G}$, is again a Grothendieck Abelian category if, and only if, the torsion-free class $\mathcal{F}$ is closed under taking directed colimits in $\mathcal{G}$.  

\smallskip
When more general $t$-structures are considered, the answers to Question~\ref{mainques.when-grothendieck} are more scarce and they are often inspired in one or another way by tilting theory. In a sense, the classical derived Morita Theorems of Rickard~\cite{rickard1991derived} (for the bounded setting) and Keller~\cite{kel94-dg} (for the unbounded case) can be seen as the first examples where an answer to the problem is given. Namely, if $A$ and $B$ are ordinary algebras
 and ${}_BT_A$ is a two-sided tilting complex (see \cite{rickard1991derived}), then the triangulated equivalence $-\otimes_B^\mathbb{L}T\colon\Der(B)\tilde{\longrightarrow}\Der(A)$ takes the canonical $t$-structure $(\Der^{\leq 0}(B),\Der^{\geq 0}(B))$ to the pair $(T^{\perp_{>0}},T^{\perp_{<0}})$, which is then a $t$-structure in $\Der(A)$, whose heart is equivalent to $\mod B$. This includes the case of a classical ($n$-)tilting module in the sense of \cite{miyashita-tilt}. The dual of a (not necessarily classical) ($n$-)tilting $A$-module is that of a (big) ($n$-)cotilting $A$-module $Q$, in which case the second named author proved that $(_{}^{\perp_{<0}}Q,_{}^{\perp_{>0}}Q)$ is a $t$-structure in $\Der(A)$ whose heart is a Grothendieck Abelian category (see \cite[Thm.\,6.2]{stovicek2014derived}). These two  results have recently been extended to include all silting sets of compact objects and all pure-injective cosilting sets in a compactly generated triangulated category (see \cite[Prop.\,4.2]{NSZ}, also for the used terminology). Results saying that cosilting $t$-structures have Grothendieck hearts under appropriate assumptions also include~\cite[Thm.\,3.6]{hugel2016torsion} and~\cite[Prop.\,3.10]{MV-silting}, whereas conditions under which the $t$-structure $(T^{\perp_{>0}},T^{\perp_{<0}})$ obtained from a non-classical tilting module $T$ has a Grothendieck heart were given in~\cite{bazzoni2016t}.
A common feature of the results in \cite{stovicek2014derived,NSZ,hugel2016torsion,MV-silting} is that the heart is proved to be a Grothendieck Abelian category rather indirectly, using the pure-injectivity of certain cotilting or cosilting objects and ideas from model theory of modules.

\smallskip
Last but not least, there is a family of results which provides evidence that $t$-structures generated by a set of compact objects should, under all reasonable circumstances, have Grothendieck hearts.
Briefly summarizing, \cite[Thm.\,4.10]{parra2017hearts} establishes this result for any compactly generated $t$-structure in the derived category $\Der(R)$ of a commutative ring $R$ which is given by a left bounded filtration by supports, \cite[Coro.\,4.10]{hugel2016torsion} gives the same result for any non-degenerate compactly generated $t$-structure in an algebraic compactly generated triangulated category,
and finally Bondarko establishes such a result in~\cite[Thm.\,5.4.2]{bondarko} for practically all triangulated categories which one encounters in practice.

\medskip
Our approach here is rather different from the ones above and, in some sense, much more direct. It is more in the spirit of Lurie's~\cite[Rem.\,1.3.5.23]{Lurie_higher_algebra}, where a criterion for a $t$-structure to have a Grothendieck heart is given in the language of $\infty$-categories. Our aim is to reach the corresponding criterion for exactness of directed colimits in the heart faster and in a way hopefully more accessible to the readers concerned with the representation theory of associative algebras and related fields.

\smallskip
To outline our strategy, consider a $t$-structure $\t = (\U,\Sigma\V)$ on a triangulated category $\cal D$ with coproducts and let $\H=\U\cap\Sigma\V$ be the heart of $\t$. Then we have the following well-known chain of implications:

\begin{center}
\begin{tabular}{ccccc}
$\t$ is compactly &$\implies$& $\V$ is closed &$\implies$& $\H$ has exact \\
generated (see p.\pageref{compactly_gen_t_st}) && under coproducts  && coproducts.
\end{tabular}
\end{center}
However, $\V$ being closed under coproducts is \emph{not} enough to have exact directed colimits in $\H$. This follows from the main results of \cite{parra2015direct, parra2016addendum} (see also Example~\ref{ex.HRS t-structure smash but non-htpy} below).
What we need instead is a stronger condition: we assume that $\V$ is closed under directed homotopy colimits. For instance, in the case of the derived category $\cal D = \Der(\G)$ of a Grothendieck Abelian category $\G$, the homotopy colimit is the total left derived functor
\[ \hocolim_I = \mathbf{L}\colim_I\colon \Der(\G^I) \longrightarrow \Der(\G) \]
of the usual colimit functor $\colim_I\colon \G^I \to \G$. The problem is, of course, that albeit there always exists a canonical comparison functor $\Der(\G^I) \to \Der(\G)^I$, it is typically far from being an equivalence.
At this point, the language of derivators naturally enters the scene, as the categories of the form $\Der(\G^I)$ can be naturally assembled to form a derivator (for all the undefined terminology we refer to Section \ref{prel_on_ders_sec}). To see this, one should recall that, when $\mathcal{M}$ is a cofibrantly generated model category with $\mathcal{W}$ its class of weak equivalences (e.g., the usual injective model structure of $\Ch(\G)$, with $\W$ the class of quasi-isomorphisms), the functor category $\mathcal{M}^I$ admits a cofibrantly generated model structure, with weak equivalences calculated level-wise, for each small category $I$, and the assignment $I\mapsto\mathbb{D}(I):=\text{Ho}(\mathcal{M}^I)$ gives a well-defined derivator (see\cite{Cis03}), i.e.\ a $2$-functor
\[
\D\colon \cat^{\op}\longrightarrow \Cat
\]
which satisfies certain axioms, where $\cat$ is the $2$-category of small categories and $\Cat$ is the 2-``category" of all categories. Furthermore, $\D$ is strong and stable provided $\mathcal{M}$ is a stable model category. The axioms in particular imply that the natural range of this $2$-functor is naturally the 2-``category'' of all triangulated categories, that is, $\mathbb{D}(I)$ is a triangulated category for each $I\in \cat$. A prototypical example is precisely the assignment $\D\colon I \mapsto \Der(\G^I)$ for a Grothendieck Abelian category $\G$.

\medskip
The main results of the paper will be proven in general for such a strong and stable derivator $\D$. Note also that, denoting by $\bbone$ the one-point category with the identity morphism only, and letting $\mathcal{D}:=\D(\bbone)$ the base category of $\D$, one usually views $\D$ as an enhancement of the triangulated category $\cal D$, which is in some sense the minimalistic enhancement which allows for a well-behaved calculus of homotopy (co)limits or, more generally, homotopy Kan extensions (see \cite{Moritz}). More precisely, a homotopy colimit functor $\hocolim_I\colon \D(I)\to\D(\bbone)$ is simply a left adjoint to $\D(\pt_I)\colon \D(\bbone)\to\D(I)$, where $\pt_I$ is the unique functor $I\to\bbone$ in $\cat$. The existence of the adjoint is ensured by the axioms of a derivator and this notion of homotopy colimit is consistent with our previous definition of $\hocolim_I$ as the total left derived functor of $\colim_I$.

\smallskip
The advantage of derivators is that we now can give  a precise meaning to what it means that $\V$ is closed under directed homotopy colimits but, on the other hand, we now fully hit Question~\ref{mainques.t-str-and-derivators}. We started with a $t$-structure $\t=(\U,\Sigma\V)$ in the base category $\D(\bbone)$, and it is a natural question whether this $t$-structure lifts to the other triangulated categories $\D(I)$, with $I\in\cat$. Even when it does, what we are really concerned with is the relation between the heart in $\D(I)$ and the $I$-shaped diagrams in the heart of $\D(\bbone)$. Luckily, both these problems have a very natural solution that is condensed in the following theorem (for the proof see Section~\ref{sec_tstr}):

\begin{maintheorem}\label{mainthm.lift_tstructure_general_thm}
Let $\D\colon \cat^{\op}\to \Cat$ be a strong and stable derivator, and $\t=(\U,\Sigma \V)$ a $t$-structure in $\D(\bbone)$. If we let 
\[\begin{split}
&\U_I:=\{X\in \D(I):X_i\in \U,\ \forall i\in I\},\\
&\V_I:=\{Y\in \D(I):Y_i\in \V,\ \forall i\in I\},
\end{split}\] 
then $\t_I:=(\U_I,\Sigma \V_I)$ is a $t$-structure in $\D(I)$. Furthermore, the diagram functor 
\[
\dia_I\colon \D(I)\longrightarrow \D(\bbone)^I
\] 
induces an equivalence $\H_{I}\cong \H^I$ between the heart $\H_{I}$ of $\t_I$ and the category $\H^I$ of diagrams of shape $I$ in the heart of $\t$.
\end{maintheorem}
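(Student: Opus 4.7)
The plan is to first establish the $t$-structure axioms for $\t_I$ on $\D(I)$, and then analyze the heart. The closure conditions $\Sigma\U_I\subseteq\U_I$ and $\Sigma^{-1}(\Sigma\V_I)\subseteq\Sigma\V_I$ are immediate from stability of $\D$: the evaluation functors $i^*\colon\D(I)\to\D(\bbone)$ are exact triangulated functors, so $(\Sigma X)_i\cong\Sigma(X_i)$, and the pointwise definitions transport the corresponding closure conditions directly from $\U,\V\subseteq\D(\bbone)$.

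The substantive part of the first statement is the construction of the truncation triangle. Given $X\in\D(I)$, the aim is to assemble the pointwise truncation triangles $U_i\to X_i\to V'_i\to\Sigma U_i$ (with $U_i\in\U$ and $V'_i\in\V$) into a coherent triangle $U\to X\to V'\to\Sigma U$ in $\D(I)$. The key tool is strength of the derivator: the partial diagram functor $\D(I\times\bbtwo)\to\D(I)^{\bbtwo}$ is full and essentially surjective, and this allows one to lift the pointwise truncation morphisms $X_i\to V'_i$ to a coherent morphism $X\to V'$ in $\D(I)$. Completing to a triangle in the triangulated category $\D(I)$ yields the desired $U\to X\to V'\to\Sigma U$, and pointwise conservativity of the family $\{i^*\}_{i\in I}$ combined with the pointwise $t$-structure axioms confirms $U\in\U_I$ and $V'\in\V_I$. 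Once the truncation triangles exist, the orthogonality $\hom_{\D(I)}(\U_I,\V_I)=0$ follows by the standard argument: any morphism $U\to V$ with $U\in\U_I$ and $V\in\V_I$ factors through the truncation triangle of $V$, reducing to the pointwise orthogonality.

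For the equivalence $\H_I\cong\H^I$, observe first that $\H_I=\U_I\cap\Sigma\V_I$ consists precisely of those $X\in\D(I)$ with every $X_i\in\H$, so $\dia_I$ indeed sends $\H_I$ into $\H^I$. Fully faithfulness should follow by showing that morphisms in $\D(I)$ between heart objects carry no higher homotopical data beyond the underlying natural transformation; this can be formalized via a spectral-sequence type argument controlling the difference between $\hom_{\D(I)}(X,Y)$ and $\hom_{\D(\bbone)^I}(\dia_I X,\dia_I Y)$, with the crucial vanishings provided pointwise by the $t$-structure on $\D(\bbone)$. Essential surjectivity is the most delicate step: given a diagram $F\colon I\to\H$, one must lift it to a coherent object $\widetilde F\in\H_I$. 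Here one exploits that $F$ takes values in the abelian category $\H$: using strength to lift morphisms coherently, and the vanishing of higher $\Ext$-like obstructions (which are detected pointwise by the $t$-structure), one inductively assembles $\widetilde F$ from pointwise data via Kan extensions.

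The main obstacle throughout is this coherent lifting. The non-faithfulness of $\dia_I$ in general means ordinary pointwise constructions require strength-based arguments, and the essential surjectivity needed for the equivalence of hearts is particularly delicate; what makes everything work is the special nature of heart objects, where the potential higher-homotopical obstructions all vanish by the pointwise $t$-structure orthogonality.
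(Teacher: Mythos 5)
The central difficulty that the paper's proof grapples with is the \emph{coherence problem}: the diagram functor $\dia_I\colon\D(I)\to\D(\bbone)^I$ is in general neither full, nor faithful, nor essentially surjective. Your proposal, however, treats strength as if it solved this problem outright, and this is where the argument breaks down.

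Strength asserts only that the partial diagram functor $\D(I\times\bbtwo)\to\D(I)^\bbtwo$ is full and essentially surjective. To use it to produce a coherent morphism $X\to V'$ in $\D(I)$, you would first need the target $V'$ to be an object of $\D(I)$ and then a morphism $\dia_I X\to\dia_I V'$ in $\D(\bbone)^I$. Neither is available at the point where you invoke it: the pointwise truncations $\{V'_i\}_{i\in I}$ are merely a family of objects of $\D(\bbone)$, with no given assembly into a coherent object, and $\{X_i\to V'_i\}_{i\in I}$ is a family of pointwise morphisms, not yet a natural transformation of incoherent diagrams (that requires first constructing the transition maps $V'_i\to V'_j$, compatible with those of $X$). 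So your appeal to strength is circular: you are assuming exactly the coherent lift whose existence is the content of the theorem. The same issue reappears in your sketch of essential surjectivity of $\dia_I\restriction_{\H_I}$, which you also attribute to ``strength'' plus vanishing of obstructions, without explaining how the lifting is actually carried out for a general small category $I$.

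The paper's route is entirely different and substantially longer precisely because this coherence problem is genuine. It first proves a standalone lifting theorem (Theorem~\ref{main_lifting_theorem}): under the hypothesis $\D(\bbone)(\Sigma^n X_i,X_j)=0$ for $n>0$, the functor $\dia_I$ is an isomorphism on Hom-sets and essentially surjective onto the relevant subcategory. The proof of that theorem is the real work: it proceeds by induction on categorical length for finite-length shapes via an explicit decomposition of any $\mathscr X\in\D(I)$ (Lemma~\ref{lifting_inductive_step}), extends to countable-length shapes via Milnor colimits, and handles arbitrary $I$ by passing along the homotopical epimorphism $(\overset{\to}{\Delta}/I)^{op}\to I$, whose fibers have terminal objects. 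Strength enters only at the base of the induction, not as a global tool. With Theorem~\ref{main_lifting_theorem} in hand, the truncation triangle for $\t_I$ is then constructed, again by induction on length, by truncating the pieces of the decomposition of Lemma~\ref{lifting_inductive_step} and invoking the $3\times3$ lemma (Lemma~\ref{lifting_tstructure_finite_lem}), then lifting to countable and arbitrary $I$ as before. Your ``spectral-sequence type argument controlling the difference'' is morally pointing at the orthogonality hypotheses of Theorem~\ref{main_lifting_theorem}, but it does not contain a workable plan; without the length-induction and the $(\overset{\to}{\Delta}/I)^{op}$ reduction, there is no mechanism to produce coherent lifts over an arbitrary $I$.
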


Now we can state our main answer to the part of Question~\ref{mainques.when-grothendieck} which is concerned with the exactness of directed colimits in the heart of a $t$-structure. This result will be proved in Section~\ref{sec_dirlim_heart}.

\begin{maintheorem} \label{mainthm.htpy smashing has AB5 heart}
Let $\D\colon \cat^{\op}\to \Cat$ be a strong stable derivator and let $\t=(\U,\Sigma\V)$ be a $t$-structure in $\D(\bbone)$. Then we have the implications:

\begin{center}
\begin{tabular}{ccccc}
$\t$ is compactly &$\implies$& $\V$ is closed under &$\implies$& $\H$ has exact \\
generated && directed homotopy colimits && directed colimits.
\end{tabular}
\end{center}
\end{maintheorem}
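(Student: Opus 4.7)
The plan is to prove the two implications separately. The first is a soft consequence of the standard fact that compact objects commute with directed homotopy colimits; the second is where Theorem~\ref{mainthm.lift_tstructure_general_thm} genuinely enters: I will lift a directed diagram $X\colon I\to\H$ to a coherent diagram $\tilde X\in\H_I\subseteq\D(I)$, take its homotopy colimit in $\D(\bbone)$, and show both that it lies in $\H$ and that it computes the ordinary $1$-categorical colimit of $X$.

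For the first implication, assume $\t$ is generated by a set $\mathcal{S}$ of compact objects of $\D(\bbone)$. Then $\V$ (or equivalently $\Sigma\V$) is characterised by the vanishing of $\Hom_{\D(\bbone)}(S, -[n])$ for $S\in\mathcal{S}$ and $n$ in an appropriate range of degrees. Given a small directed category $I$ and a coherent diagram $Y\in\V_I$, the natural isomorphism
\[
\Hom_{\D(\bbone)}(S, (\hocolim_I Y)[n]) \;\cong\; \colim_I \Hom_{\D(\bbone)}(S, Y_i[n]) \;=\; 0
\]
for each such $S$ and $n$ shows $\hocolim_I Y \in \V$. The only nontrivial ingredient is the compactness isomorphism in the derivator language, which follows formally from the presentation of $\hocolim_I$ over a directed shape as a transfinite composition of cofibres of maps between coproducts.

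For the second implication, fix a small directed category $I$ and a diagram $X\colon I\to\H$, and let $\tilde X\in\H_I$ be its unique coherent lift supplied by Theorem~\ref{mainthm.lift_tstructure_general_thm}. Set $Z := \hocolim_I\tilde X$. Since $\U$ is closed under coproducts and extensions it is also closed under directed homotopy colimits, hence $Z\in\U$; combined with the hypothesis that $\V$ is closed under directed homotopy colimits, this gives $Z\in\U\cap\Sigma\V=\H$. A direct adjunction chase using the equivalence $\dia_I\colon \H_I \to \H^I$ identifies $Z$ with the $1$-categorical colimit of $X$ in $\H$, since for $Y\in\H$ the constant diagram $\pt_I^*Y$ lies in $\H_I$ and corresponds under $\dia_I$ to $\text{const}_Y\in\H^I$. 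For exactness, a short exact sequence $0\to A\to B\to C\to 0$ in $\H^I$ lifts through $\dia_I$ to a short exact sequence in $\H_I$, which in turn extends to a distinguished triangle $\tilde A\to\tilde B\to\tilde C\to\Sigma\tilde A$ in $\D(I)$; applying the triangulated functor $\hocolim_I$ and using that all three resulting vertices again lie in $\H$, the long exact sequence for the cohomological functor $H\colon\D(\bbone)\to\H$ collapses to the desired short exact sequence $0\to\colim_I A\to\colim_I B\to\colim_I C\to 0$.

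The main subtlety I anticipate is the identification of $\hocolim_I\tilde X$ with the ordinary colimit inside $\H$: the adjunction is formal, but it is only meaningful thanks to Theorem~\ref{mainthm.lift_tstructure_general_thm}, which is what bridges the coherent and incoherent worlds. Without that equivalence, $\hocolim_I\tilde X$ would just be an object of $\D(\bbone)$ with no a priori universal property with respect to the original incoherent diagram $X\in\H^I$. Once this bridge is in place, everything else reduces to standard manipulations: exactness of $\hocolim_I$ as a triangulated functor, the closure properties of $\U$ and $\Sigma\V$ under directed homotopy colimits, and the elementary fact that a distinguished triangle with three consecutive vertices in $\H$ yields a short exact sequence in $\H$.
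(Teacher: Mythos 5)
Your second implication is handled exactly as in the paper: lift the short exact sequence in $\H^I$ to a triangle in $\D(I)$ with first three vertices in $\H_I$ via Theorem~\ref{mainthm.lift_tstructure_general_thm}, apply $\hocolim_I$, use the closure of $\U$ under all homotopy colimits (Proposition~\ref{prop.aisles closed for hocolim}) and of $\V$ under directed ones to conclude the vertices stay in $\H$, and identify $\hocolim_I$ with $\varinjlim_I$ on the heart by an adjunction argument (the paper's Lemma~\ref{lem.hocolim=colim} and Corollary~\ref{cor.hocolim=colim}). You skip the intermediate non-smashing version of the identification, but the substance is the same.

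The first implication, however, contains a genuine gap. You assert that the isomorphism $\D(\bbone)(S,\hocolim_I \mathscr{Y}[n])\cong\varinjlim_I\D(\bbone)(S,\mathscr{Y}_i[n])$ for $S$ compact and $I$ directed ``follows formally from the presentation of $\hocolim_I$ over a directed shape as a transfinite composition of cofibres of maps between coproducts.'' This is precisely the statement that compact objects are homotopically finitely presented (Proposition~\ref{prop.cpt = hom fin pres}), and it is not formal. The Milnor-colimit trick handles $\omega$-chains, but a general directed poset cannot be reduced to one, and a transfinite composition of length with uncountable cofinality cannot be treated by the telescope argument at the limit stage; the Bousfield--Kan presentation of $\hocolim_I$ via coproducts and pushouts does not give the directed colimit of $\D(\bbone)(S,\mathscr{Y}_i)$ directly but rather a realization that must still be identified with it (this is where vanishing of higher derived colimits for filtered shapes enters, which is again a real theorem, not a formality). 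The paper establishes this via the spectral enrichment of strong stable derivators: it reduces the claim to showing that the sphere spectrum is homotopically finitely presented in $\D_\Sp$, which is proved in Appendix~\ref{sec_spectra} by an honest CW-complex/compactness argument (Proposition~\ref{prop.htpy groups and spectra}). Without supplying an argument of this type (or a correct elementary substitute), your proof of the first implication is incomplete.
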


What remains is to give a criterion for the heart to have a generator. As it turns out, this is, unlike the (Ab.$5$) condition, a problem of mostly a technical nature. For most triangulated categories (or derivators) and $t$-structures arising in practice, the answer is affirmative. In the next theorem we give a general criterion in the setting of what one may call ``accessible stable derivators'', that is, the ones associated with a stable combinatorial model category. This will be treated in Section~\ref{sec_generators}.

\begin{maintheorem} \label{mainthm.Grothendieck heart}
Let $(\C,\W,\cal B,\cal F)$ be a stable combinatorial model category, $\cal D=\ho(\C)$ the triangulated homotopy category and $\t=(\U,\Sigma\V)$ a $\lambda$-accessibly embedded $t$-structure in $\cal D$ for some infinite regular cardinal $\lambda$ (e.g., $\cal D$ a well-generated algebraic or topological triangulated category, with a $t$-structure generated by a small set of objects).
\\
Then the heart $\H=\U\cap\Sigma \V$ of $\t$ has a generator. If, in particular, $\t$ is homotopically smashing (equivalently, $\lambda=\aleph_0$), then $\H$ is a Grothendieck Abelian category.
\end{maintheorem}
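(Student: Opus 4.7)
The plan is to produce a generator of the heart $\H$ by suitably truncating a set of generators of the ambient triangulated category $\cal D$, and then to obtain the Grothendieck conclusion by invoking Theorem~B.

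First, since $\C$ is a stable combinatorial model category, a theorem of Rosick\'y ensures that $\cal D=\ho(\C)$ is a well-generated triangulated category. Hence there exist an infinite regular cardinal $\mu$, which we may harmlessly enlarge to satisfy $\mu\geq\lambda$, and a set $\G$ of $\mu$-compact generators of $\cal D$; in particular, $\t$ remains $\mu$-accessibly embedded.

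Second, I would show that the set
\[
\G_{\H} \;:=\; \{\,H^{0}(\Sigma^{n}G) : G\in \G,\ n\in \Z\,\}
\]
is a generating set of $\H$, where $H^{0}\colon \cal D\to \H$ is the heart cohomology functor. Given a nonzero $M\in \H$, the generation property of $\G$ in $\cal D$ yields some $G\in \G$ and some $n\in \Z$ with $\Hom_{\cal D}(\Sigma^{n}G, M)\neq 0$. A bookkeeping argument using the truncation triangles of $\Sigma^{n}G$ together with the orthogonality relations of the $t$-structure (in particular the vanishing $\Hom_{\cal D}(\t^{\leq -1},\H)=0$) allows one to translate this nonvanishing into a nonvanishing of $\Hom_{\H}(H^{0}(\Sigma^{k}G), M)$ for some integer $k$. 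Hence $\G_{\H}$ separates the objects of $\H$, and since $\H$ is cocomplete by \cite[Prop.~3.2]{parra2015direct}, a coproduct of the elements of $\G_\H$ is a generator of $\H$.

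Third, for the Grothendieck conclusion, suppose $\t$ is homotopically smashing, equivalently $\lambda=\aleph_0$: indeed, $\aleph_0$-accessible embedding of $\V$ into $\cal D$ corresponds precisely to closure of $\V$ under directed homotopy colimits in the derivator associated to $\C$. Theorem~B then guarantees that $\H$ has exact directed colimits, i.e., satisfies the (Ab.$5$) axiom. Combined with the generator just constructed and the cocompleteness of $\H$, this shows that $\H$ is a Grothendieck category.

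The principal technical difficulty is the construction of the generator for $\H$ in the general accessibly embedded case: one must verify that the $\mu$-compact generators of $\cal D$ survive the truncations and remain separating inside the heart, which requires a careful interplay between the well-generation rank of $\cal D$ and the accessibility rank of the $t$-structure; the homotopically smashing case then bundles this with Theorem~B to upgrade the generator to a Grothendieck structure.
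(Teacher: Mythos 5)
Your approach to the existence of a generator diverges from the paper's, and it contains a genuine gap. The paper's argument works at the level of the underlying model category: using that $\C$ is locally presentable, one chooses a set $\Q$ of objects of $\C$ so that every object of $\C$ is a $\lambda$-directed colimit of objects from $\Q$, and then invokes Proposition~\ref{prop.lambda-colim} to see that $H^0\circ F\colon \C\to\H$ preserves $\lambda$-directed colimits. Consequently any $M = H^0(F(C))\in\H$ is itself a $\lambda$-directed colimit of objects $H^0(F(Q_i))$, and $\overline{\Q}:=\{H^0(F(Q)):Q\in\Q\}$ generates $\H$. The crucial point is that the presentability rank of $\C$ and the accessibility rank of $\t$ are compared \emph{through} Proposition~\ref{prop.lambda-colim}, which is the technical heart of the matter.

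Your argument instead tries to work entirely inside the triangulated category $\cal D$ with a set $\G$ of $\mu$-compact generators of $\cal D$. There are two problems. First, $\G$ has no \emph{a priori} relation to $\t$: it generates $\cal D$ as a triangulated category, but $\t$ is only assumed to be $\lambda$-accessibly embedded, not to be generated by $\G$ or by any set related to $\G$. Second, and more seriously, the ``bookkeeping argument'' does not go through. Suppose $M\in\H$ is nonzero and $\Hom_{\cal D}(\Sigma^n G,M)\ne 0$. Applying $\Hom(-,M)$ to the truncation triangle $\tau^{\le 0}(\Sigma^n G)\to\Sigma^n G\to\tau^{\ge 1}(\Sigma^n G)$ gives the exact sequence
\[
\Hom(\tau^{\ge 1}\Sigma^n G,M)\longrightarrow\Hom(\Sigma^n G,M)\longrightarrow\Hom(\tau^{\le 0}\Sigma^n G,M)\cong\Hom_{\H}(H^0(\Sigma^n G),M).
\]
The orthogonality $\Hom(\Sigma\U,\H)=0$ that you invoke controls the $\U$-truncation side, but it gives no control whatsoever over $\Hom(\tau^{\ge 1}\Sigma^n G,M)$: indeed $\tau^{\ge 1}\Sigma^n G\in\V$, $M\in\Sigma\V$, and $\Hom(\V,\Sigma\V)$ is \emph{not} zero in general (it contains, after a shift, $\Ext^1_{\H}$-type groups). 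Thus a nonzero morphism $\Sigma^n G\to M$ may lie entirely in the image of $\Hom(\tau^{\ge 1}\Sigma^n G,M)$, so that the induced morphism $H^0(\Sigma^n G)\to M$ vanishes. Iterating the truncation on $\tau^{\ge 1}\Sigma^n G$ only pushes the problem to higher coaisle truncations and never terminates; in particular, for degenerate $t$-structures (which are not excluded by the hypotheses) the morphism can remain ``invisible'' to all the $H^0(\Sigma^k G)$. This is precisely the obstruction the paper's proof avoids by descending to $\C$, where $\lambda$-directed colimits behave $1$-categorically and can be matched directly against the accessibility of $\V$. The third step of your argument (combining the generator with Theorem~\ref{mainthm.htpy smashing has AB5 heart} to get a Grothendieck category in the homotopically smashing case) is correct, but it rests on the unproven second step.
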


As an immediate consequence, we obtain the following corollary which provides an alternative to~\cite[Thm.\,5.4.2]{bondarko} in showing that the heart of a compactly generated $t$-structure is, in practice, always a Grothendieck Abelian category (see also Remark \ref{comparison_rem}).

\begin{maincorollary} \label{maincor.Grothendieck heart}
Let $\cal D=\ho(\C)$, where $\C$ is a combinatorial stable model category. If $\t=(\mathcal{U},\Sigma\mathcal{V})$ is a compactly generated $t$-structure in $\cal D$, then the heart $\H=\mathcal{U}\cap\Sigma\mathcal{V}$ is a Grothendieck Abelian category.
\end{maincorollary}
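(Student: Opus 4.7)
The plan is to reduce the corollary to a direct invocation of Theorem~\ref{mainthm.Grothendieck heart}, since essentially all the work has already been done there. The hypothesis that $\cal D=\ho(\C)$ for a combinatorial stable model category matches the setup of Theorem~\ref{mainthm.Grothendieck heart} verbatim, so the only thing left is to verify that a compactly generated $t$-structure satisfies the additional assumption, namely that it is $\lambda$-accessibly embedded for $\lambda=\aleph_0$, equivalently that it is homotopically smashing.

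First, I would invoke Theorem~\ref{mainthm.htpy smashing has AB5 heart}: the first implication there states that if $\t=(\U,\Sigma\V)$ is compactly generated then $\V$ is closed under directed homotopy colimits, which is precisely the definition of being homotopically smashing. Second, I would appeal to the equivalence stated in the parenthetical of Theorem~\ref{mainthm.Grothendieck heart}, which identifies homotopically smashing $t$-structures with those that are $\aleph_0$-accessibly embedded. These two observations together place our compactly generated $t$-structure $\t$ inside the scope of the ``in particular'' clause of Theorem~\ref{mainthm.Grothendieck heart} with $\lambda=\aleph_0$.

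Applying that clause directly then yields that $\H$ is a Grothendieck category, completing the proof. There is no genuine obstacle here: the corollary is essentially a bookkeeping exercise that records the combined strength of Theorems~\ref{mainthm.htpy smashing has AB5 heart} and~\ref{mainthm.Grothendieck heart}. The only subtlety worth a sentence is to point out that the chain of implications from ``compactly generated'' to ``homotopically smashing'' to ``$\aleph_0$-accessibly embedded'' is the substantive content, and that all three steps in this chain are established earlier; once this is observed, the conclusion is immediate.
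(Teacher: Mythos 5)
Your proof is correct and follows essentially the same chain of implications as the paper: compactly generated $\Rightarrow$ homotopically smashing ($\Leftrightarrow\ \aleph_0$-accessibly embedded), then apply Theorem~\ref{mainthm.Grothendieck heart}. The only cosmetic difference is that you invoke the ``in particular'' clause of Theorem~\ref{mainthm.Grothendieck heart} to get the Grothendieck conclusion in one shot, whereas the paper explicitly reassembles the (Ab.$5$) condition from Theorem~\ref{mainthm.htpy smashing has AB5 heart} and the generator from Theorem~\ref{mainthm.Grothendieck heart}; substantively these are identical.
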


\bigskip\noindent
{\bf Acknowledgement.} 
It is a pleasure for us to thank Fritz H\"ormann and Moritz Groth for helpful discussions and suggestions. We are also grateful to Rosie Laking and Gustavo Jasso for  pointing out some analogies with results of Jacob Lurie in the $\infty$-categorical setting, and to Mikhail Bondarko for telling us about his results in \cite{bondarko}. Finally, we are indebted to an anonymous referee whose numerous remarks and suggestions have helped to greatly improve the exposition and clarity of our results.

\section{Preliminaries and notation}\label{sec_prelim}

In this first section we fix most of the notations and conventions about general category theory that will be needed in the rest of the paper. This includes basic facts about additive and Abelian categories, torsion pairs, triangulated categories, $t$-structures, categories with weak equivalences and model categories. All the results included in this section are known, so the proofs are omitted in favor of suitable references to the existing literature.

\subsection{Conventions and basic results in category theory}
Given a category $\C$ and two objects $x$ and $y$ in $\C$, we denote by $\C(x,y):=\hom_\C(x,y)$ the $\hom$-set of all morphism $x\to y$ in $\C$. Throughout the paper, all the subcategories will be assumed to be full, so we just say ``subcategory'' to mean ``full subcategory''. Similarly, we  generally drop the distinction between a subcategory $\S\subseteq \C$ and a subclass $\S\subseteq \Ob(\C)$, as the two things univocally determine each other. 

We denote by $\Ab$ the category of Abelian groups.

\medskip\noindent
\textbf{Ordinals.}
Any ordinal $\lambda$ can be viewed as a category in the following way: the objects of $\lambda$ are the ordinals $\alpha<\lambda$ and, given $\alpha$, $\beta<\lambda$, the $\hom$-set $\lambda(\alpha,\beta)$ is a point if $\alpha\leq \beta$, while it is empty otherwise. Following this convention,
\begin{itemize}
\item $\bbone=\{0\}$ is the category with one object and no non-identity morphisms;
\item $\bbtwo=\{0\to 1\}$ is the category with one non-identity morphism;
\item in general, $\mathbf{n}=\{0\to 1\to\ldots\to (n-1)\}$, for any $n\in\N_{>0}$.
\end{itemize}

\medskip\noindent
\textbf{Functor categories, limits and colimits.}
A category $I$ is said to be ({\em skeletally}) {\em small} when (the isomorphism classes of) its objects form a set. If $\C$ and $I$
are an arbitrary and a small category, respectively, a functor $I\to \C$ is said to be a {\em diagram} in $\C$ of shape $I$. The category of diagrams in $\C$ of shape $I$, and natural transformations between them, is denoted by $\C^I$.
A diagram $X$ of shape $I$, is also denoted as $(X_i)_{i\in I}$, where $X_i
:= X(i)$ for each $i$ in $I$. When every diagram of shape $I$ has a limit (resp., colimit), we say
that $\C$ has all $I$-limits (resp., $I$-colimits). In this case, $\lim_I \colon \C^I\to \C$ (resp., $\colim_I \colon \C^I\to \C$) will denote
the ($I$-)limit (resp., ($I$-)colimit) functor, which is the right (resp., left) adjoint to the constant diagram functor
$\kappa_I\colon \C\to \C^I$. A particular case, very important for us,  comes when $I$ is a directed set, viewed as
a small category in the usual way. The corresponding colimit
functor is the ($I$-)directed colimit functor 
$\varinjlim_I\colon \C^I\to \C$. The $I$-diagrams in $\C$ are usually called \emph{directed
systems}  of shape $I$.

The category $\C$ is said to be
{\em complete} (resp., {\em cocomplete, bicomplete}) when all $I$-limits (resp., $I$-colimits, both) exist in $\C$, for any small category $I$.

\medskip\noindent
\textbf{$2$-Categories of categories.}
We denote by $\cat$ the $2$-category of small categories and by $\cat^{\op}$ the $2$-category obtained by reversing the direction of the functors in $\cat$ (but letting the direction of natural transformations unchanged). Similarly, we  denote by $\Cat$ the \mbox{$2$-``category"} of all categories. This, when taken literally, may originate some set-theoretical problems that, for our constructions, can be safely ignored: see the discussion after \cite[Def.\,1.1]{Moritz}.

Given two natural transformations $\alpha\colon F\Rightarrow G\colon \C\to \cD$ and $\beta\colon G\Rightarrow H\colon \C\to \cD$, we denote by $\beta\circ \alpha\colon F\Rightarrow H$ their vertical composition, that is, $(\beta\circ\alpha)_C:=\beta_C\circ \alpha_C$ for each $C\in \C$. Furthermore, given two natural transformations $\alpha\colon F_1\Rightarrow F_2\colon \C\to \cD$ and $\beta\colon G_1\Rightarrow G_2\colon \cD\to \mathcal{E}$, we denote by $\beta\circledast\alpha\colon G_1\circ F_1\Rightarrow G_2\circ F_2$ their horizontal composition, that is, $(\beta \circledast\alpha)_C:=\beta_{F_2(C)}\circ G_1(\alpha_C)=G_2(\alpha_C)\circ \beta_{F_1(C)}$, for each $C\in \C$. With a slight abuse of notation, we also let $\beta\circledast F_1:=\beta \circledast \id_{F_1}$ and $G_1\circledast \alpha:=\id_{G_1}\circledast\,\alpha$.

\medskip\noindent
\textbf{Categories of adjoint functors and mates.}
Let $\C$ and $\cD$ be two categories. Given a pair of adjoint functors $L:\C\rightleftarrows \cD:R$, we use the following compact notation 
\[
L\adjunct{\eta}{\epsilon}R
\] 
to mean that $L$ is left adjoint to $R$, with unit $\eta\colon \id_\C\Rightarrow R\circ L$ and counit $\epsilon\colon L\circ R\Rightarrow \id_{\cD}$. In particular, the following relations hold
\begin{equation}\label{adj_tria_eq}
\id_{L}=(\epsilon\circledast L)\circ( L\circledast\eta) \quad\text{and}\quad \id_{R}=(R\circledast\epsilon) \circ (\eta \circledast R)
\end{equation}
Furthermore, we let $\LAdj(\C,\cD)$ (resp., $\RAdj(\C,\cD)$) be the (full) subcategory of $\Cat(\C,\cD)$ spanned by the left (resp., right) adjoint functors.
\begin{lemma}[{\cite[Sec.\,2]{kelly1974review} or \cite[Lem.\,1.14]{Moritz}}]\label{lem.adjoints}
Given two categories $\C$ and $\cD$, there is an equivalence of categories 
\[
\Phi\colon \LAdj(\C,\cD)\longrightarrow \RAdj(\cD,\C)^\op,
\] 
which is constructed as follows:
\begin{itemize}
\item for each left adjoint $L$ fix a right adjoint $\Phi(L)$ of $L$ (that is, $L\dashv\Phi(L)$);
\item given two left adjoints $L$ and $L'$, where $L\adjunct{\eta}{\epsilon}\Phi(L)$ and $L'\adjunct{\eta'}{\epsilon'}\Phi(L')$, and a natural transformation $\alpha\colon L\Rightarrow L'$ one defines $\Phi(\alpha)\colon \Phi(L')\Rightarrow \Phi(L)$ as follows:
\[
\Phi(\alpha):=(\Phi L\circledast \epsilon')\circ (\Phi L\circledast\alpha \circledast\Phi L')\circ (\eta\circledast\Phi L').
\]
\end{itemize}
Furthermore, the bijection $\Phi_{L,L'}\colon \LAdj(\C,\cD)(L,L')\to \RAdj(\cD,\C)(\Phi(L'),\Phi(L))$ respects compositions and identities. In particular, a given $\alpha\colon L\Rightarrow L'$ is a natural isomorphism if and only if $\Phi(\alpha)$ is a natural isomorphism.
\end{lemma}
Given a natural transformation between left adjoints $\alpha\colon L\Rightarrow L'$, its image $\Phi(\alpha)$ in $\RAdj(\C,\cD)$ via the equivalence described above is said to be the {\em total mate} of, or the natural transformation {\em conjugated} to, $\alpha$.
Consider now a square in $\Cat$, that commutes up to the natural transformation $\alpha$:
\[
\xymatrix@C=10pt@R=6pt{
\C\ar[dd]_L\ar[rr]^F&&\C'\ar[dd]^{L'}\ar@{}[ddll]|-{\Swarrow\alpha}\\
\\
\cD\ar[rr]_G&&\cD'
}
\]
If there are adjunctions $L\adjunct{\eta}{\epsilon}R\colon \C\to \cD$, and  $L'\adjunct{\eta'}{\epsilon'}R'\colon \C'\to \cD'$, then we can consider the following pasting diagram:
\[
\xymatrix@C=10pt@R=6pt{
\C\ar@{<-}[dd]_R\ar[rr]^F&&\C'\ar@{<-}[dd]^{R'}\ar@{}[ddll]|-{\alpha_*\Searrow}\\
\\
\cD\ar[rr]_G&&\cD'
}\qquad
\xymatrix@C=11pt@R=2pt{
&\cD\ar@/_10pt/@{=}[ddrr]\ar[rr]^R&&\C\ar[dd]_L\ar[rr]^F&&\C'\ar[dd]^{L'}\ar@/_-10pt/@{=}[ddrr]&&\\
:=&&\ \ \ \ \ \ \ \ \Swarrow\text{\scriptsize$\epsilon$}&&\Swarrow\text{\scriptsize$\alpha$}&&\text{\scriptsize$\eta'$}\Swarrow\ \ \ \ \ \ \ \ &\\
&&&\cD\ar[rr]_G&&\cD'\ar[rr]_{R'}&&\C'
}
\]
that is, $\alpha_*:=(R'G\circledast\epsilon)\circ (R'\circledast\alpha\circledast R)\circ (\eta'\circledast FR)$. Dually, given  adjunctions $F_!\adjunct{\eta^F}{\epsilon^F}F\colon \C'\to \C$, and  $G_!\adjunct{\eta^G}{\epsilon^G}G\colon \cD'\to \cD$, we can consider the following pasting:
\[
\xymatrix@C=10pt@R=6pt{
\C\ar@{<-}[dd]_{F_!}\ar[rr]^{L}&&\cD\ar@{<-}[dd]^{G_!}\ar@{}[ddll]|-{\alpha_!\Nwarrow}\\
&&&&:=\\
\C'\ar[rr]_{L'}&&\cD'
}\qquad
\xymatrix@C=11pt@R=2pt{
\C'\ar@/_10pt/@{=}[ddrr]\ar[rr]^{F_!}&&\C\ar[dd]_{F}\ar[rr]^L&&\cD\ar[dd]^{G}\ar@/_-10pt/@{=}[ddrr]&&\\
&\ \ \ \ \ \ \ \ \Nearrow\text{\scriptsize$\eta^F$}&&\Nearrow\text{\scriptsize$\alpha$}&&\text{\scriptsize$\epsilon^G$}\Nearrow\ \ \ \ \ \ \ \ &\\
&&\C'\ar[rr]_{L'}&&\cD'\ar[rr]_{G_!}&&\cD
}
\]
that is, $\alpha_!:=(\epsilon^G\circledast LF_!)\circ (G_!\circledast \alpha \circledast F_!)\circ (G_!L'\circledast\eta^F)$. In fact, there is a close relation between $\alpha_!$ and $\alpha_*$:

\begin{lemma}[{\cite[Sec.\,2]{kelly1974review}}]\label{mates_are_conjugated_lemma}
Let $\C$, $\C'$, $\cD$ and $\cD'$ be categories, consider the adjunctions:
\[
L\adjunct{\eta}{\epsilon}R\colon \C\to \cD, \qquad  L'\adjunct{\eta'}{\epsilon'}R'\colon \C'\to \cD', \qquad F_!\adjunct{\eta^F}{\epsilon^F}F\colon \C'\to \C, \qquad G_!\adjunct{\eta^G}{\epsilon^G}G\colon \cD'\to \cD.
\]
and a natural transformation $\alpha\colon L'\circ F\Rightarrow G\circ L$. Then $\alpha_*\colon  F\circ R\Rightarrow R'\circ G$ is the natural transformation conjugated to $\alpha_!\colon G_!\circ L'\Rightarrow L\circ F_!$. In particular, $\alpha_*$ is a natural isomorphism if, and only if, $\alpha_!$ is a natural isomorphism.
Furthermore, the operations $(-)_*$ and $(-)_!$ are each other's inverse, in the sense that 
\[
(\alpha_*)_!=\alpha=(\alpha_!)_*.
\]
\end{lemma}

\subsection{Additive categories, Abelian categories and torsion pairs}

\medskip\noindent
\textbf{Additive categories.}
Recall that a category is said to be additive if it is an $\Ab$-enriched category with a $0$-object and finite products and coproducts. An important property of additive categories is that, in this context, finite products and coproducts coincide. More precisely, given an additive category $\C$ and two objects $A_1$ and $A_2$, any coproduct $(A, (\epsilon_i\colon A_i\to A)_{i=1,2})$ (resp., any product $(A, (\pi_i\colon A\to A_i)_{i=1,2})$) can be completed to a biproduct, that is, a triple $(A, (\epsilon_i\colon A_i\to A)_{i=1,2}, (\pi_i\colon A\to A_i)_{i=1,2})$ such that the following identities hold:
\[
\pi_1\epsilon_1=\id_{A_1},\quad \pi_2\epsilon_2=\id_{A_2},\quad \epsilon_1\pi_1+\epsilon_2\pi_2=\id_A,\quad \pi_2\epsilon_1=0=\pi_1\epsilon_2.
\]
This allows one to interpret (column-finite) matrices as morphisms between (infinite) coproducts.

\begin{definition}
Let $\C$ be an additive category and consider the following sequence in $\C$:
\[
\xymatrix{
A\ar[r]^\alpha& B\ar[r]^\beta& C.
}
\]
We say that the above sequence is {\em split-exact} if there exist $\alpha'\colon B\to A$ and $\beta'\colon C\to B$ such that the triple $(B,(\alpha,\beta'),(\alpha',\beta))$ is a biproduct of $A$ and $C$.
\end{definition}

\begin{remark} 
Note that a sequence $A\stackrel{\alpha}{\longrightarrow}B\stackrel{\beta}{\longrightarrow}C$ in an additive category $\mathcal{C}$ is split-exact if, and only if, either one of the following equivalent conditions holds:
\begin{enumerate}[\rm (1)]
\item it is a {\em kernel-cokernel sequence} (i.e., $\alpha$ is the kernel of $\beta$, and $\beta$ is the cokernel of $\alpha$) such that $\alpha$ is a section (resp., $\beta$ is a retraction);
\item there are morphisms $\alpha'\colon B\rightarrow A$ and $\beta'\colon C\rightarrow B$ such that $\alpha'\alpha =\id_A$, $\beta\beta'=\id_C$ and $\id_B=\alpha\alpha'+\beta'\beta$ (i.e., the conditions $\beta\alpha=0=\alpha'\beta'$ are redundant). Indeed, notice for example that $\beta\alpha=\beta\alpha\alpha'\alpha=\beta(\id_B-\beta'\beta)\alpha=(\id_C-\beta\beta')\beta\alpha=0$.
\end{enumerate}
\end{remark}

When $\alpha\colon A\to A'$ and $\beta\colon B'\to B$ are isomorphisms, the following are trivial examples of split-exact sequences:
\[
\xymatrix@C=33pt{
A\ar[r]^\alpha& A'\ar[r]^0& 0&0\ar[r]^0& B'\ar[r]^{\beta}& B.
}
\]

\begin{example}\label{countable_splitexact_ex}
Let $\C$ be an additive category. 
\begin{enumerate}[\rm (1)]
\item Consider the following sequence in $\C$:
\[
\xymatrix@C=55pt{
A\ar[r]^-{\scriptsize\alpha:=\left[\begin{matrix}\alpha_1\\ \alpha_2\end{matrix}\right]}& A'\sqcup B'\ar[r]^-{\scriptsize\beta:=\left[\begin{matrix}\beta_1& \beta_2\end{matrix}\right]}&B.
}
\]
This sequence is split-exact if $\beta \circ\alpha=0$ and both $\alpha_1\colon A\to A'$ and $\beta_2\colon B'\to B$ are isomorphisms as, in this case, it is enough to choose $\alpha':=[\begin{smallarray}{cc}\alpha_1^{-1}&0\end{smallarray}]$ and $\beta':=\left[\begin{smallarray}{c}0 \\\beta_2^{-1}\end{smallarray}\right]$. Similarly, the sequence is split-exact if $\beta \circ\alpha=0$ and both $\alpha_2$ and $\beta_1$ are isomorphisms.
\item If $X\in\C$ if an object such that the countable coproduct $X^{(\N)}$ of copies of $X$ exists in $\C$, then the following sequence is split-exact:
\[
\xymatrix@C=130pt{
X^{(\N)}\ar[r]^{\alpha=
\left[\begin{smallarray}{cccc}
1&&&\\
-1&1&&{\text{\Large $0$}}\\
&-1&1&\\
{\text{\Large $0$}}&&\ddots&\ddots
\end{smallarray}\right]
}&X^{(\N)}\ar[r]^{
\beta=\left[\begin{smallarray}{cccc}
1&1&1&\cdots
\end{smallarray}\right]}&X,
}
\] 
where $\alpha':=\left[\begin{smallarray}{ccccc}
0&-1&-1&-1&\cdots\\
&0&-1&-1&\cdots\\
&&0&-1&\cdots\\
{\text{\Large $0$}}&&&\ddots&\ddots
\end{smallarray}\right]
$ and $\beta':=\left[\begin{smallarray}{c}1 \\0 \\0 \\\vdots\end{smallarray}\right]$.
\end{enumerate}
\end{example}

\medskip\noindent
\textbf{Subcategories of additive categories.}
Given an additive category $\C$ and a (always full) subcategory $\S \subseteq  \C$, we shall denote by $\add_\C(\S)$ (resp., $\Add_\C(\S)$), or simply $\add(\S)$ (resp., $\Add(\S)$) if no confusion is possible, the subcategory of $\C$ spanned by the direct summands of finite (resp., small) coproducts of objects in $\S$.

We use the notation $\varinjlim \S=\S$ to mean that $\S$ is closed under taking directed colimits, that is, given $F\colon I\to \C$ for $I$ a directed set, such that $F(i)\in \S$ for all $i\in I$, whenever the colimit $\varinjlim_IF$ exists in the ambient category $\C$, it is an object of $\S$.

A family of objects $\S$ is said to be a {\em set of generators} when the functor
\[
\xymatrix{
\coprod_{S\in \S}\C(S,-) \colon \C\longrightarrow \Ab
}
\] 
is faithful. An object $G$ is a {\em generator} of $\C$ when $\{G\}$ is a set of
generators.

\medskip\noindent
\textbf{(Ab.$5$) and Grothendieck (Abelian) categories.}
Let $\C$ be an Abelian category. 
Recall from  \cite{Grothendieck} that  $\C$ is called (Ab.$5$)  when it is (Ab.$3$) (=cocomplete)  and the directed colimit functor $\varinjlim_I\colon \C^I\to \C$
 is exact, for any directed set $I$.
An (Ab.$5$) Abelian category $\G$ having a set of generators (equivalently, a generator), is said to be a {\em Grothendieck Abelian category}. Such a category always has enough injectives, and any of its objects has an injective envelope
(see \cite{Grothendieck}). Moreover, it is always a complete (and cocomplete) category (see \cite[Coro.\,X.4.4]{S75}).

\medskip\noindent
\textbf{Finitely presented objects.}
When $\G$ is a cocomplete additive category, an object $X$ of $\G$ is called {\em finitely presented} when $\G(X, -) \colon \G \to \Ab$ preserves directed colimits, that is, for any directed set $I$ and any diagram $(Y_i)_{i\in I}$ in $\G^{I}$, the following canonical map is an isomorphism
\[
\colim_I\, \G(X,Y_i)\longrightarrow \G(X,\colim_I\,Y_i)
\]
where the first colimit is taken in $\Ab$ and the second in $\G$. When $\G$ is a Grothendieck Abelian category with a set of finitely presented generators which, in this setting, is equivalent to say that each object of $\G$ is a directed colimit of finitely
presented objects, we say that $\G$ is {\em locally finitely presented}.

\medskip\noindent
\textbf{Torsion pairs.}
A {\em torsion pair} in an Abelian category $\C$ is a pair $\tau = (\mathcal T , \mathcal F)$ of subcategories satisfying the following
two conditions:
\begin{enumerate}[(TP.1)]
\item $\C(T, F) = 0$, for all $T\in\mathcal T$ and $F\in\mathcal F$;
\item for any object $X$ of $\C$ there is a short exact sequence $\varepsilon_X\colon 0 \to T_X\to X\to F_X \to 0$, where $T_X\in \mathcal T$ and
$F_X \in\mathcal F$.
\end{enumerate}
In such case,
the short exact sequence $\varepsilon_X$ from (TP.2) is
uniquely determined, up to a unique isomorphism, and the assignment
$X\mapsto T_X$ (resp. $X\mapsto F_X$) underlies a functor $\C\to \mathcal T$ (resp.,  $\C\to \mathcal F$) which is right
(resp., left) adjoint to the inclusion functor $\mathcal T\to \C$ (resp., $\mathcal F\to \C$). We say that $\tau$ is of {\em finite type} provided $\varinjlim\mathcal{F}=\mathcal{F}$.

\subsection{Triangulated categories and $t$-structures}\label{tria_and_t_subs}

\medskip\noindent
\textbf{Triangulated categories.}
We refer  to \cite{Neeman} for the precise definition of triangulated category. In particular, given a triangulated category $\mathcal D$, we  always denote by $\Sigma\colon\mathcal D\to \mathcal D$ the {\em suspension functor}, and we denote ({\em distinguished}) {\em triangles} either by $X \to Y\to  Z\overset{+}\to$ or  $X\to Y \to Z\to \Sigma X$.
Unlike the terminology used in the abstract setting of additive categories, in the
context of triangulated categories a weaker version of the notion of ``set of generators'' is commonly used. Namely,
a set $\S \subseteq \mathcal D$ is called a {\em set of generators} of $\mathcal D$ if an object $X$ of $\mathcal D$ is zero whenever $\mathcal D(\Sigma^kS, X) = 0$,
for all $S \in\S$ and $k \in\Z$. In case $\mathcal D$ has coproducts, we  say that an object $X$ is {\em compact} when the functor $\mathcal D(X,-) \colon \mathcal D\to \Ab$ preserves coproducts. We  say that $\mathcal D$ is {\em compactly generated} when it has a set of compact generators. 

Given a set $\mathcal X$ of objects in $\mathcal D$ and a subset $I\subseteq \Z$, we let
\begin{align*}
\mathcal{X}^{\perp_{I}}&:=\{Y\in\mathcal D:\mathcal D(X,\Sigma^iY)=0\text{, for all }X\in\mathcal{X}\text{ and }i\in I\}\\
{}^{\perp_{I}}\mathcal{X}&:=\{Z\in\mathcal D:\mathcal D(Z,\Sigma^iX)=0\text{, for all }X\in\mathcal{X}\text{ and }i\in I\}.
\end{align*}
If $I=\{i\}$ for some $i\in \Z$, then we let $\mathcal{X}^{\perp_{i}}:=\mathcal{X}^{\perp_{I}}$ and ${}^{\perp_{i}}\mathcal{X}:={}^{\perp_{I}}\mathcal{X}$. If $i=0$, we even let $\mathcal{X}^{\perp_{}}:=\mathcal{X}^{\perp_{0}}$ and ${}^{\perp_{}}\mathcal{X}:={}^{\perp_{0}}\mathcal{X}$. In particular, $\S$ is a set of generators if, and only if $\S^{\perp_\Z}=0$.

\medskip\noindent
\textbf{Cohomological functors.}
Given a triangulated category $\mathcal D$ and an Abelian category $\C$, an additive functor $H^0 \colon \mathcal D\to \C$ is said to be a {\em cohomological functor} if, for any given triangle $X\to Y\to Z\to \Sigma X$, the sequence $H^0(X)\to H^0(Y)\to H^0(Z)$ is exact in $\C$. In particular, one obtains a long exact sequence as follows:
\[
\ldots \to H^{n-1}(Z)\to H^{n}(X)\to H^{n}(Y)\to H^{n}(Z)\to H^{n+1}(X)\to \ldots
\]
where $H^n := H^0 \circ \Sigma^{n}$, for any $n \in \Z$.

\medskip\noindent
\textbf{$t$-Structures.}
A {\em $t$-structure} in $\mathcal D$ is a pair $\t=(\U, \W)$ of subcategories satisfying the following axioms:
\begin{enumerate}[($t$-S.1)]
\item $\mathcal D(U, \Sigma^{-1}W) = 0$, for all $U \in \U$ and $W \in \W$;
\item $\Sigma \U \subseteq \U$;
\item for each $X \in \mathcal D$, there are $U_X \in \U$, $V_X \in \Sigma^{-1}\W$ and a triangle 
\[
U_X \to X \to V_X\to \Sigma U_X\qquad\text{in $\cD$.}
\] 
\end{enumerate}
One can see that, in such case, both classes are closed under taking direct summands in $\mathcal D$, that $\W = \Sigma(\U^{\perp})$ and $\U = {}^{\perp}(\Sigma^{-1}\W) = {}^{\perp}(\U^{\perp})$. For this reason, we  write a $t$-structure as $\t=(\U, \Sigma(\U^{\perp}))$ or $\t=(\U, \Sigma\V)$, meaning that $\V:=\U^{\perp}$. We will call $\U$ and $\U^{\perp}$ the {\em aisle} and the {\em co-aisle} of the $t$-structure, respectively.
The triangle of the above axiom ($t$-S.3) is
uniquely determined by $X$, up to a unique isomorphism, and defines
functors $\tau_{\U}\colon \mathcal D\to \U$ and $\tau^{\U^{\perp}}\colon \mathcal D\to \U^{\perp}$ which are right and left adjoints to the respective inclusions. We call them the {\em left} and {\em right truncation functors} with respect to the given $t$-structure $\t$. Furthermore, the above triangle will be referred to as the {\em truncation triangle} of $X$ with respect to $\t$. 

\smallskip
We say that a  $t$-structure $(\U,\Sigma\V)$ is generated by a set $\S$, when $\Sigma\V=\S^{\perp_{<0}}$ (equivalently, $\V =\S^{\perp_{\leq 0}}$). 
 When $\mathcal D$ has coproducts, we say that the $t$-structure is {\em compactly generated} when  it is generated by a set 
 $\S$ consisting of compact objects in $\mathcal{D}$; in this case, we say that $\S$ is a {\em set of compact generators}\label{compactly_gen_t_st} of the aisle $\U$ or of the $t$-structure.

\medskip\noindent
\textbf{Hearts.}
Given a $t$-structure $\t=(\U, \Sigma\V)$ in a triangulated category $\cD$, the subcategory $\mathcal H := \U \cap \Sigma \V\subseteq \cD$ is called the {\em heart} of the $t$-structure and it is an Abelian category, where the short exact sequences ``are'' the triangles of $\mathcal D$ with the three terms in $\H$. Moreover, with an obvious abuse of notation, the assignments $X\mapsto \tau_U \circ \tau^{\Sigma(\U^{\perp})}(X)$ and $X\mapsto\tau^{\Sigma(\U^{\perp})}\circ\tau_U (X)$ define two naturally isomorphic cohomological functors $H^0_\t\colon \mathcal D\to \H$ (see \cite{BBD}).

\begin{example} \label{expl.tilted t-str}
Let $\cal D$ be a triangulated category together with a $t$-structure $\t=(\U,\Sigma\V)$ and heart $\H:=\U\cap \Sigma\V$. Given a torsion pair $\tau=(\mathcal T,\mathcal F)$ in $\H$ we can define a new $t$-structure $\t_\tau=(\U_\tau,\Sigma\V_\tau)$ on $\cal D$, called the {\em Happel-Reiten-Smal{\o} tilt of $\t$ with respect to $\tau$} (see \cite{HRS}), where 
\[
\U_\tau:=\Sigma \U*\mathcal T,\quad \text{and}\qquad \V_\tau:=\mathcal F*\V,
\]
with the convention that, given two classes $\mathcal X,\ \mathcal Y\subseteq \cal D$, $Z\in \mathcal X*\mathcal Y$ if and only if there exists a triangle $X\to Z\to Y\to\Sigma X$ in $\mathcal D$, with $X\in\mathcal X$ and $Y\in\mathcal Y$.
\end{example}

\subsection{Categories with weak equivalences}

\noindent
\textbf{Categories with weak equivalences.} Let $\C$ be a category and let $\W$ be a collection of morphisms containing all the isomorphisms in $\C$. The pair $(\C,\W)$ is said to be a {\em category with weak equivalences} (or a {\em relative category}) if, given two composable morphisms $\phi$ and $\psi$, whenever two elements of $\{\phi,\psi,\psi\phi\}$ belong in $\W$ so does the third. The elements of $\W$ are called {\em weak equivalences}.

Given a category with weak equivalences $(\C,\W)$, its {\em universal localization} is a pair $(\C[\W^{-1}],F)$ consisting of a category $\C[\W^{-1}]$ and a  functor $F\colon\C\to \C[\W^{-1}]$ such that $F(\phi)$ is an isomorphism for all $\phi\in\W$, and it is universal with respect to these properties, that is, if $G\colon\C\to\cD$ is a functor such that $G(\phi)$ is an isomorphism for all $\phi\in\W$, then there exists a unique functor $G'\colon \C[\W^{-1}]\to \cD$ such that $G'F=G$ (see \cite{GZ}).

\medskip\noindent
\textbf{Derived functors.}
Let $(\C,\W)$, $(\C',\W')$ be categories with weak equivalences and suppose that their universal localizations exist. A functor $\mathbf{L}G\colon \C[\W^{-1}]\to \C'[\W'^{-1}]$ together with a natural transformation  $\alpha\colon \mathbf{L}G \circ F\Rightarrow F'\circ G$ is called the {\em total left derived functor} of \mbox{$G\colon \C\to \C'$} if the pair $(\mathbf{L}G,\alpha)$ is terminal between all pairs $(H,\beta)$ such that \mbox{$H\colon \C[\W^{-1}]\to \C'[\W'^{-1}]$} and $\beta\colon H \circ F\Rightarrow F'\circ G$. That is, given any $(H,\beta)$, there is a unique natural transformation $\gamma\colon H \Rightarrow \mathbf{L}G$ such that $\beta = \alpha \circ \gamma F$.
The notion of {\em total right derived functor} $\mathbf{R}G$ of $G$ is defined dually.

\medskip\noindent
\textbf{Model categories.} A \emph{model structure} on a bicomplete category $\mathcal{C}$ is a triple $(\W,\cal B,\cal F)$ of classes of morphisms, closed under retracts, called respectively the \emph{weak equivalences}, \emph{cofibrations}, and \emph{fibrations}, such that $(\C,\W)$ is a category with weak equivalence and satisfying a series of axioms, for which we refer to \cite{Hovey_libro, Dwyer}. 
A model category is then a bicomplete category $\C$ equipped with a model structure; i.e.\ a quadruple $(\C; \W, \cal B, \cal F)$.

The mere existence of a model structure for a category with weak equivalences allows one to give an explicit construction of the universal localization $\C[\W^{-1}]$, which is traditionally called the \emph{homotopy category} of $\C$ in this context, and denoted by $\ho(\C)$.
Even better, model structures allow to construct and compute derived functors as well. To this end, an adjunction $(F,G)\colon \C \rightleftarrows \C'$ between two model categories $\C$, $\C'$ with model structures $(\W,\cal B,\cal F)$ and $(\W',\cal B',\cal F')$, respectively, is called a \emph{Quillen adjunction} if it satisfies one of the following equivalent conditions (see~\cite[Lemma 1.3.4]{Hovey_libro}):
\begin{itemize}
\item the left adjoint $F\colon \C \to \C'$ preserves cofibrations and trivial cofibrations;
\item the right adjoint $G\colon \C' \to \C$ preserves fibrations and trivial fibrations.
\end{itemize}

Given a Quillen adjunction, the total derived functors $(\mathbf{L}F,\mathbf{R}G)$ exist and form an adjunction between $\ho(\C)$ and $\ho(\C')$. Moreover, $\mathbf{L}F(X)$ for an object $X$ of $\C$ can be computed by applying $F$ to a cofibrant replacement of $X$ and dually for $\mathbf{R}G$.

In the context of algebraic examples, model structures on Abelian (even Grothendieck) categories play a prominent role. In particular, many of our examples will arise from the so-called Abelian model structures (see \cite{hovey, becker2014models, gillespie2011model}).
The following example allows one to encode the machinery of classical homological algebra in the framework of model categories. 

\begin{example}\label{inj_mod_struc}
Given a Grothendieck Abelian category $\G$, we will denote by $\Ch(\G)$, $\mathcal K(\G)$ and $\Der(\G)$ the category of (unbounded) cochain complexes
of objects of $\G$, the homotopy category of $\G$ and the derived category of $\G$, respectively (see \cite{verdier1977categories, keller1998introduction}). 
Recall that $\Ch(\G)$ is a bicomplete category. With the class $\W$ of quasi-isomorphisms in $\Ch(\G)$, the pair $(\Ch(\G),\W)$ is a category with weak equivalences. Furthermore, taking $\mathcal F$ be the class of all the epimorphisms with dg-injective kernels and let $\mathcal B$ be the class of monomorphisms, then $\Ch(\G)$ with $(\W,\cal B,\cal F)$ is a model category (see for example \cite[Thm.\,2.3.13]{Hovey_libro}, \cite{hovey} or \cite{Gil07} for details and a proof). The homotopy category in this case is $\Der (\G)$. 
\end{example}

\section{Preliminaries on derivators}\label{prel_on_ders_sec}

In this section we fix some notational conventions and we recall the basic definitions about prederivators, derivators, morphisms and $2$-morphisms between them. Furthermore, we collect some useful facts about pointed, additive, strong and stable derivators. All these results are probably known to experts but, in several cases, we have not been able to find suitable references in the existing literature. In those cases we have included a short proof.

\subsection{(Pre)Derivators}\label{subs_pred}

\noindent
\textbf{Prederivators.} 
A {\em pre-derivator} is a strict $2$-functor
\[
\D\colon \cat^{\op}\to \Cat.
\]
All along this paper, we will use the following notational conventions:
\begin{itemize}
\item the letter $\D$ will denote a (pre)derivator;
\item  for any natural transformation $\alpha\colon u\Rightarrow v\colon J\to I$ in $\cat$, we will use the notation $\alpha^*:=\D(\alpha)\colon u^*\Rightarrow v^*\colon \D(I)\to \D(J)$. Furthermore, we denote respectively by $u_!$ and $u_*$ the left and the right adjoint to $u^*$ (whenever they exist), and call them respectively the {\em left} and {\em right homotopy Kan extension of $u$};
\item the letters $U$, $V$, $W$, $X$, $Y$, $Z$,  will be used either for objects in the base $\D(\bbone)$ or for (incoherent) diagrams in $\D(\bbone)$, that is, functors $I\to \D(\bbone)$, for some small category~$I$;
\item the letters $\mathscr U$, $\mathscr V$, $\mathscr W$, $\mathscr X$, $\mathscr Y$, $\mathscr Z$,  will be used for objects in some image $\D(I)$ of the derivator, for $I$ a category (possibly) different from $\bbone$. Such objects will be usually referred to as {\em coherent diagrams of shape $I$};
\item given $I\in \cat$, consider the unique functor $\pt_I\colon I\to \bbone$. We usually denote by $\hocolim_I:=(\pt_I)_!\colon \D(I)\to \D(\bbone)$ and $\holim_I:=(\pt_I)_*\colon \D(I)\to \D(\bbone)$ respectively the left and right homotopy Kan extensions of $\pt_I$; these functors are called respectively {\em homotopy colimit} and {\em homotopy limit}. 
\end{itemize}

For a given object $i\in I$, we also denote by $i$ the inclusion $i\colon\bbone\to I$ such that $0\mapsto i$. We obtain an evaluation functor $i^*\colon \D(I)\to \D(\bbone)$. For an object $\mathscr X\in \D(I)$, we let $\mathscr X_i:=i^*\mathscr X$. Similarly, for a morphism $\alpha\colon i\to j$ in $I$, one can interpret $\alpha$ as a natural transformation from $i\colon \bbone \to I$ to $j\colon \bbone \to I$. In this way, to any morphism $\alpha$ in $I$, we can associate $\alpha^*\colon i^*\Rightarrow j^*$. For an object $\mathscr X\in \D(I)$, we let $\mathscr X_\alpha:=\alpha^*_{\mathscr X}\colon \mathscr X_i\to \mathscr X_j$.
For any $I$ in $\cat$, we denote by 
\[
\dia_I\colon \D(I)\longrightarrow \D(\bbone)^{I}
\]
the {\em diagram functor}, such that, given $\mathscr X\in \D(I)$, $\dia_I(\mathscr X)\colon I\to \D(\bbone)$ is defined by $\dia_I(\mathscr X)(i\overset{\alpha}{\to} j)=(\mathscr X_i\overset{\mathscr X_\alpha}{\to} \mathscr X_j)$. We will refer to $\dia_I(\mathscr X)$ as the {\em underlying} (incoherent) {\em diagram} of the coherent diagram $\mathscr X$.

\begin{example}\label{description_constant_diagram}
Let $\D\colon \cat^{\op}\to \Cat$ be a prederivator. Given $I\in \cat$, consider the unique functor $\pt_I\colon I\to \bbone$, let $X\in \D(\bbone)$ and consider $\mathscr X:=\pt_I^*X\in \D(I)$. Then the underlying diagram $\dia_I(\mathscr X)\in \D(\bbone)^I$ is constant, that is, $\mathscr X_i= X$ for all $i\in I$ and, for all $\alpha\colon i\to j$ in $I$, the map $\mathscr X_\alpha\colon \mathscr X_i\to \mathscr X_j$ is the identity of $X$. The objects isomorphic to one of the form $\pt_I^*X\in \D(I)$ for some $X\in \D(\bbone)$ are called  {\em constant (coherent) diagrams}. 
\end{example}


\medskip\noindent
\textbf{The $2$-category of prederivators.} 
Prederivators can be organized into a $2$-category, as is explained in \cite[Sec.\,2]{Moritz}. Given prederivators $\D$ and $\D'$, a morphism of prederivators $F\colon\D\to\D'$ consists of functors $F_I\colon\D(I)\to\D'(I)$, one for each $I\in\cat$, and natural equivalences $\gamma^F_u\colon u^*\circ F_I\cong F_J\circ u^*$, one for each functor $u\colon J\to I$, subject to coherence relations (as in \cite{Moritz}, we will be sloppy and use the same symbol for $\gamma^F_u$ and its inverse). Given morphisms of prederivators $F,\, G\colon\D\to\D'$, a $2$-morphism (also called a natural transformation) $\alpha\colon F\Rightarrow G$ consists of a family of natural transformations $\alpha_I\colon F_I\Rightarrow G_I$, one for each $I\in\cat$, which are in a standard way compatible with the natural equivalences $\gamma^F_u$ and $\gamma^G_u$ as above for all functors $u$ in $\cat$. More details can be found in our references.

\begin{example}\label{expl.Yoneda_prederivator}
Any ordinary category $\C$ gives rise to a prederivator $\mathbb{Y}_\C\colon \cat^{\op}\to \Cat$ by the rule $\mathbb{Y}_\C(I)=\C^I$. Such prederivators are called \emph{represented}, \cite[Ex.\,1.2]{Moritz}, or \emph{Yoneda} prederivators. The construction shows that $\Cat$ embeds into the $2$-category of prederivators (up to usual set-theoretical issues, see \cite[Sec.\,2]{Moritz} for details).
\end{example}

\medskip\noindent
\textbf{Derivators.} 
A pre-derivator $\D$ is a {\em derivator} if it satisfies the following four axioms:
\begin{enumerate}[(Der.$1$)]
\item if $\coprod_{i\in I} J_i$ is a disjoint union of small categories, then the canonical functor $\D(\coprod_I J_i) \to \prod_I\D(J_i)$ is an equivalence of categories;
\item for each $I\in\cat$ and  $f\colon \mathscr X \to \mathscr Y$ a morphism in $\D(I)$, $f$ is an isomorphism if, and only if, $i^*(f)\colon i^*(\mathscr{X}) \to i^*(\mathscr{Y})$ is an isomorphism in $\D(\bbone)$ for each $i\in I$;
\item for each functor $u\colon I \to J$ in $\cat$, the functor $u^*$ has both a left adjoint $u_!$ and a right adjoint $u_*$ (i.e.\  homotopy Kan extensions are required to exist).
\end{enumerate}

Before stating the last axiom (Der.$4$), let us introduce the following notation. Suppose we have a natural transformation: $\alpha\colon v\circ u'\Rightarrow u\circ w$ in $\cat$:
\begin{equation}\label{gen_square_in_small_cats}
\xymatrix@C=10pt@R=6pt{
I\ar@{<-}[dd]_-u\ar@{<-}[rr]^-v&&I'\ar@{<-}[dd]^-{u'}\ar@{}[ddll]|-{\Swarrow\alpha}\\
\\
J\ar@{<-}[rr]_-w&&J',
}
\end{equation}
and let $\D\colon \cat^{\op}\to \Cat$ be a prederivator that satisfies (Der.3). Applying $\D$ to the above square, we get a natural transformation $\alpha^*\colon (u')^*\circ v^*\Rightarrow w^*\circ u^*$
\[
\xymatrix@C=10pt@R=6pt{
\D(I)\ar[dd]_-{u^*}\ar[rr]^-{v^*}&&\D(I')\ar[dd]^-{(u')^*}\ar@{}[ddll]|-{\Swarrow\alpha^*}\\
\\
\D(J)\ar[rr]_-{w^*}&&\D(J').
}
\]
In this setting, we define $\alpha_!:=(\alpha^*)_!\colon w_!\circ(u')^*\Rightarrow u^*\circ v_!$ and $\alpha_*:=(\alpha^*)_*\colon v^*\circ u_*\Rightarrow u'_*\circ w^*$, where $(-)_!$ and $(-)_*$ are defined right before Lemma \ref{mates_are_conjugated_lemma}. Hence, we deduce by that lemma that $\alpha_!$ is an isomorphism if and only if $\alpha_*$ is, and, furthermore, 
\begin{equation}\label{mates_and_derivators_eq}
(\alpha_!)_*=\alpha^*=(\alpha_*)_!.
\end{equation}
In order to properly state (Der.$4$), we need to start with a special case of the square \eqref{gen_square_in_small_cats}. Indeed, let $u\colon J\to I$ be a functor in $\cat$ and let $i\in I$. We define the {\em slice category} $u/i$ (resp., $i/u$) whose objects are pairs $(j,a\colon u(j)\to i)$ (resp., $(j,a\colon i\to u(j))$) where $j\in J$ and $a$ is a morphism in $I$. Furthermore, a morphism $f\colon (j,a)\to (j',a')$ in $u/i$ (resp., in $i/u$) is a morphism $f\colon j\to j'$ in $J$ such that $a=a'\circ u(f)$ (resp., $u(f)\circ a=a'$). Then, ``forgetting the second component of objects" gives well-defined functors
\[
\pr_i\colon u/i\longrightarrow J \qquad\text{and}\qquad \pr^i\colon i/u\longrightarrow J.
\]
We then get the following two squares in $\cat$ that commute up to a natural transformation:
\[
\xymatrix@C=18pt@R=8pt{
u/i\ar[dd]_-{\pt_{u/i}}\ar[rr]^-{\pr_i}&&J\ar[dd]^-{u}\ar@{}[ddll]|-{\Swarrow\gamma}\\
\\
\bbone\ar[rr]_-{i}&&I
}
\qquad\qquad\text{and}\qquad\qquad
\xymatrix@C=18pt@R=8pt{
i/u\ar[dd]_-{\pt_{u/i}}\ar[rr]^-{\pr^i}&&J\ar[dd]^-{u}\ar@{}[ddll]|-{\Nearrow\delta}\\
\\
\bbone\ar[rr]_-{i}&&I,
}
\]
where $\gamma_{(j,a)}:=a\colon u(j)\to i$ for each $(j,a)\in u/i$ and  $\delta_{(j,b)}:=b\colon i\to u(j)$ for each $(j,b)\in i/u$. We finally have all the ingredients to properly state the axiom (Der.$4$): 
\begin{enumerate}[(Der.$4$)]
\item the homotopy Kan extensions can be computed pointwise, that is, given $u\colon J\to I$ in $\cat$ and $i\in I$, the following natural transformations are invertible
\[
\xymatrix@C=18pt@R=8pt{
\D(u/i)\ar[dd]_-{(\pt_{u/i})_!}\ar@{<-}[rr]^-{\pr^*_i}&&\D(J)\ar[dd]^-{u_!}\ar@{}[ddll]|-{\Searrow\gamma_!}\\
\\
\D(\bbone)\ar@{<-}[rr]_-{i^*}&&\D(I)
}
\qquad\qquad\textrm{and}\qquad\qquad
\xymatrix@C=18pt@R=8pt{
\D(i/u)\ar[dd]_-{(\pt_{i/u})_*}\ar@{<-}[rr]^-{(\pr^i)^*}&&\D(J)\ar[dd]^-{u_*}\ar@{}[ddll]|-{\Nwarrow\delta_*}\\
\\
\D(\bbone)\ar@{<-}[rr]_-{i^*}&&\D(I).
}
\]
\end{enumerate}

Finally, let us note that $1$- and $2$-cells in the $2$-category of derivators are defined exactly as for prederivators. In other words, derivators form a full sub-$2$-category of the $2$-category of prederivators.

\begin{remark}\label{mates_rem_der4}
Let $\D$ be a prederivator that satisfies (Der.3), $u\colon J\to I$ in $\cat$ and $i\in I$. Consider the natural transformation $\gamma\colon u\circ\pr_i\Rightarrow i\circ\pt_{u/i}$ described in the above discussion. Now remember that, by \eqref{mates_and_derivators_eq}, we have that $(\gamma_!)_*=\gamma^*$. Expanding this equality we get:
\begin{align*}
\gamma^*=(\gamma_!)_*&=(\pt_{u/i}^*i^*\circledast \epsilon_u)\circ(\pt_{u/i}^*\circledast \gamma_!\circledast u^*)\circ(\eta_{u/i}\circledast \pr_i^*u^*)
\end{align*}
where $\epsilon_u\colon u_!u^*\Rightarrow \id_{\D(I)}$ and $\eta_{u/i}\colon \id_{\D(u/i)}\Rightarrow \pt_{u/i}^*(\pt_{u/i})_!$ are the counit and unit, respectively, of the corresponding adjunctions. A similar observation can be done regarding the natural transformation $\delta$.
\end{remark}

\begin{example}\label{expl.Yoneda_derivator}
The Yoneda prederivator (Example~\ref{expl.Yoneda_prederivator}) is a derivator if and only if $\C$ is a complete and cocomplete category.
\end{example}

\begin{example}\label{expl.shift_of_a_prederivator}
Given a derivator $\D$ and a small category $A$, we can define a prederivator $\D^A\colon\cat^\op\to\Cat$ by $\D^A(I):=\D(A\times I)$ (and in the obvious way on functors and natural transformations). Then $\D^A$ is in fact a derivator by~\cite[Theorem 1.25]{Moritz} and it is called the \emph{$A$-shift} of $\D$. Moreover, given a functor $u\colon I\to J$ in $\cat$, one obtains a morphism of derivators $u^*\colon\D^J\to\D^I$ (see~\cite[Example 2.1(2)]{Moritz}).
\end{example}

\begin{example}\label{coh_incoh_rest_ext_example}
Let $\D\colon \cat^{\op}\to \Cat$ be a derivator, $I\in \cat$  and $j\in I$. Then, by\mbox{\cite[Lem.\,11.1]{KN}}, the following triangle commutes up to a canonical isomorphism:
\begin{equation}\label{commutative_triangle_incoherent_extension}
\xymatrix@R=5pt{
\D(I)\ar[rrrr]^{\dia_I}&&&&\D(\bbone)^I\\
&{\Large \Nwarrow}\cong&\\
\D(\bbone)\ar@/_1.0pc/[uurrrr]_{-\otimes j}\ar[uu]^{j_!}
}
\end{equation}
where $-\otimes j$ is the left adjoint to the obvious ``evaluation at $j$" functor $(-)_{\restriction{j}}\colon \D(\bbone)^I\to \D(\bbone)$. Furthermore, given $X\in \D(\bbone)$, we have that $(X\otimes j)(k)\cong \coprod_{I(j,k)}X$, for all $k\in I$.
\end{example}

The following computations will be useful later on:

\begin{lemma}\label{easy_relations_for_u_and_k}
Let $\D\colon \cat^{\op}\to \Cat$ be a derivator, $I$ a small category, $J\subseteq I$ a (full) subcategory and $u\colon J\to I$ the inclusion functor. Then, the following assertions hold true:
\begin{enumerate}[\rm (1)]
\item letting $u_!\adjunct{\eta_u}{\epsilon_u}u^*\colon \D(J)\rightleftarrows \D(I)$ be the induced adjunction, the following natural transformation is invertible: $u^*\circledast\epsilon_u=(\eta_u\circledast u^*)^{-1}\colon u^*u_!u^*\tilde\Longrightarrow u^*$;
\item if $k\in J\subseteq I$, so that $k=u\circ k\colon \bbone\to I$, the following natural transformation is invertible: $k^*u^*\circledast\epsilon_u=k^*\circledast\epsilon_u\colon k^*u_!u^*\tilde\Longrightarrow k^*$.
\end{enumerate}
\end{lemma}
\begin{proof}
(1). By \cite[Prop.\,1.20]{Moritz}, as $u$ is fully faithful, also $u_!$ is fully faithful and, equivalently, $\eta_u$ is a natural isomorphism.  Then, by the triangle unit-counit identities \eqref{adj_tria_eq} we obtain that $u^*\circledast\epsilon_u=(\eta_u\circledast u^*)^{-1}\colon u^*u_!u^*\tilde\Longrightarrow u^*$ is a natural isomorphism.

\smallskip\noindent
(2) follows from (1) and the fact that $\D$ is a strict $2$-functor.
\end{proof}

\begin{remark} \label{rem.cosieve}
An important special case of a full subcategory is a \emph{cosieve}:
it is a full subcategory $J\subseteq I$ such that whenever $j\in J$ and $j\to k$ is a morphism in $I$, then $k\in J$. In that case, we also know what $k^*u_!\mathscr X$ looks like for $k\in I\setminus J$. Namely, $k^*u_!\mathscr X$ is, by~\cite[Prop.\ 1.23]{Moritz}, the initial object of $\D(\bbone)$ for every $\mathscr X$. Thus, in the case of pointed derivators (see Subsection~\ref{subsec.pointed-derivator} below), $u_!$ is called the \emph{left extension by zero functor}.
\end{remark}


\medskip\noindent
\textbf{Preservation of homotopy Kan extensions.} 
Given a morphism of derivators 
\[
F=(F_I,\gamma^F_u)\colon\D\longrightarrow\D'
\]
 and $u\colon J\to I$, we have the following natural transformations:
 \[
 (\gamma_u^F)_!\colon u_!F_J\Longrightarrow F_Iu_!\qquad\text{and}\qquad  (\gamma_u^F)_*\colon F_Iu_*\Longrightarrow u_*F_J.
 \]
 If either of these is a natural isomorphism, we say that $F$ preserves homotopy left or right Kan extensions along $u$, respectively. Recall from \cite[Def.\,3.15]{Moritz} that $F$ is \emph{left exact} if it preserves homotopy pullbacks and final objects and \emph{right exact} if it preserves homotopy pushouts and initial objects. 

\smallskip
The following statement is well-known in ordinary category theory. Here we extend it to arbitrary morphisms of derivators:

\begin{lemma} \label{lem.referee2.3}
A morphism of derivators which preserves directed homotopy colimits and finite
coproducts also preserves arbitrary coproducts.
\end{lemma}
\begin{proof}
Let $G\colon\D\to\D'$ be a morphism of derivators that preserves directed homotopy colimits and finite
coproducts, and let $I$ be a set, viewed as a discrete category. We consider the directed set $P:=\mathcal P^{<\omega}(I)$ of the finite parts of $I$, ordered by inclusion, and consider it as a posetal category. There is an obvious inclusion functor $u\colon  I\to P$, such that $i\mapsto\{i\}$. Furthermore,
\[
\left(\xymatrix@C=40pt{ I\ar[r]^-u&P\ar[r]^-{\pt_{P}} &\bbone}\right) = \left(\xymatrix{I\ar[r]^-{\pt_I}& \bbone}\right).
\]
By the compatibility of mates with pasting (see \cite[Lem.\,1.14]{Moritz}) to prove that $G$ commutes with coproducts indexed by $I$, that is, with homotopy left Kan extensions along $\pt_I$ (use (Der.1) for this equivalence), it is enough to verify that $G$ commutes, separately, with homotopy left Kan extensions along $u$ and along $\pt_{P}$, the latter being trivial as $P$ is directed. It remains to prove that $(\gamma_u^G)_!\colon u_!\circ G_I\Longrightarrow G_{P}\circ u_!$ is invertible. By (Der.2), $(\gamma_u^G)_!$ is an isomorphism if, and only if, $F^*((\gamma_u^G)_!)$ is an isomorphism for each $F\in P$. Fix then an $F\in P$ and consider the following natural isomorphisms
\[
F^*\circ  u_!\circ G_I\cong \hocolim_{u/F}\circ \pr_F^*\circ G_I\cong \hocolim_{u/F}\circ G_{u/F}\circ \pr_F^*,
\]
where the first isomorphism comes from the axiom (Der.4), while the second is induced by the natural isomorphism $\gamma^G_{\pr_F}$. Similarly we have the following two isomorphisms, the first one induced by $\gamma^G_{F}$ and the second one by (Der.4): 
\[
F^*\circ G_{P}\circ u_!=G_{\bbone}\circ F^*\circ u_!\cong G_{\bbone}\circ \hocolim_{u/F}\circ \pr_F^*.
\]
Hence, to prove that $F^*((\gamma_u^G)_!)$ is an isomorphism, it is enough to prove that 
\[
(\gamma_{\pt_{u/F}}^G)_!\colon \hocolim_{u/F}\circ G_{u/F}\Longrightarrow G_{\bbone}\circ \hocolim_{u/F}
\] 
is invertible. On the other hand, $u/F\cong F$ is a finite discrete category, so that $\hocolim_{u/F}$ is a finite coproduct, and $G$ commutes with finite coproducts by hypothesis.
%
\end{proof}

\medskip\noindent
\textbf{Homotopical epimorphism.} 
Following~\cite[Sec.\,6]{GS-trees}, we call a functor $v\colon J\to I$ between small categories a {\em homotopical epimorphism} if, for any derivator $\D$, the induced functor $v^*\colon \D(I)\to \D(J)$ is fully faithful. As this holds for any derivator, in particular it holds for all the shifts $\D^K$ (see Example~\ref{expl.shift_of_a_prederivator}) of a given derivator $\D$, showing that $v\colon J\to I$ is a  homotopical epimorphism if, and only if, for any derivator $\D$, it induces a fully faithful morphism of derivators $v^*\colon \D^I\to \D^J$. 

\begin{lemma}\label{homo_epi_implies_lax_epi_lem}
If $v\colon J\to I$ is a homotopical epimorphism between small categories, then it is also a lax epimorphism in the sense of \cite{{adamek2001functors}}, that is, for any category $\C$, the functor $(-)\circ v\colon \C^I\to \C^J$ is fully faithful. 
\end{lemma}
\begin{proof}
By the equivalence ``(1)$\Leftrightarrow$(2)'' of \cite[Thm.\,1.1]{adamek2001functors}, it is enough to check that $(-)\circ v\colon\Set^I\to \Set^J$ is fully faithful. This last condition follows by the definition of homotopical epimorphism applied to the derivator $\mathbb{Y}_\Set$ (see Example \ref{expl.Yoneda_derivator}).
\end{proof}

The following is a useful criterion to identify homotopical epimorphisms:

\begin{lemma}[{\cite[Prop.\,8.2]{GS2017}}]\label{criterion_homo_epi}
Let $u\colon A\to I$ be an essentially surjective functor in $\cat$, let $\D$ be a derivator, and let $\mathbb E\subseteq \D^A$ be a sub-prederivator of $\D^{A}$
(in the sense that $\mathbb{E}(J)$ is a subcategory of $\D^A(J)$ for each $J\in\cat$)
that satisfies the following two conditions:
\begin{enumerate}[\textup(1\textup)]
\renewcommand{\theenumi}{\textup{\arabic{enumi}}}
\item $\mathrm{Im}(u^*\colon \D^I\to \D^A)\subseteq \mathbb E$;
\item for any $J\in \cat$ and ${\mathscr X }\in \mathbb E(J)$, the component of the counit $\epsilon_{\mathscr X}\colon u^*u_*{\mathscr X}\to {\mathscr X}$ of the adjunction $u^*\dashv u_*$ is an isomorphism.
\end{enumerate}
Then, $u^*\colon \D^I\to \D^A$ is fully faithful and  $\mathrm{Im}(u^*)= \mathbb E$. In particular, $\mathbb E$ is a derivator. 
\end{lemma}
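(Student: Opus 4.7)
The plan is to verify fully faithfulness of $u^*$ as a morphism of derivators by showing that the unit $\eta\colon \id_{\D^I} \to u_*u^*$ of the adjunction $(u^*, u_*)$ is a natural isomorphism; the identification $\mathrm{Im}(u^*) = \E$ and the derivator structure on $\E$ will then follow formally. Fix $J \in \cat$ and $\mathscr Y \in \D^I(J) = \D(I \times J)$. By hypothesis (1), $u^*\mathscr Y$ lies in $\E(J)$, so hypothesis (2) gives that the counit $\varepsilon_{u^*\mathscr Y}\colon u^*u_*u^*\mathscr Y \to u^*\mathscr Y$ is an isomorphism, and the triangle identity $\varepsilon_{u^*\mathscr Y} \circ u^*(\eta_{\mathscr Y}) = \id_{u^*\mathscr Y}$ forces $u^*(\eta_{\mathscr Y})$ itself to be an isomorphism in $\D(A \times J)$.

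To upgrade this to $\eta_{\mathscr Y}$ being an isomorphism in $\D(I \times J)$, I would invoke (Der.2): it suffices to check that $(i,j)^*\eta_{\mathscr Y}$ is an isomorphism in $\D(\bbone)$ for every object $(i,j) \in I \times J$. By essential surjectivity of $u$, pick $a \in A$ with $u(a) \cong i$ in $I$. Since $\D$ is a strict $2$-functor and the functors $(i, j), (u(a), j) \colon \bbone \to I \times J$ are isomorphic, they induce naturally isomorphic restrictions, giving
\[
(i,j)^* \eta_{\mathscr Y} \cong (u(a), j)^*\eta_{\mathscr Y} = (a, j)^* u^*(\eta_{\mathscr Y}),
\]
which is an isomorphism by the previous step. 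Hence $\eta_{\mathscr Y}$ is an isomorphism, and $u^*$ is fully faithful.

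The remaining claims then follow almost automatically. Hypothesis (1) yields the inclusion $\mathrm{Im}(u^*) \subseteq \E$; for the converse, any $\mathscr X \in \E(J)$ is, by the counit isomorphism from (2), isomorphic to $u^*(u_*\mathscr X)$, so $\mathscr X \in \mathrm{Im}(u^*)$. Thus $u^*$ restricts to an equivalence of prederivators $\D^I \simeq \E$, through which $\E$ inherits the derivator structure (existence of Kan extensions and the axioms (Der.1)--(Der.4)) from the shifted derivator $\D^I$. The main obstacle in the whole argument is really the essential-surjectivity-plus-(Der.2) step, which depends on strict $2$-functoriality of $\D$ to translate isomorphisms between points of $I \times J$ into natural isomorphisms between restriction functors; all other steps are standard adjunction manipulations.
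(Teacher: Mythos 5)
The paper does not prove this lemma itself---it is quoted verbatim as \cite[Proposition 8.2]{GS2017}---so there is no internal proof to compare against. Your argument is a correct and essentially self-contained derivation of it: hypothesis (1) puts $u^*\mathscr Y$ in $\E(J)$, hypothesis (2) makes $\varepsilon_{u^*\mathscr Y}$ invertible, the triangle identity then forces $u^*(\eta_{\mathscr Y})=(u\times\id_J)^*(\eta_{\mathscr Y})$ to be invertible, and essential surjectivity of $u$ together with (Der.2) upgrades this to $\eta_{\mathscr Y}$ itself being invertible because $(i,j)^*$ is naturally isomorphic to $(u(a),j)^*=(a,j)^*(u\times\id_J)^*$ once one chooses $a$ with $u(a)\cong i$. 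The identity $\mathrm{Im}(u^*)=\E$ follows exactly as you say. One point worth stating explicitly in a polished write-up: the equivalence $\D^I\simeq\E$ you obtain is an equivalence of \emph{prederivators} (it is the corestriction of the morphism of prederivators $u^*$, so it commutes with all restriction functors $v^*$ up to coherent natural isomorphism); this is what lets the Kan extensions and axioms (Der.1)--(Der.4) transport from $\D^I$ to $\E$, rather than a mere levelwise equivalence of categories.
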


\subsection{Pointed and additive derivators} \label{subsec.pointed-derivator}
We refer to \cite{Moritz} for a detailed discussion, as well as for the precise definitions of {\em pointed} derivators (i.e., derivators $\D$ for which $\D(\bbone)$ has a zero object). Similarly, we refer to \cite{groth2012monoidal} for a discussion of {\em additive} derivators (i.e., derivators $\D$ for which $\D(\bbone)$ is additive). In fact, for a pointed (resp., additive) derivator, the condition imposed on $\D(\bbone)$ implies that the categories $\D(I)$ (for each $I$ in $\cat$) are automatically pointed (resp., additive). 

If $\D$ and $\D'$ are pointed (resp., additive) derivators, a morphism $F\colon\D\to\D'$ is called {\em pointed} (resp., {\em additive}) if  $F_I$ is a pointed (resp., additive) functor, for each $I$ in $\cat$. Note that, for each $u\colon J\to I$ in $\cat$, the three morphisms $u^*\colon \D^I\to \D^J$ and $u_!,\, u_*\colon \D^J\to \D^I$ are always pointed (resp., additive). 

\begin{definition}
Let $\D\colon \cat^{\op}\to \Cat$ be an additive derivator and $I\in \cat$. A sequence
\begin{equation}\label{pses_eq_def}
\xymatrix{
{\mathscr X}\ar[r]^\phi&{\mathscr Y}\ar[r]^\psi&{\mathscr Z}
}\qquad\text{in $\D(I)$,}
\end{equation}
is said to be {\em pointwise split-exact} if, for each $i\in I$, the sequence ${\mathscr X}_i\to {\mathscr Y}_i\to {\mathscr Z}_i$, obtained by applying $i^*$ to  \eqref{pses_eq_def}, is split-exact in the additive category $\D(\bbone)$.
\end{definition}

\begin{lemma}
Let $\mathbb{D}\colon\cat^{\op}\to\Cat$ be an additive derivator. For each $I\in\Cat$, the functor $\dia_I\colon\mathbb{D}(I)\to\mathbb{D}(\mathbf{1})^I$ takes pointwise split-exact sequences to kernel-cokernel sequences.
\end{lemma}
\begin{proof} 
Let $\mathscr{X}\stackrel{\phi}{\longrightarrow}\mathscr{Y}\stackrel{\psi}{\longrightarrow}\mathscr{Z}$ be a pointwise split-exact sequence in $\mathbb{D}(I)$. It follows that 
  $\mathscr{X}_i=(\text{dia}_I\mathscr{X})_i\stackrel{\phi_i}{\longrightarrow}\mathscr{Y}_i=(\text{dia}_I\mathscr{Y})_i\stackrel{\psi_i}{\longrightarrow}\mathscr{Z}_i=(\text{dia}_I\mathscr{Z})_i$ is a split-exact sequence, whence a kernel-cokernel one, in $\mathbb{D}(\mathbf{1})$,  for all $i\in I$. Now use the fact a morphism in $\mathbb{D}(\mathbf{1})^I$ has a co/kernel if, and only if, so do all of its components and, in this case, the co/kernel is computed componentwise.
 \end{proof}

\subsection{Strong and stable derivators}

We refer to \cite{Moritz} for a detailed discussion, as well as for the precise definitions of {\em stable} derivators (they are the pointed derivators in which a coherent commutative square is cartesian if, and only if, it is cocartesian), and of {\em strong} derivators (the ones in which the partial diagram functors $\D(\mathbf{2}\times I) \to \D(I)^\mathbf{2}$ are full and essentially surjective for each $I\in\cat$). 
The key fact, which can be found in~\cite[Thm.\,4.16 and Coro.\,4.19]{Moritz}, is that, given a strong and stable derivator $\D$, each $\D(I)$ is in fact a triangulated category.

Furthermore, if $\D$ and $\D'$ are stable derivators, left and right exactness of a morphism $F\colon\D\to\D'$ are equivalent and left (=right) exact morphisms are simply called \emph{exact} in this context. If $F\colon\D\to\D'$ is an exact morphism of strong and stable derivators, all $F_I\colon\D(I)\to\D'(I)$ are naturally triangulated functors by~\cite[Prop.\,4.18]{Moritz}. In particular, all the morphisms of the form $u^*$ and the Kan extensions $u_!$, $u_*$, for some $u\colon J\to I$ in $\cat$, are naturally triangulated functors. 

\begin{lemma}\label{split_induces_triangle_lemma}
Let $\D\colon \cat^{\op}\to \Cat$ be a strong and stable derivator, $I\in \cat$ and let
\[
\xymatrix{
{\mathscr X}\ar[r]^\phi&{\mathscr Y}\ar[r]^\psi&{\mathscr Z}
}
\]
be a pointwise split-exact sequence in $\D(I)$. Then, this sequence can be completed to a distinguished triangle in $\D(I)$. 
\end{lemma}
\begin{proof}
Consider the morphism $\phi\colon {\mathscr X}\to{\mathscr Y}$ and complete it to a triangle in $\D(I)$:
\[
\xymatrix{
{\mathscr X}\ar[r]^\phi&{\mathscr Y}\ar[r]^{\phi'}&{\mathscr C}\ar[r]^{\phi''}&\Sigma \mathscr X.
}
\]
Since $\psi\phi=0$ by hypothesis, there is a morphism $f\colon{\mathscr C}\to {\mathscr Z}$ such that $f\phi'=\psi$; let us verify that $f$ is an isomorphism. By (Der.2), it is enough to verify that $f_i:=i^*(f)$ is an isomorphism, for each $i$ in $I$. But this is clearly true since $\phi_i$ is a split monomorphism (by hypothesis) and both $\phi'_i$ and $\psi_i$ are cokernels of $\phi_i$ (the first by \cite[Lem.\,1.2.6]{Neeman} and the second by hypothesis). Hence, by the universality of cokernels, $f_i$ is the unique morphism such that $f_i\phi_i'=\psi_i$ and it is an isomorphism. 
\end{proof}

In the next example, we mention some classes of strong and stable derivators that will appear frequently in the rest of the paper:

\begin{example} \label{expl.derived categories}
Let $(\C,\mathcal W,\B,\mathcal F)$ be a model category. For any small category $I$, let $\W_I$ be the class of morphisms in $\C^I$ which belong pointwise to $\W$. By \cite[Thm.\,1]{Cis03}, the universal localization $\C^I[\W_I^{-1}]$ can always be constructed and, furthermore, the assignment $I\mapsto \C^{I}[\W_I^{-1}]$ underlies a derivator $\D_{(\C,\W)}\colon \cat^{\op}\to \Cat$. Furthermore, $\D_{(\C,\W)}$ is always strong and it is pointed (resp., stable) if $\C$ has the same property in the sense of model categories. For such derivators, homotopy co/limits and, more generally, homotopy Kan extensions, are just the total derived functors of the usual co/limit and Kan extension functors. 

Given a Grothendieck Abelian category $\G$, 
we refer to the strong and stable derivator arising as above from the injective model structure on $\Ch(\G)$ 
as the {\em canonical derivator enhancing the derived category} $\Der(\G)$. 
\end{example}

\section{Categories of finite and of countable length}

In this section we introduce the concept of category of finite length and that of category of countable length. Then, given such a category $I$, we study canonical presentations for objects in $\D(I)$ (first in the finite and then in the countable length case), where $\D$ is allowed to be an arbitrary additive derivator. Finally, we verify that every small category is the homotopically surjective image of a suitable category of countable length. In this way the results about standard presentations for coherent diagrams over categories of countable length can be applied to arbitrary shapes.
%
%

\subsection{Categories of finite length}

\begin{definition}
A small category $I$ is said to be of {\em finite length} if there is $n\in\N_{>0}$ and
\[
d\colon I\to \mathbf{n}^{\op},
\]
 a functor such that, for any pair of objects $i,\, j\in I$, if there is a non-identity morphism in $I(i,j)$, then $d(i)>d(j)$. If $I$ is a category of finite length, the smallest $n\in\N_{>0}$ for which there exists a functor $d\colon I\to \mathbf{n}^{\op}$ as above is called the {\em length} of $I$, in symbols $\ell(I)=n$. 
\end{definition}

Note that, for a category of finite length $I$, we have that $I(i,i)=\{\id_i\}$ for any $i\in I$. Furthermore, given $i\neq j\in I$, at most one of $I(i,j)$ and $I(j,i)$ is not empty. Moreover, the length of $I$ is exactly the length of a maximal path in $I$.

\begin{definition}
Given a category $I$ of finite length, we  say that an object $i$ in $I$ is {\em minimal} if there is no non-identity morphism starting at $i$. 
\end{definition}

Throughout this subsection, we fix the following notation:

\begin{notation}\label{notation_fl}
We let $\D\colon \cat^\op\to \Cat$ be an additive derivator, $I$ a category of finite length and $u\colon J\to I$ the inclusion of the (full) subcategory $J$ of $I$ of all the non-minimal objects. We also denote by $I\setminus J\subseteq I$ the subcategory of minimal objects. We denote by $u_!\adjunct{\eta_u}{\epsilon_u}u^*$ the induced adjunction. Furthermore, for each $i\in I\setminus J$ we have the following adjunctions:
\[
i_!\adjunct{\eta_i}{\epsilon_i}i^*\qquad\text{and}\qquad \hocolim_{u/i}\adjunct{\eta_{u/i}}{\epsilon_{u/i}}\pt_{u/i}^*.
\] 
Finally, we denote by $\gamma_!\colon (\pt_{u/i})_!\pr_i^*\Rightarrow i^*u_!$ the canonical natural transformation, which is invertible by (Der.4).
\end{notation}

The following lemma is a trivial fact about categories of finite length that will be extremely important when trying to prove things by induction:

\begin{lemma}
For each $i\in I\setminus J$, we have that $\ell(I)>\ell(J)\geq \ell(u/i)$.
\end{lemma}
\begin{proof}
The fact that $\ell(I)>\ell(J)$ follows since any maximal path in $I$ ends in a minimal object that, by definition, does not belong in $J$. Furthermore, a maximal path in $u/i$ is something of the form 
\[
(u(j_1)\overset{\phi_1}{\longrightarrow}i)\overset{\psi_1}{\longrightarrow}(u(j_2)\overset{\phi_2}{\longrightarrow}i)\overset{\psi_2}{\longrightarrow}\ldots\overset{\psi_{k-1}}{\longrightarrow}(u(j_k)\overset{\phi_k}{\longrightarrow}i)
\]
where $\psi_s\colon j_s\to j_{s+1}$ and $\phi_{s+1}\psi_s=\phi_s$, for $s=1,\ldots,k-1$. Then, 
$$j_1\overset{\psi_1}{\longrightarrow}j_2\overset{\psi_2}{\longrightarrow}\ldots \overset{\psi_{k-1}}{\longrightarrow}j_k$$
is a path in $J$, so that $\ell(u/i)\leq \ell(J)$.
\end{proof}

We can now introduce the standard presentation for coherent diagrams of finite length:

\begin{lemma}\label{lifting_inductive_step}
Given ${\mathscr X}$ in $\D(I)$, there is a pointwise split-exact sequence:
\[
\xymatrix@C=50pt@R=9pt{
\coprod_{I\setminus J}i_!i^*u_!u^*{\mathscr X}\ar[rr]^-{
\footnotesize
\left[\begin{smallarray}{ccc}
\ddots &&{\text{\Large $0$}}\\
&i_!i^*\epsilon_u&\\
{\text{\Large $0$}}&&\ddots\\
\hline
\cdots&-\epsilon_i&\cdots
\end{smallarray}\right]
}%
&& \coprod_{I\setminus J}i_!{\mathscr X}_i       \sqcup    u_!u^*{\mathscr X}   \ar[rr]^-{
\footnotesize
\left[\begin{array}{ccc|c}
\cdots &\epsilon_i&\cdots&\epsilon_u
\end{array}\right]
}&&{\mathscr X}.
}
\]
\end{lemma}
\begin{proof}
We have to verify that, for each $k\in I$, when we apply the evaluation functor $k^*$ to the sequence in the statement, we get a split-exact sequence in $\D(\bbone)$. We distinguish two cases based on whether $k$ is minimal or not.
 Indeed, if $k\in J$ then $k^*i_!=0$ for all $i\in I\setminus J$ since then $\{i\}\subseteq I$ is a cosieve (cf.\ Remark~\ref{rem.cosieve}), and $k^*\epsilon_u$ is an isomorphism by Lemma~\ref{easy_relations_for_u_and_k}, so we get the following split-exact sequence:
\[
\xymatrix@C=30pt@R=9pt{
0\ar[rrr]^-{0}&&& k^*u_!u^*{\mathscr X}\ar[rrr]^-{k^*\epsilon_u}&&& k^*{\mathscr X}.
}
\]
Suppose now that $k\in I\setminus J$; using the natural isomorphism $k^*\epsilon_k\colon k^*k_!k^*\ \tilde\Longrightarrow \ k^*$ (see Lemma \ref{easy_relations_for_u_and_k} recalling that the functor $k\colon \bbone \to I$ is fully faithful), we get the following split-exact sequence:
\[
\xymatrix@C=29pt@R=9pt{
k^*k_!k^*u_!u^*{\mathscr X}\ar[rrr]^-{
\footnotesize
\left[\begin{array}{c}
k^*k_!k^*\epsilon_u\\
\\
-k^*\epsilon_k
\end{array}\right]
}&&&k^*k_!k^*{\mathscr X}\sqcup  k^*u_!u^*{\mathscr X}\ar[rrr]^-{
\footnotesize
\left[\begin{array}{cc}
k^*\epsilon_k &k^*\epsilon_u
\end{array}\right]
}
&&& k^*{\mathscr X},
}
\]
as this is an occurrence of the second part of Example \ref{countable_splitexact_ex}(1).
\end{proof}

The following two technical propositions will be essential in the following section. Note that, even if we state them in the setting of this subsection, they hold for any small category $I$, any full subcategory $J$ and any $i\in I\setminus J$.

\begin{proposition}\label{morph_fin_length_prop_1}
Given ${\mathscr X},{\mathscr Y}\in\D(I)$ and $i\in I\setminus J$, the following diagram commutes:
\[
\xymatrix@C=150pt@R=21pt{
\D(I)(i_!i^*{\mathscr X},{\mathscr Y})\ar[r]^-{(-)\circ i_!i^*(\epsilon_u)}\ar[ddd]^-{\rotatebox[origin=c]{90}{$\sim$}}_-{i^*(-)\circ\eta_i}&\D(I)(i_!i^*u_!u^*{\mathscr X},{\mathscr Y})\ar[d]_-{\rotatebox[origin=c]{90}{$\sim$}}^-{i^*(-)\circ\eta_i}\\
&\D(\bbone)(i^*u_!u^*{\mathscr X},i^*{\mathscr Y})\ar[d]_-{\rotatebox[origin=c]{90}{$\sim$}}^-{(-)\circ\gamma_!}\\
&\D(\bbone)(\hocolim_{u/i}\pr_i^*u^*{\mathscr X},i^*{\mathscr Y})\ar[d]_-{\rotatebox[origin=c]{90}{$\sim$}}^-{\pt_{u/i}^*(-)\circ\eta_{u/i}}\\
\D(\bbone)(\mathscr X_i,\mathscr Y_i)\ar[r]_{\pt^*_{u/i}(-)\circ\gamma^*}&\D(u/i)(\pr_i^*u^*(\mathscr X),\pt_{u/i}^*\mathscr Y_i)
}
\]
where the columns are natural isomorphisms. Furthermore, for each $f\colon \mathscr X_i\to \mathscr Y_i$ the corresponding map $\pt^*_{u/i}(f)\circ\gamma^*\colon \pr_i^*u^*(\mathscr X)\to \pt_{u/i}^*\mathscr Y_i$ acts as follows: for each object $(j,a\colon j\to i)$ in $u/i$,
\[
(j,a)^*(\pt^*_{u/i}(f)\circ\gamma^*)=f\circ \mathscr X_a\colon \mathscr X_j\to \mathscr Y_i.
\]
\end{proposition}
\begin{proof}
The left column and the first morphism in the right column are induced by the adjunction $i_!\dashv i^*$ so they are clearly isomorphisms. The second and the third morphism in the right column are induced, respectively, by the natural transformation $\gamma_!$, which is invertible by (Der.4), and by the adjunction $\hocolim_{u/i}\dashv\pt_{u/i}^*$, so it is invertible too. Finally, to see that the square commutes, consider the following equalities:
\begin{align*}
\pt^{*}_{u/i}(i^*(-)\circ\eta_i)\circ \gamma^*&=\pt^*_{u/i}(i^*(-)\circ\eta_i\circ i^*(\epsilon_u)\circ\gamma_!)\circ \eta_{u/i}\\
&=\pt^*_{u/i}(i^*(-)\circ i^*i_!i^*(\epsilon_u)\circ\eta_i\circ\gamma_!)\circ \eta_{u/i},
\end{align*}
where the first equality holds since $\gamma^*=(\gamma_!)_*$ (see Remark \ref{mates_rem_der4}) and the second one by the naturality of $\eta_i\colon \id_{\D(\bbone)}\Rightarrow i^*i_!$.
\end{proof}

\begin{proposition}\label{morph_fin_length_prop_2}
Given ${\mathscr X},{\mathscr Y}\in\D(I)$ and $i\in I\setminus J$, the following diagram commutes 
\[
\xymatrix@C=150pt@R=21pt{
\D(I)(u_!u^*({\mathscr X}),{\mathscr Y})\ar[r]^-{(-)\circ\epsilon_i}\ar[dr]_-{i^*(-)}\ar[ddd]^-{\rotatebox[origin=c]{90}{$\sim$}}_-{u^*(-)\circ\eta_u}&\D(I)(i_!i^*u_!u^*{\mathscr X},{\mathscr Y})\ar[d]_-{\rotatebox[origin=c]{90}{$\sim$}}^-{i^*(-)\circ\eta_i}\\
&\D(\bbone)(i^*u_!u^*{\mathscr X},i^*{\mathscr Y})\ar[d]_-{\rotatebox[origin=c]{90}{$\sim$}}^-{(-)\circ\gamma_!}\\
&\D(\bbone)(\hocolim_{u/i}\pr_i^*u^*{\mathscr X},i^*{\mathscr Y})\ar[d]_-{\rotatebox[origin=c]{90}{$\sim$}}^-{\pt_{u/i}^*(-)\circ\eta_{u/i}}\\
\D(J)(u^*(\mathscr X),u^*(\mathscr Y))\ar[r]_{\gamma^*\circ \pr^*_{i}(-)}&\D(u/i)(\pr_i^*u^*(\mathscr X),\pt_{u/i}^*\mathscr Y_i)
}
\]
where the columns are natural isomorphisms. Furthermore, for each $f\colon u^*\mathscr X\to u^*\mathscr Y$ the corresponding map $\gamma^*\circ \pr^*_{i}(f)\colon \pr_i^*u^*(\mathscr X)\to \pt_{u/i}^*\mathscr Y_i$ acts as follows: for each object $(j,a\colon j\to i)$ in $u/i$,
\[
(j,a)^*(\gamma^*\circ \pr^*_{i}(f))=\mathscr Y_a\circ f_j\colon \mathscr X_j\to \mathscr Y_i.
\]
\end{proposition}
\begin{proof}
The three morphisms in the right column are the same as the ones in Proposition~\ref{morph_fin_length_prop_1}, so they are isomorphisms. Furthermore, the upper triangle commutes by the usual triangular equalities relative to the adjunction $i_!\dashv i^*$. On the other hand, the morphism in the left column is the natural isomorphism induced by the adjunction $u_!\dashv u^*$, whose inverse is the map $\epsilon_u\circ u_!(-)\colon \D(J)(u^*(\mathscr X),u^*(\mathscr Y))\to \D(I)(u_!u^*({\mathscr X}),{\mathscr Y})$. Hence, the diagram in the statement commutes if, and only if, the following diagram commutes:
\[
\xymatrix@C=150pt@R=22pt{
\D(I)(u_!u^*({\mathscr X}),{\mathscr Y})\ar[r]^-{i^*(-)}\ar@{<-}[dd]^-{\rotatebox[origin=c]{90}{$\sim$}}_-{\epsilon_u\circ u_!(-)}
&\D(\bbone)(i^*u_!u^*{\mathscr X},i^*{\mathscr Y})\ar[d]_-{\rotatebox[origin=c]{90}{$\sim$}}^-{(-)\circ\gamma_!}\\
&\D(\bbone)(\hocolim_{u/i}\pr_i^*u^*{\mathscr X},i^*{\mathscr Y})\ar[d]_-{\rotatebox[origin=c]{90}{$\sim$}}^-{\pt_{u/i}^*(-)\circ\eta_{u/i}}\\
\D(J)(u^*(\mathscr X),u^*(\mathscr Y))\ar[r]_{\gamma^*\circ \pr^*_{i}(-)}&\D(u/i)(\pr_i^*u^*(\mathscr X),\pt_{u/i}^*\mathscr Y_i).
}
\]
Finally, the above diagram commutes by the following series of equalities:
\begin{align*}
&\pt^*_{u/i}(i^*(\epsilon_u\circ u_!(-))\circ\gamma_!)\circ \eta_{u/i}=\\
&=\pt^*_{u/i}(i^*(\epsilon_u)\circ i^*u_!(-)\circ\gamma_!)\circ \eta_{u/i}\\
&=\pt^*_{u/i}(i^*(\epsilon_u)\circ \gamma_!\circ(\pt_{u/i})_!\pr_i(-))\circ \eta_{u/i}&\text{by naturality of $\gamma_!$;}\\
&=\pt^*_{u/i}(i^*(\epsilon_u))\circ \pt^*_{u/i}(\gamma_!)\circ\pt^*_{u/i}(\pt_{u/i})_!\pr_i(-)\circ \eta_{u/i}\\
&=\pt^*_{u/i}(i^*(\epsilon_u))\circ \pt^*_{u/i}(\gamma_!)\circ\eta_{u/i}\circ \pr_i(-)&\text{by naturality of $\eta_{u/i}$;}\\
&=\gamma^*\circ\pr^*_i(-)&\text{since $(\gamma_!)_*=\gamma^*$.}&\qedhere
\end{align*}
\end{proof}

\subsection{Categories of countable length}

\begin{definition}\label{countable_length}
A small category $I$ is said to be of {\em countable length} if there is a functor
\[
d\colon I\to \N^{\op}
\]
such that, for any pair of objects $i,\, j\in I$, if there is a non-identity morphism in $I(i,j)$, then $d(i)>d(j)$ in~$\N$. 
\end{definition}

As for the finite length case, if $I$ has countable length, then $I(i,i)=\{\id_i\}$ for any $i\in I$. Furthermore, given $i\neq j\in I$, at most one of $I(i,j)$ and $I(j,i)$ is not empty. Throughout this subsection, we fix the following notation:

\begin{notation}\label{notation_countable_cat}
We let $\D\colon \cat^\op\to \Cat$ be an additive derivator and $I$ a category of countable length with a fixed functor $d\colon I\to \N^{\op}$ like in Definition \ref{countable_length}. For each $n\in\N$, we let 
\[
I_{\leq n}\subseteq I\quad \text{be the (full) subcategory of all }\{ i\in I: d(i)\leq n-1\}.
\] 
Furthermore, we let $\iota_{m,n}\colon I_{\leq n}\to I_{\leq m}$ and $\iota_{n}\colon I_{\leq n}\to I$ be the canonical cosieve inclusions, for all $m\geq n$ in $\N$. Finally, we consider the adjunctions:
\[
(\iota_n)_!\adjunct{\eta_n}{\epsilon_n}\iota_n^*\qquad\text{and}\qquad(\iota_{m,n})_!\adjunct{\eta_{m,n}}{\epsilon_{m,n}}\iota_{m,n}^*
\]
and we define
$\partial^n:=(\epsilon_n\circledast (\iota_{n+1})_!\iota_{n+1}^*)\circ ((\iota_n)_!\iota_{n+1,n}^*\circledast \eta_{n+1}\circledast \iota_{n+1}^*)\colon (\iota_n)_!\iota_n^*\Rightarrow(\iota_{n+1})_!\iota_{n+1}^*$.
\end{notation}

Let us remark that, in the above notation, the restriction $d_n\colon I_{\leq n}\to \mathbf{n}^{\op}$ of the functor $d\colon I\to \N^{\op}$ shows that each $I_{\leq n}$ is a category of finite length. Furthermore, for each $m\geq n\in \N$, we have $\iota_m\circ \iota_{m,n}=\iota_n$.
In particular, the equality $\iota_{n+1,n}^*\circ \iota_{n+1}^*=\iota_n^*$ and Lemma~\ref{lem.adjoints} provide us with a natural isomorphism $\psi_n\colon (\iota_n)_! \stackrel{\sim}\Longrightarrow(\iota_{n+1})_!(\iota_{n+1,n})_!$ given by
\begin{equation}\label{eq.composition-of-left-adjoints}
\psi_n = (\epsilon_n\circledast(\iota_{n+1})_!(\iota_{n+1,n})_!) \circ
((\iota_n)_!\iota_{n+1,n}^*\circledast\eta_{n+1}\circledast(\iota_{n+1,n})_!) \circ
((\iota_n)_!\circledast\eta_{n+1,n}).
\end{equation}
Let now $\mathscr X\in \D(I)$ and $n\in \N$, the following commutative diagram helps visualize $\partial^n$:
\[
\xymatrix@C=120pt@R=5pt{
(\iota_n)_!\iota_{n+1,n}^*\iota_{n+1}^*{\mathscr X}\ar[r]^-{(\iota_n)_!\iota_{n+1,n}^*(\eta_{n+1})}\ar@{=}[d]&(\iota_n)_!\iota_{n+1,n}^*  \iota_{n+1}^*(  \iota_{n+1})_!  \iota_{n+1}^*{\mathscr X}\ar@{=}[d]\\
(\iota_n)_!\iota_n^*{\mathscr X}\ar[rddd]_-{\partial^n}&(\iota_n)_!\iota_{n}^*(  \iota_{n+1})_!  \iota_{n+1}^*{\mathscr X}\ar[ddd]^-{\epsilon_n}\\
\\
\\
&(\iota_{n+1})_!  \iota_{n+1}^*{\mathscr X}.}
\]

It is also instructive to recall Lemma~\ref{easy_relations_for_u_and_k} and Remark~\ref{rem.cosieve} in the context
of the above definitions. For example, given $\mathscr X\in\D(I)$, the component of $((\iota_n)_!\iota_n^*\mathscr X)_k$ is isomorphic to $\mathscr X_k$ if $k\in I_{\le n}$ and $((\iota_n)_!\iota_n^*\mathscr X)_k=0$ otherwise. The components of the counit $(\iota_n)_!\iota_n^*\mathscr X\to\mathscr X$ are isomorphisms or maps from the zero object of $\D(\bbone)$, respectively. The next lemma shows that the map $\partial^n$ is of a similar nature (in fact, a direct but tedious computation also reveals that $\partial^n$ coincides, up to the identification given by $\psi_n$ from \eqref{eq.composition-of-left-adjoints}, with $(\iota_{n+1})_!\circledast\epsilon_{n+1,n}\circledast\iota_{n+1}^*\colon (\iota_{n+1})_!(\iota_{n+1,n})_!\iota_n^*\Longrightarrow(\iota_{n+1})_!\iota_{n+1}^*$).

\begin{lemma}\label{ed=e_lemma}
In the above notation, $\epsilon_{n+1}\circ \partial^n=\epsilon_n$, for each $n\in\N$.
\end{lemma}
\begin{proof}
Let $n\in \N$ and note that, by the naturality of $\epsilon_n$, we have:
\[
\epsilon_{n+1}\circ (\epsilon_n\circledast (\iota_{n+1})_!\iota_{n+1}^*)=\epsilon_{n}\circ ( (\iota_{n})_!\iota_{n}^*\circledast\epsilon_{n+1}).
\]
In other words, the following diagram commutes for each $\mathscr X\in \D(I)$:
\begin{equation}\label{nat_of_en_diagram}
\xymatrix@C=70pt{
(\iota_n)_!\iota_n^*{\mathscr X}\ar@{<-}[r]^-{(\iota_n)_!\iota_n^*(\epsilon_{n+1})}\ar[d]_{\epsilon_n}&(\iota_n)_!\iota_n^*(\iota_{n+1})_!\iota_{n+1}^*{\mathscr X}\ar[d]^{\epsilon_n}\\
{\mathscr X}\ar@{<-}[r]_-{\epsilon_{n+1}}&(\iota_{n+1})_!\iota_{n+1}^*{\mathscr X}.
}
\end{equation}
Therefore,  we have that 
\begin{align*}
\epsilon_{n+1}\circ \partial^n&=\epsilon_{n+1}\circ (\epsilon_n\circledast (\iota_{n+1})_!\iota_{n+1}^*)\circ ((\iota_n)_!\iota_{n+1,n}^*\circledast \eta_{n+1}\circledast \iota_{n+1}^*)&\text{}\\
&=\epsilon_{n}\circ ( (\iota_{n})_!\iota_{n}^*\circledast\epsilon_{n+1})\circ ((\iota_n)_!\iota_{n+1,n}^*\circledast \eta_{n+1}\circledast \iota_{n+1}^*)&\text{}\\
&=\epsilon_n\circ ((\iota_n)_!\iota_{n+1,n}^*\circledast ((\iota_{n+1}^*\circledast\epsilon_{n+1})\circ (\eta_{n+1}\circledast \iota_{n+1}^*)))=\epsilon_n,
\end{align*}
where we have applied the triangular equality $(\iota_{n+1}^*\circledast\epsilon_{n+1})\circ (\eta_{n+1}\circledast \iota_{n+1}^*)=\id_{\iota_{n+1}^*}$ relative to the adjunction $(\iota_{n+1})_!\dashv \iota_{n+1}^*$.
\end{proof}

We are now ready to introduce the standard presentation of a coherent diagram of countable length:

\begin{lemma}\label{dist_triangle_countable_length}
For any ${\mathscr X}\in \D(I)$, there is a pointwise split-exact sequence
\[
\xymatrix@C=130pt{
\coprod_{\N}(\iota_n)_!\iota_n^*{\mathscr X}\ar[r]^{
\footnotesize
\alpha=
\left[\begin{smallarray}{cccc}
1&&&\\
-\partial^0&1&&{\text{\Large $0$}}\\
&-\partial^1&1&\\
{\text{\Large $0$}}&&\ddots&\ddots
\end{smallarray}\right]
}&\coprod_{\N}(\iota_n)_!\iota_n^*{\mathscr X}\ar[r]^-{
\footnotesize
\beta=
\left[\begin{smallarray}{cccc}
\epsilon_0 &\epsilon_1 &\epsilon_2 &\cdots
\end{smallarray}\right]
}&{\mathscr X}.
}
\] 
%
\end{lemma}
\begin{proof}
It is easily seen that, by Lemma \ref{ed=e_lemma}, $\beta\circ\alpha=0$. Let now $k\in I_{\leq n}\setminus I_{\leq n-1}\subseteq I$ (for some integer $n\geq1$) and let us show that, after applying $k^*$ to the sequence in the statement, we get a split-exact sequence in $\D(\bbone)$. By \cite[Prop.\,3.6]{Moritz} we get that
\[
k^*(\iota_m)_!\iota_m^*=0,\qquad \text{whenever $m<n$},
\]
while $k^*(\epsilon_m)$ is invertible for all $m\geq n$, by Lemma \ref{easy_relations_for_u_and_k}, and $k^*(\epsilon_{m+1})\circ k^*( \partial^m)=k^*(\epsilon_m)$, by Lemma \ref{ed=e_lemma}. In particular, we obtain the following commutative diagram in $\D(\bbone)$, where the three columns are isomorphisms
\[
\xymatrix@C=100pt{
\coprod_{m\geq n}k^*(\iota_m)_!\iota_m^*{\mathscr X}\ar[r]^{\scriptsize k^*(\alpha)}\ar[d]^-{\rotatebox[origin=c]{90}{$\sim$}}_{(k^*(\epsilon_m))_{m}}&\coprod_{m\geq n}k^*(\iota_m)_!\iota_m^*{\mathscr X}\ar[r]^{\scriptsize k^*(\beta)}\ar[d]_-{\rotatebox[origin=c]{90}{$\sim$}}^{(k^*(\epsilon_m))_{m}}&{\mathscr X}_k\ar@{=}[d]\\
{\mathscr X}_k^{(\N)}\ar[r]|-{
\footnotesize
\left[\begin{smallarray}{cccc}
1&&&\\
-1&1&&{\text{\Large $0$}}\\
&-1&1&\\
{\text{\Large $0$}}&&\ddots&\ddots
\end{smallarray}\right]
}&{\mathscr X}_k^{(\N)}\ar[r]_{
\footnotesize
\left[\begin{smallarray}{cccc}
1&1&1&\cdots
\end{smallarray}\right]
}&{\mathscr X}_k
}
\]
showing that our sequence is (isomorphic to) the one in Example \ref{countable_splitexact_ex}.
\end{proof}

The following technical proposition will be essential in the following section:

\begin{proposition}\label{morph_count_length_prop}
Given ${\mathscr X},{\mathscr Y}\in\D(I)$ and $n\in \N$, the following diagram commutes 
\[
\xymatrix@C=100pt@R=30pt{
\D(I)((\iota_n)_!\iota_n^*({\mathscr X}),{\mathscr Y})\ar@{<-}[r]^-{(-)\circ\partial^n}\ar[d]^-{\rotatebox[origin=c]{90}{$\sim$}}_-{\iota_n^*(-)\circ\eta_n}&\D(I)((\iota_{n+1})_!\iota_{n+1}^*({\mathscr X}),{\mathscr Y})\ar[d]_-{\rotatebox[origin=c]{90}{$\sim$}}^-{\iota_{n+1}^*(-)\circ\eta_{n+1}}\\
\D(I_{\leq n})(\iota_n^*({\mathscr X}),\iota_n^*({\mathscr Y}))\ar@{<-}[r]^{\iota_{n+1,n}^*(-)}&\D(I_{\leq n+1})(\iota_{n+1}^*({\mathscr X}),\iota_{n+1}^*({\mathscr Y}))}
\]
where the columns are natural isomorphisms.
\end{proposition}
\begin{proof}
The two columns are isomorphisms because they are the maps given by the adjunctions $(\iota_n)_!\dashv \iota_n^*$ and $(\iota_{n+1})_!\dashv \iota_{n+1}^*$, respectively. In particular, the inverse of the map in the leftmost column is $\epsilon_{n}\circ(\iota_n)_!(-)$ and, therefore, the commutativity of the original square is equivalent to the commutativity of the following diagram:
\[
\xymatrix@C=100pt@R=30pt{
\D(I)((\iota_n)_!\iota_n^*({\mathscr X}),{\mathscr Y})\ar@{<-}[r]^-{(-)\circ\partial^n}\ar@{<-}[d]^-{\rotatebox[origin=c]{90}{$\sim$}}_-{\epsilon_{n}\circ(\iota_n)_!(-)}&\D(I)((\iota_{n+1})_!\iota_{n+1}^*{\mathscr X},{\mathscr Y})\ar[d]_-{\rotatebox[origin=c]{90}{$\sim$}}^-{\iota_{n+1}^*(-)\circ\eta_{n+1}}\\
\D(I_{\leq n})(\iota_n^*({\mathscr X}),\iota_n^*({\mathscr Y}))\ar@{<-}[r]^{\iota_{n+1,n}^*(-)}&\D(I_{\leq n+1})(\iota_{n+1}^*({\mathscr X}),\iota_{n+1}^*({\mathscr Y})).}
\]
That is, we have to prove that $\epsilon_{n}\circ(\iota_n)_!(\iota_{n+1,n}^*(\iota_{n+1}^*(-)\circ\eta_{n+1}))=(-)\circ\partial^n$. But, in fact,
\begin{align*}
\epsilon_{n}\circ(\iota_n)_!(\iota_{n+1,n}^*(\iota_{n+1}^*(-)\circ\eta_{n+1}))&=\epsilon_{n}\circ(\iota_n)_!\iota_{n}^*(-)\circ(\iota_n)_!\iota_{n+1,n}^*(\eta_{n+1})\\
&=(-)\circ \epsilon_{n}\circ(\iota_n)_!\iota_{n+1,n}^*(\eta_{n+1})=(-)\circ\partial^n,
\end{align*}
where the first equality holds since $\iota_{n+1}\circ\iota_{n+1,n}=\iota_n$, the second one is true by the naturality of $\epsilon_n$ (analogously to the commutativity of the diagram \eqref{nat_of_en_diagram} but with an arbitrary map instead of $\epsilon_{n+1}$), and the last one follows just by definition of $\partial^n$.
\end{proof}

\subsection{Homotopical epimorphic images of categories of countable length}\label{homotopical_epi_subs}

Let $I$ be a small category. In this section we associate with $I$ a new category of countable length $\Delta(I)$ and a homotopical epimorphism $u\colon \Delta(I) \to I$. The idea of using the category $\Delta(I)$ in this way was suggested to us by Fritz H{\"o}rmann. A similar idea appears in his paper \cite{hoermann-enlargement}, where he proves much more in this context---he uses the categories $\Delta(I)$ to extend the domain of definition of fibred (multi)derivators from directed categories of countable length to arbitrary shapes. Let us start with the following definition:

\begin{definition}
Given a small category $I$, the category $\Delta(I)$ is defined as follows:
\begin{itemize}
\item the objects of $\Delta(I)$ are the functors $\mathbf{n}\to I$, for $n\in\N_{>0}$;
\item given two objects $a\colon \mathbf{n}\to I$ and $b\colon \mathbf{m}\to I$, we define
\[
\Delta(I)(a,b):=\{\text{$\varphi\colon{\mathbf{m}\to\mathbf{n}}$ strictly increasing and s.t. $a\circ\varphi=b$}\}.
\]
\end{itemize}
\end{definition}
There is a canonical functor
\[
u\colon\Delta(I)\longrightarrow I\qquad\text{such that}\qquad (a\colon \mathbf{n}\to I)\mapsto a(0).
\]
In what follows, our aim is to prove that the above functor $u\colon \Delta(I)\to I$ is a homotopical epimorphism. Given $i\in I$, define the {\em fiber} of $u$ at $i$ to be the category $u^{-1}(i)$, given by the following pullback diagram in $\cat$:
\begin{equation}\label{pb_defining_fibers}
\xymatrix{
u^{-1}(i)\ar@{.>}[r]^{\iota_i}\ar@{}[dr]|{P.B.}\ar@{.>}[d]_{\pt_{u^{-1}(i)}}&\Delta(I)\ar[d]^u\\
\bbone\ar[r]_{i}&I
}
\end{equation}
We can describe $u^{-1}(i)$ as the subcategory of the slice category $i/u$ of those objects $(a,\varphi\colon i\to u(a))$ such that $u(a)=i$ and $\varphi=\id_i$. The following lemma is taken from the discussion in~\cite[Appendix A]{GPS-additivity}.

\begin{lemma}\label{basic_properties_delta_construction}
In the above notation, the following statements hold true:
\begin{enumerate}[\textup(1\textup)]
\renewcommand{\theenumi}{\textup{\arabic{enumi}}}
\item $\Delta(I)$ has countable length;
\item the inclusion $u^{-1}(i)\to i/u$ has a right adjoint;
\item $u^{-1}(i)$ has a terminal object.
\end{enumerate}
\end{lemma}
\begin{proof}
(1). We define a functor $d\colon \Delta(I)\to \N^{\op}$
as follows: $d(\mathbf{n}\to I):=n-1$ and, given a morphism $(\mathbf{n}\to I)\overset{\varphi}{\longrightarrow} (\mathbf{m}\to I)$, we have that $n\geq m$, so we can map $\varphi$ to the unique map $(n-1)\to (m-1)$ in $\N^{\op}$. It is now easy to verify that this functor satisfies the conditions of Definition \ref{countable_length}.

\smallskip\noindent
(2). Let $x=(a_0\to a_1\to \ldots\to a_n, \varphi\colon i\to a_0)\in i/u$ and suppose that $\varphi\ne\id_x$ (i.e., that $x\not\in u^{-1}(i)$). Let $c(x)$ be the following object of $u^{-1}(i)$:
\[
c(x)=(i\overset{\varphi}{\longrightarrow}a_0\to a_1\to \ldots\to a_n, \id_i\colon i\to i).
\]
Of course the morphism $\mathbf{n+1}\to\mathbf{n+2}$ such that $k\mapsto k+1$ gives a well-defined morphism $c(x)\to x$ in $i/u$. It is easy to verify that this morphism is a coreflection of $x$ onto $u^{-1}(i)$.

\smallskip\noindent
(3). One checks directly that $(\bbone \overset{i}{\longrightarrow}I, \id_i\colon i\to i)$ is a terminal object in $u^{-1}(i)$.
\end{proof}

Before introducing the following definition, let us recall that an object $\mathscr X\in \D(J)$ is said to be constant if, and only if, $\mathscr X\cong \pt_J^*\mathscr X_j$ for some (i.e., all) $j\in J$. Note also that, if $J$ has a terminal object $t$, then there is a unique natural transformation $\alpha\colon \id_J\Rightarrow t\circ\pt_J$ and $\mathscr X$ is constant if, and only if, $\alpha^*_{\mathscr X}\colon \mathscr X\to \pt_J^*\mathscr X_t$ is an isomorphism. By (Der.2) this is equivalent to ask that $j^*(\alpha^*_{\mathscr X})$ is an isomorphism for each $j\in J$. Furthermore,  $j^*(\alpha^*_{\mathscr X})={\mathscr X}_a\colon \mathscr X_j\to \mathscr X_t$, where $a\colon j\to t$ is the unique morphism in $J(j,t)$. As a consequence, whenever $J$ has a terminal object, a coherent diagram $\mathscr X\in \D(J)$ is constant if, and only if, $\mathscr X_a\colon \mathscr X_{j}\to \mathscr X_k$ is an isomorphism for each morphism $a\colon j\to k$ in $J$. This observation is important because it reduces the property of being constant for the coherent diagram $\mathscr X$, to a condition that we can check on the incoherent diagram $\dia_J(\mathscr X)$ (that is, that all its transitions maps are invertible).

\begin{definition}\label{constant_fiber_def}
Let $\D$ be a derivator and $\mathscr X\in \D(\Delta(I))$. We say that $\mathscr X$ is {\em constant on fibers} if, for any $i\in I$, $\iota_i^*{\mathscr X}$ is a constant diagram in $\D(u^{-1}(i))$ (where $\iota_i\colon u^{-1}(i)\to \Delta(I)$ is the functor introduced in \eqref{pb_defining_fibers}).
\end{definition}

We are now ready to state and prove the main result of this subsection:

\begin{proposition}\label{homotopical_epi}
In the above notation, $u\colon \Delta(I)\to I$ is a homotopical epimorphism. Furthermore, given a derivator $\D$, the essential image of $u^*\colon \D^I\to \D^{\Delta(I)}$ is the sub-derivator $\mathbb E\subseteq \D^{\Delta(I)}$, where $\mathbb E(J)$ is spanned by those ${\mathscr X}\in \D^{\Delta(I)}(J)$ that are constant on fibers, for each $J\in \cat$. 
\end{proposition}
\begin{proof}
Fix a derivator $\D$ and let us verify that the sub-prederivator $\mathbb E\subseteq \D^{\Delta (I)}$ defined in the statement satisfies the conditions of Lemma \ref{criterion_homo_epi}. Indeed, the first condition of the lemma can be verified as follows: let $J\in \cat$ and ${\mathscr X}\in \D^I(J)$, let us prove that $u^*{\mathscr X}\in \mathbb E(J)(\subseteq \D^{\Delta (I)}(J))$. Indeed, choose $i\in I$ and $a\in u^{-1}(i)$. 
Applying our derivator to the pullback diagram in \eqref{pb_defining_fibers}, we get the following commutative square
\[
\xymatrix@C=40pt{
\D({\Delta (I)})\ar[d]_{\iota^*_i}&\D(I)\ar[l]_{u^*}\ar[d]^{u(a)^*}\\
\D(u^{-1}(i))&\D(\bbone)\ar[l]^(.35){\pt^*_{u^{-1}(i)}}
}
\]
showing that $\iota_i^*(u^*{\mathscr X})= \pt^*_{u^{-1}(i)}u(a)^*{\mathscr X}= \pt^*_{u^{-1}(i)}(u^*{\mathscr X})_a$, as desired.
As for the second condition, choose $J\in \cat$ and ${\mathscr Y}\in \mathbb E(J)$, and let us study the component of the counit 
\[
\epsilon_{\mathscr Y}\colon u^*u_*{\mathscr Y}\longrightarrow {\mathscr Y}.
\] 
By (Der.2), isomorphisms in $\D({\Delta (I)})$ can be detected pointwise, so it is enough to show that, for each $a\in {\Delta (I)}$,  the following map  is an isomorphism
\[
a^*\epsilon_{\mathscr Y}\colon a^*u^*u_*{\mathscr Y}\longrightarrow {\mathscr Y}_a.
\] 
Let $a\in {\Delta (I)}$ and $i:=u(a)$; by (Der.4), there is an isomorphism 
\[
a^*u^*u_*{\mathscr Y}=i^*u_*{\mathscr Y}\overset{\cong}\longrightarrow  \holim_{i/u}\pr_i^*{\mathscr Y},
\] 
where $\pr_i\colon i/u\to {\Delta (I)}$ is the obvious projection. Hence, it suffices to prove that the canonical map $\holim_{i/u}\pr_i^*{\mathscr Y}\to {\mathscr Y}_a$ is an isomorphism (see also~\cite[Lem.\,8.7]{GS2017}).
Now let $\alpha_i\colon u^{-1}(i)\to i/u$ be the obvious inclusion; by Lemma \ref{basic_properties_delta_construction}, $\alpha_i$ is a left adjoint and so, by the dual of \cite[Prop.\,1.24]{Moritz}, the following canonical map is an isomorphism
\[
\holim_{i/u}\pr_i^*{\mathscr Y}\overset \cong \longrightarrow \holim_{u^{-1}(i)}\alpha_i^*\pr_i^*{\mathscr Y}=\holim_{u^{-1}(i)}\iota_i^*{\mathscr Y}\cong \holim_{u^{-1}(i)}\pt_{u^{-1}(i)}^*{\mathscr Y}_{a}
\]
where the last isomorphism holds since we have started with ${\mathscr Y}\in \mathbb E(J)$. To conclude recall that, by Lemma \ref{basic_properties_delta_construction}, there is a terminal object $t_i\colon \bbone \to u^{-1}(i)$, that is, $t_i$ is right adjoint to $\pt_{u^{-1}(i)}$. Hence, $\pt_{u^{-1}(i)}^*\cong (t_i)_*$, showing that 
\[
\holim_{u^{-1}(i)}\pt_{u^{-1}(i)}^*{\mathscr Y}_a\cong \holim_{u^{-1}(i)}(t_i)_*{\mathscr Y}_{a}\cong (\pt_{u^{-1}(i)}\circ t_i)_*{\mathscr Y}_{a}\cong {\mathscr Y}_{a}.\qedhere
\]
\end{proof}

\section{Lifting incoherent diagrams along diagram functors}\label{lift_sec}

Let $\D\colon \cat^{\op}\to \Cat$ be a strong and stable derivator. In this section we study the following problem: 
\begin{quotation}\noindent
Given an (incoherent) diagram $X\colon I\to \D(\bbone)$, under which conditions is it possible to lift it to a coherent diagram $\mathscr X$ of shape $I$? That is, can we find an object $\mathscr X\in \D(I)$ such that $\dia_I(\mathscr X)\cong X$?
\end{quotation}
The property of $\D$ being strong implies that we can always lift diagrams of shape $\bbtwo$. The main result of this section is the following theorem that gives sufficient conditions for a diagram of arbitrary shape to lift to a coherent diagram. These sufficient conditions consist in assuming that there are no ``negative extensions" in $\D(\bbone)$ between the components of our diagram $X\colon I\to \D(\bbone)$. Similar conditions are given to identify pairs of coherent diagrams $\mathscr X, \mathscr Y\in \D(I)$ for which $\dia_I(-)\colon \D(I)(\mathscr X, \mathscr Y)\to \D(\bbone)^I(\dia_I\mathscr X, \dia_I\mathscr Y)$ is bijective.

\begin{theorem}\label{main_lifting_theorem}
Given a strong and stable derivator $\D\colon \cat^{\op}\to \Cat$, the following statements hold true for any small category $I$:
\begin{enumerate}[\textup(1\textup)]
\renewcommand{\theenumi}{\textup{\arabic{enumi}}}
\item given ${\mathscr X},\ {\mathscr Y}\in \D(I)$, the canonical map $\D(I)({\mathscr X},{\mathscr Y})\to \D(\bbone)^I(\dia_I{\mathscr X},\dia_I{\mathscr Y})$ is an isomorphism provided $\D(\bbone)(\Sigma^n {\mathscr X}_i,{\mathscr Y}_j)=0$ for all $i,\, j\in I$ and $n>0$;
\item given $X\in \D(\bbone)^{I}$, there is an object ${\mathscr  X}\in \D(I)$ such that $\dia_I({\mathscr  X})\cong X$, provided $\D(\bbone)(\Sigma^n X_i,X_j)=0$ for all $i,\, j\in I$ and $n>0$.
\end{enumerate}
\end{theorem}

The proof of the above theorem is quite involved and it will occupy the rest of this section, which is divided in three subsections to reflect the main steps of the argument.

\subsection{Lifting diagrams of finite length}\label{finite_lift_subs}

In this subsection we are going to verify the statement of Theorem \ref{main_lifting_theorem} for diagrams of shape $I$, with $I$ a category of finite length. These results are very close to some of the main results in \cite{Porta}. We offer here a different and self-contained argument.

\begin{proposition}\label{prop_lift_I}
Let $\D\colon \cat^{\op}\to \Cat$ be a strong and stable derivator, $I$ a small category of finite length, and  ${\mathscr X},\ {\mathscr Y}\in \D(I)$. The canonical map 
\[
\dia_I(-)\colon\D(I)({\mathscr X},{\mathscr Y})\longrightarrow \D(\bbone)^I(\dia_I{\mathscr X},\dia_I{\mathscr Y}),
\] 
is an isomorphism provided $\D(\bbone)(\Sigma^n {\mathscr X}_i,{\mathscr Y}_j)=0$ for all $i,\, j\in I$ and $n>0$.
%
\end{proposition}
\begin{proof}
We proceed by induction on $\ell(I)$. If $\ell(I)=1$, then $I$ is a disjoint union of copies of $\bbone$  and, by (Der.1), there is nothing to prove. For $\ell(I)>1$, consider the setting of Notation \ref{notation_fl} and let $\mathscr X,\, \mathscr Y\in \D(I)$. By Lemmas \ref{split_induces_triangle_lemma} and \ref{lifting_inductive_step}, there is a triangle in $\D(I)$:
\[
\xymatrix{
\coprod_{I\setminus J}i_!i^*u_!u^*\mathscr X\ar[r]^-{\alpha}&\coprod_{I\setminus J}i_!i^*\mathscr X\sqcup u_!u^*\mathscr X\ar[r]^-{\beta}&\mathscr X\ar[r]&\Sigma\left(\coprod_{I\setminus J}i_!i^*u_!u^*\mathscr X\right).
}
\]
where the maps $\alpha$ and $\beta$ are described explicitly in Lemma \ref{lifting_inductive_step}. We are going now to apply the functor $(-,{\mathscr Y}):=\D(I)(-,{\mathscr Y})$ to the above triangle and study the resulting long exact sequence in $\Ab$. We start noting that, by Propositions \ref{morph_fin_length_prop_1} and \ref{morph_fin_length_prop_2}, for each $i\in I$, there is the following commutative diagram:
\[
\xymatrix@C=52pt@R=4pt{
(i_!i^*\mathscr X\sqcup u_!u^*\mathscr X,\mathscr Y)\ar@{=}[d]\\
(i_!i^*\mathscr X,\mathscr Y)\times (u_!u^*\mathscr X,\mathscr Y)\ar[rr]^-{\left[\begin{smallarray}{ccc}(-)\circ i_!i^*\epsilon_u&&
-(-)\circ \epsilon_i
\end{smallarray}\right]}%
\ar[dddddd]^-{\rotatebox[origin=c]{90}{$\sim$}}_{\left[\begin{smallarray}{cc}
i^*(-)\circ\eta_i&0\\
\\
0&u^*(-)\circ\eta_u\\
\end{smallarray}\right]}&&(i_!i^*u_!u^*\mathscr X,\mathscr Y)\ar[dddddd]^{(*)}_-{\rotatebox[origin=c]{90}{$\sim$}}\\
\\
\\
\\
\\
\\
\D(\bbone)(i^*\mathscr X,i^*\mathscr Y)\times \D(J)(u^*\mathscr X,u^*\mathscr Y)\ar[rr]_-{\left[\begin{smallarray}{ccc}
\pt^*_{u/i}(-)\circ\gamma^*&&
-\gamma^*\circ \pr^*_{i}(-)
\end{smallarray}\right]}&&\D(u/i)(\pr_i^*u^*(\mathscr X),\pt_{u/i}^*\mathscr Y_i)}
\]
where both columns are isomorphisms and the map $(*)$ is the composition of the three vertical maps on the right-hand-side of the diagrams in both Propositions   \ref{morph_fin_length_prop_1} and \ref{morph_fin_length_prop_2}. We also have the following series of isomorphisms: 
\begin{align*}
\left(\Sigma\left(\coprod{}_{I\setminus J}i_!i^*u_!u^*\mathscr X\right),\mathscr Y\right)&\cong \prod{}_{I\setminus J}\D(\bbone)(\Sigma i^*u_!u^*\mathscr X,\mathscr Y_i)\\
&\cong \prod{}_{I\setminus J}\D(\bbone)(\Sigma\hocolim_{u/i}\pr_i^*u^*\mathscr X,\mathscr Y_i)\\
&\cong \prod{}_{I\setminus J}\D(u/i)(\Sigma\pr_i^*u^*\mathscr X,\pt_{u/i}^*\mathscr Y_i),
\end{align*}
showing that $\left(\Sigma\left(\coprod{}_{I\setminus J}i_!i^*u_!u^*\mathscr X\right),\mathscr Y\right)=0$ by inductive hypothesis. Therefore, after all these observations, we have obtained the following commutative diagram in $\Ab$
\[
\xymatrix@C=10pt@R=14pt{
0\ar[d]&&\\
(\mathscr X,\mathscr Y)\ar[d]&&\\
{\begin{split}
\prod{}_{I\setminus J}\D(\bbone)&(i^*\mathscr X,i^*\mathscr Y)\\
&\times \D(J)(u^*\mathscr X,u^*\mathscr Y)\end{split}}\ar[dddd]|
-{
\footnotesize
\left[\begin{smallarray}{ccc|c}
\ddots &&{\text{\Large $0$}}&\vdots\\
&\pt_{u/i}^*(-)\circ\gamma^*&&\ -\gamma^*\circ \pr_i^*(-)\\
{\text{\Large $0$}}&&\ddots&\vdots\end{smallarray}\right]
}%
\ar[rr]^-{\dia}_-{\sim}&&
{\begin{split}\prod{}_{I\setminus J}&\D(\bbone)(i^*\mathscr X,i^*\mathscr Y)\\ 
&\times\D(\bbone)^J(\dia_{J}(u^*\mathscr X),\dia_J(u^*\mathscr Y))\end{split}} \ar[dddd]^{\Phi}\\
\\
\\
\\
\prod_{I\setminus J}\D(u/i)(\pr_i^*u^*(\mathscr X),\pt_{u/i}^*\mathscr Y_i)\ar[rr]^-{\dia}_-{\sim}&& \prod_{I\setminus J}\D(\bbone)^{u/i}(\dia_{u/i}(\pr_i^*u^*(\mathscr X)),\dia_{u/i}(\pt_{u/i}^*\mathscr Y_i))
}
\]
where the left column is exact, showing that $(\mathscr X,\mathscr Y)$ is the kernel of the following map in the diagram, while the rows consist in an application of the corresponding diagram functors and, therefore, they are isomorphisms by inductive hypothesis (as we are working with categories of shorter length than $\ell(I)$). To conclude, it is enough to verify that $\ker(\Phi)=\D(\bbone)^I(\dia_I\mathscr X,\dia_I\mathscr Y)$. Hence, take an element 
\[
\xymatrix{
f:=(f_k)_{k\in I}\in \prod_{I\setminus J}\D(\bbone)(i^*\mathscr X,i^*\mathscr Y)\times\D(\bbone)^J(\dia_{J}(u^*\mathscr X),\dia_J(u^*\mathscr Y))
}
\]
where $f_{\restriction J}:=(f_j)_{j\in J}$ satisfies the required compatibilities to be a morphism of diagrams $\dia_{J}(u^*\mathscr X)\to\dia_J(u^*\mathscr Y)$. By Propositions \ref{morph_fin_length_prop_1} and \ref{morph_fin_length_prop_2}, this $f$ is sent to
\[
\Phi(f)=((f_i\circ \mathscr X_a-\mathscr Y_a\circ f_j)_{(j,a)\in u/i})_{i\in I\setminus J}.
\]
That is, $\Phi((f_k)_{k\in I})=0$ if, and only if, for each $i\in I\setminus J$ and each $(j,a\colon j\to i)\in u/i$,  we have that $f_i\circ \mathscr X_a-\mathscr Y_a\circ f_j=0$, that is, each of the following diagrams commutes:
\[\xymatrix@R=25pt@C=50pt{
{\mathscr X}_j\ar[r]^{f_j}\ar[d]_{{\mathscr X}_a}&{\mathscr Y}_j\ar[d]^{{\mathscr Y}_a}\\
{\mathscr X}_i\ar[r]_{f_i}&{\mathscr Y}_i.
}\]
Equivalently, $(f_k)_{k\in I}$ represents a morphism in $\D(\bbone)^I$ from $\dia_I{\mathscr X}$ to $\dia_I{\mathscr Y}$.
\end{proof}

\begin{proposition}\label{prop_lift_II}
Let $\D\colon \cat^{\op}\to \Cat$ be a strong and stable derivator, $I$ a small category of finite length, and  $X\in \D(\bbone)^I$. Then, there is an object ${\mathscr  X}\in \D(I)$ such that $\dia_I({\mathscr  X})\cong X$, provided $\D(\bbone)(\Sigma^n X(i),X(j))=0$ for all $i,\, j\in I$ and $n>0$.
\end{proposition}
\begin{proof}
We proceed by induction on $\ell(I)$. If $\ell(I)=1$, then $I$ is a disjoint union of copies of $\bbone$  and, by (Der.1), there is nothing to prove. If $\ell(I)>1$, consider the setting of Notation~\ref{notation_fl}.
By inductive hypothesis we have  an object ${\mathscr X}_J\in \D(J)$ such that there exists an isomorphism
\[
\xymatrix{
\xi:=(\xi_j)_{j\in J}\colon \dia_J({\mathscr X}_J)\ar[r]^-{\sim}& X_{\restriction J}&&\text{in $\D(1)^J$.}
}
\] 
For each $i\in I$, let ${\mathscr X}_{u/i}:=\pr_i^*{\mathscr X}_J\in \D(u/i)$ and note  that 
\[
\xymatrix{
\dia_{u/i}({\mathscr X}_{u/i})=\dia_{J}({\mathscr X}_J)\circ \pr_i\ar[rrr]^-{\sim}_-{(\xi_{\pr_i(a)})_{a\in u/i}}&&& X_{\restriction J}\circ \pr_i.
}
\]
We also need to consider $\pt_{u/i}^*X(i)$, for which $\dia_{u/i}(\pt_{u/i}^*X(i))=\kappa_{u/i}(X(i))$ is constant, and $C_i:=\hocolim_{u/i}{\mathscr X}_{u/i}\in \D(\bbone)$, for which we have the usual isomorphism
\[
\xymatrix{
C_i=\hocolim_{u/i}{\mathscr X}_{u/i}\ar[rr]^-{\sim}_-{\gamma_!}&&i^*u_!{\mathscr X_J}.
}
\]
Consider now the following morphism in $\D(\bbone)^{u/i}$:
\[
\xymatrix{
(X_a)_{a\in u/i}\colon X_{\restriction J}\circ \pr_i\ar[rr]&&\kappa_{u/i}(X(i))
}
\]
where, given $a=(j,a\colon j\to i)\in u/i$, the morphism $X_a\colon X(j)\to X(i)$ is the obvious connecting map in $X\in \D(\bbone)^I$. By Proposition \ref{prop_lift_I}, the diagram functor $\dia_{u/i}$ induces the isomorphism $\D(u/i)({\mathscr X}_{u/i}, \pt_{u/i}^*X(i))\cong\D(\bbone)^{u/i}(\dia_{u/i}({\mathscr X}_{u/i}), \dia_{u/i}(\pt_{u/i}^*X(i)))$, hence there is a unique isomorphism
\[
\bar\varphi_i\colon{\mathscr X}_{u/i}\longrightarrow \pt_{u/i}^*X(i)
\]
such that $\dia_{u/i}(\bar\varphi_i)=(X_a\circ\xi_{\pr_i(a)})_{a\in u/i}$. Furthermore,  the adjunction $(\pt_{u/i})_!\dashv \pt_{u/i}^*$ and the axiom (Der.4) induce the following isomorphisms:
\begin{equation}\label{def_phi_i_eq}
\vcenter{
\xymatrix@C=22pt@R=2pt{
\D(\bbone)(i^*u_!{\mathscr X}_J,X(i))\ar[rr]^-{\sim}_-{(-)\circ\gamma_!}&&\D(\bbone)(C_i, X(i))\ar[rr]^-{\sim}_-{\pt_{u/i}^*(-)\circ\eta_{u/i}}&&\D(u/i)({\mathscr X}_{u/i},\pt_{u/i}^*X(i))\\
\varphi_i\ar@{|->}[rrrr]&&&& \bar \varphi_i
}
}
\end{equation}
so we can define $\varphi_i\colon i^*u_!{\mathscr X}_J\to X(i)$ as the unique map such that $\pt_{u/i}^*(\varphi_i\circ \gamma_!)\circ \eta_{u/i}=\bar\varphi_i$. Now define $\mathscr X$ as the cone in $\D(I)$ of the following map $\alpha$:
\[
\xymatrix@C=32pt@R=9pt{
\coprod_{I\setminus J}i_!i^*u_!{\mathscr X}_J\ar[rr]^-{
\footnotesize
\alpha:=\left[\begin{smallarray}{ccc}
\ddots &&{\text{\Large $0$}}\\
&i_!\varphi_i&\\
{\text{\Large $0$}}&&\ddots\\
\hline
\cdots&-\epsilon_i&\cdots
\end{smallarray}\right]
}%
&& \coprod_{I\setminus J}i_!X(i)      \sqcup    u_!{\mathscr X}_J   \ar[r]^-\beta
&{\mathscr X}\ar[r]&\Sigma{\left(\coprod_{I\setminus J}i_!i^*u_!{\mathscr X}_J\right)}.
}
\]
Note also that the above triangle is pointwise split-exact: it is enough to verify that $k^*(\alpha)$ is a split monomorphism in $\D(\bbone)$ for all $k\in I$, but this is trivially true for $k\in J$, as in this case $k^*(\coprod_{I\setminus J}i_!i^*u_!{\mathscr X}_J)=0$, while for $k=i\in I\setminus J$, so the morphism $i^*(\alpha)$ becomes:
\[
\xymatrix@C=32pt@R=9pt{
i^*i_!i^*u_!{\mathscr X}_J\ar[rr]^-{
\footnotesize
i^*(\alpha)=\left[\begin{smallarray}{ccc}
i^*i_!\varphi_i\\
 \\
\hline
\\
-i^*\epsilon_i
\end{smallarray}\right]
}%
&& i^*i_!X(i)      \sqcup    i^*u_!{\mathscr X}_J,  
}
\]
which is a split monomorphism because $-i^*\epsilon_i\colon i^*i_!i^*u_!{\mathscr X}_J\to i^*u_!{\mathscr X}_J  $ is an isomorphism. As a consequence, one sees that $\dia_{I}(\mathscr X)$ is the cokernel of the map $\dia_I(\alpha)$. Finally, define the following map of (incoherent) diagrams
\[
\varphi:=(\varphi_i)_{i\in I}\colon \dia_{I}(u_!\mathscr X_J)\longrightarrow X\qquad\text{in $\D(\bbone)^I$},
\] 
where $\varphi_i$ is the map defined in \eqref{def_phi_i_eq},
and consider the following commutative diagram
\[
\xymatrix@C=28pt@R=30pt{
\coprod_{I\setminus J}\dia_I(i_!i^*u_!{\mathscr X}_J)\ar[d]^-{\rotatebox[origin=c]{90}{$\sim$}}_{\chi}\ar[rr]^-{\dia_I(\alpha)}%
&& \coprod_{I\setminus J}\dia_I(i_!X(i))\sqcup   \dia_I( u_!{\mathscr X}_J)\ar[d]^-{\rotatebox[origin=c]{90}{$\sim$}}_{\chi}   \ar[rr]^-{\dia_I(\beta)}&&{\dia_I(\mathscr X)}
\\
\coprod_{I\setminus J}(i^*u_!{\mathscr X}_J)\otimes i\ar[rr]|-{
\footnotesize
\left[\begin{smallarray}{ccc}
\ddots &&{\text{\Large $0$}}\\
&\varphi_i\otimes i&\\
{\text{\Large $0$}}&&\ddots\\
\hline
\cdots&-e_i&\cdots
\end{smallarray}\right]
}%
&& \coprod_{I\setminus J}X(i)\otimes i\sqcup   \dia_I( u_!{\mathscr X}_J)   \ar[rr]_-{
\footnotesize
\left[\begin{array}{ccc|c}
\cdots &e_i&\cdots&\varphi
\end{array}\right]
}&&{X}}
\]
where the vertical maps are induced by the natural isomorphisms $\chi\colon \dia_I\circ i_!\tilde{\Longrightarrow}-\otimes i$ (with $i$ varying in $I\setminus J$) and $e_i$ is the counit of the adjunction $(-\otimes i)\dashv (-)_{\restriction i}$ (see Example~\ref{coh_incoh_rest_ext_example}). One now checks easily that the second row is point-wise split, so $X$ is a cokernel of the first map, which is isomorphic to $\dia_I(\alpha)$. Therefore, $\dia_I(\mathscr X)\cong X$, as desired.
\end{proof}

\subsection{Lifting diagrams of countable length}\label{countable_lift_subs}

\begin{proposition}\label{cor_lift_countable_I}
Let $\D\colon \cat^{\op}\to \Cat$ be a strong and stable derivator, $I$ a small category of countable length and ${\mathscr X},\, \ {\mathscr Y}\in \D(I)$. Then, the canonical map 
\[
\dia_I(-)\colon \D(I)({\mathscr X},{\mathscr Y})\longrightarrow \D(\bbone)^I(\dia_I({\mathscr X}),\dia_I({\mathscr Y}))
\] 
is an isomorphism provided $\D(\bbone)(\Sigma^n {\mathscr X}_i,{\mathscr Y}_j)=0$ for all $i,\, j\in I$ and $n>0$.
\end{proposition}
\begin{proof}
Fix Notation \ref{notation_countable_cat} and consider the following triangle given by the Lemmas \ref{split_induces_triangle_lemma} and~\ref{dist_triangle_countable_length}, where $\alpha$ and $\beta$ are the explicit matrices given in the lemma:
\[
\xymatrix@C=20pt{
\coprod_{\N}(\iota_n)_!\iota_n^*({\mathscr X})\ar[rr]^{\alpha}&&\coprod_{\N}(\iota_n)_!\iota_n^*({\mathscr X})\ar[rr]^-{\beta}&&{\mathscr X}\ar[rr]&&\Sigma\left(\coprod_{\N}(\iota_n)_!\iota_n^*({\mathscr X})\right).
}
\] 
Apply $(-,{\mathscr Y}):=\D(I)(-,{\mathscr Y})$ to the above triangle to get the following long exact sequence:
\[
\xymatrix@C=8.5pt{
\cdots \ar[r]&0\ar[r]&({\mathscr X},{\mathscr Y})\ar[r]&(\coprod_\N (\iota_n)_!\iota_n^*({\mathscr X}),{\mathscr Y})\ar[rrr]^{(-)\circ\alpha}&&&(\coprod_{\N}(\iota_n)_!\iota_n^*({\mathscr X}),{\mathscr Y})\ar[r]&\cdots
}
\]
where we can write the $0$ on the left by the following series of isomorphisms: 
\begin{align*}
\textstyle
\left(\Sigma(\coprod_{\N}(\iota_n)_!\iota_n^*({\mathscr X})),{\mathscr Y}\right)&\cong \textstyle\prod_{\N}\left((\iota_n)_!\Sigma\iota_n^*({\mathscr X}),{\mathscr Y}\right)\\
&\cong \textstyle\prod_{\N}\D(I_{\leq n})\left(\Sigma\iota_n^*({\mathscr X}),\iota_n^*({\mathscr Y})\right)\\
&\cong \textstyle\prod_{\N}\D(\bbone)^{I_{\leq n}}\left(\dia_{I_{\leq n}}(\Sigma\iota_n^*({\mathscr X})),\dia_{I_{\leq n}}(\iota_n^*({\mathscr Y}))\right)=0.
\end{align*}
where the first isomorphism holds because $\Sigma$ is an equivalence (and, as such, it commutes with coproducts) and $(\iota_n)_!$ is triangulated, the second isomorphism follows by the adjunction $(\iota_n)_!\dashv \iota_n^*$, the third one is a consequence of Proposition~\ref{prop_lift_I} (as $I_{\leq n}$ is of finite length) and the last equality to $0$ follows by our orthogonality hypotheses. A consequence of the  long exact sequence above is that $(\mathscr X,\mathscr Y)$ is the kernel of the map $(-)\circ \alpha$ and, therefore, we will have concluded if we could prove that also $\D(\bbone)^I(\dia_I(\mathscr X),\dia_I(\mathscr Y))$ is a kernel for this map. We start considering the following commutative diagram:
\[
\xymatrix@C=48pt@R=30pt{
\D(I)((\iota_n)_!\iota_n^*({\mathscr X}),{\mathscr Y})\ar@{<-}[r]^-{(-)\circ\partial^n}\ar[d]^-{\rotatebox[origin=c]{90}{$\sim$}}_-{\iota_n^*(-)\circ\eta_n}&\D(I)((\iota_{n+1})_!\iota_{n+1}^*({\mathscr X}),{\mathscr Y})\ar[d]_-{\rotatebox[origin=c]{90}{$\sim$}}^-{\iota_{n+1}^*(-)\circ\eta_{n+1}}\\
\D(I_{\leq n})(\iota_n^*({\mathscr X}),\iota_n^*({\mathscr Y}))\ar@{<-}[r]^{\iota_{n+1,n}^*(-)}\ar[d]^-{\rotatebox[origin=c]{90}{$\sim$}}_-{\dia_{I_{\leq n}}(-)}&\D(I_{\leq n+1})(\iota_{n+1}^*({\mathscr X}),\iota_{n+1}^*({\mathscr Y}))\ar[d]_-{\rotatebox[origin=c]{90}{$\sim$}}^-{\dia_{I_{\leq n+1}}(-)}\\
\D(\bbone)^{I_{\leq n}}(\dia_I({\mathscr X})_{\restriction I_{\leq n}},\dia_I({\mathscr Y})_{\restriction I_{\leq n}})\ar@{<-}[r]^{(-)_{\restriction I_{\leq n}}}&\D(\bbone)^{I_{\leq n+1}}(\dia_I({\mathscr X})_{\restriction I_{\leq n+1}},\dia_I({\mathscr Y})_{\restriction I_{\leq n+1}})
}
\]
where the upper square commutes by Proposition \ref{morph_count_length_prop}, and the lower vertical maps are isomorphisms because of Proposition \ref{prop_lift_I} (and the fact that $I_{\leq n}$ has finite length, for all \mbox{$n\in \N$}).
%
%
Hence, identifying $\D(I_{\leq n})(\coprod_{\N}(\iota_n)_!\iota_n^*({\mathscr X}),{\mathscr Y})$ with $\prod_{\N}\D(I_{\leq n})((\iota_n)_!\iota_n^*({\mathscr X}),{\mathscr Y})$, and also identifying each $\D(I_{\leq n})((\iota_n)_!\iota_n^*({\mathscr X}),{\mathscr Y})$ with $\D(\bbone)^{I_{\leq n}}(\dia_I({\mathscr X})_{\restriction I_{\leq n}},\dia_I({\mathscr Y})_{\restriction I_{\leq n}})$ via the isomorphism $\dia_{I_{\leq n}}(\iota_n^*(-)\circ \eta_n)$ (as in the columns of the above diagram), we can describe the action of $(-)\circ \alpha$ as follows:
\begin{align*}
\textstyle\prod_n\D(\bbone)^{I_{\leq n}}(\dia_I({\mathscr X})_{\restriction{I_{\leq n}}},\dia_I({\mathscr Y})_{\restriction{I_{\leq n}}})&\textstyle\to \prod_n\D(\bbone)^{I_{\leq n}}(\dia_I({\mathscr X})_{\restriction{I_{\leq n}}},\dia_I({\mathscr Y})_{\restriction{I_{\leq n}}})
\\
(\phi_n)_{n\in \N}&\mapsto (\phi_n-(\phi_{n+1})_{\restriction{I_{\leq n}}})_{n\in\N}.
\end{align*}
Thus, a sequence $\phi:=(\phi_n)_{n\in\N}\in \prod_n\D(\bbone)^{I_{\leq n}}(\dia_I({\mathscr X})_{\restriction{I_{\leq n}}},\dia_I({\mathscr Y})_{\restriction{I_{\leq n}}})$ is in the kernel of $(-)\circ \alpha$ if, and only if, $(\phi_{n+1})_{\restriction{I_{\leq n}}}=\phi_n$ for all $n\in\N$, that is if, and only if, $\phi$ represents (the sequence of successive truncations of) a morphism $\dia_I({\mathscr X})\to \dia_I({\mathscr Y})$. 
\end{proof}

\begin{proposition}\label{cor_lift_countable_II}
Let $\D\colon \cat^{\op}\to \Cat$ be a strong and stable derivator, $I$ a small category of countable length and $X\in \D(\bbone)^{I}$. Then, there is an object ${\mathscr  X}\in \D(I)$ such that $\dia_I({\mathscr  X})\cong X$, provided $\D(\bbone)(\Sigma^n X_i,X_j)=0$ for all $i,\, j\in I$ and $n>0$.
\end{proposition}
\begin{proof}
For each $n\in\N$, the inclusions $\iota_n\colon I_{\leq n}\to I$ and $\iota_{n+1,n}\colon I_{\leq n}\to I_{\leq n+1}$ are cosieves and, therefore, the associated restrictions 
\[
(-)_{\restriction I_{\leq n}}\colon \D(\bbone)^I\longrightarrow \D(\bbone)^{I_{\leq n}}\qquad\text{and}\qquad(-)_{\restriction I_{\leq n+1}}\colon \D(\bbone)^I\longrightarrow \D(\bbone)^{I_{\leq n+1}}
\] 
 have left adjoints that act as extension by zero functors (recall Remark~\ref{rem.cosieve}). We denote these adjoints by
 \[
(-)\otimes \iota_n\colon \D(\bbone)^{I_{\leq n}}\longrightarrow \D(\bbone)^I\qquad\text{and}\qquad(-)\otimes \iota_{n+1,n}\colon \D(\bbone)^{I_{\leq n}}\longrightarrow \D(\bbone)^{I_{\leq n+1}},
\]
respectively. We denote the counits of the above adjunctions by 
\[
e^n=(e^n_i)_I\colon(-)_{\restriction I_{\leq n}}\otimes \iota_n\Rightarrow \id_{\D(\bbone)^I}\quad\text{and}\quad d^n=(d^n_i)_{I_{\leq n+1}}\colon(-)_{\restriction I_{\leq n}}\otimes \iota_{n+1,n}\Rightarrow \id_{\D(\bbone)^{I_{\leq n+1}}}
\]
where $e^n_i$ and $d^n_i$ are identities if $i\in I_{\leq n}$ and $0$ otherwise.

Consider now the diagram $X\in \D(\bbone)^I$ of the statement, let $X^n:=(X_{\restriction I_{\leq n}})\otimes \iota_n$, for all $n\in \N$, and consider the counit 
\[
d^n\colon (X_{\restriction I_{\leq n}})\otimes \iota_{n+1,n}=(X_{\restriction I_{\leq n+1}})_{\restriction I_{\leq n}}\otimes \iota_{n+1,n}\longrightarrow X_{\restriction I_{\leq n+1}}.
\]
Extending this counit we get a morphism $d^n\otimes\iota_{n+1}\colon X^n\to X^{n+1}$ (whose components relative to $I_{\leq n}$ are  identities and the remaining components are trivial). Then there is a pointwise split-exact sequence in $\D(\bbone)^I$ of the form 
\[
\xymatrix@C=35pt{
\coprod_{\N}X^n\ar[rrr]^(.47){
\footnotesize
\bar\alpha:=
\left[\begin{smallarray}{cccc}
1&&&\\
-d^0\otimes{\iota_1}&1&&{\text{\Large $0$}}\\
&-d^1\otimes{\iota_2}&1&\\
{\text{\Large $0$}}&&\ddots&\ddots
\end{smallarray}\right]
}&&&\coprod_{\N}X^n\ar[rrr]^-{
\footnotesize
\bar\beta:=
\left[\begin{smallarray}{cccc}
e^0 &e^1 &e^2 &\cdots
\end{smallarray}\right]
}&&&X.
}
\] 
By Proposition \ref{prop_lift_II}, for any $n\in\N_{>0}$ we can find an object ${\mathscr X^n}\in\D(I_{\leq n})$ such that $\dia_{I_{\leq n}}({\mathscr X^n})\cong X_{\restriction I_{\leq n}}$. Furthermore, by Proposition \ref{cor_lift_countable_I}, for each $n\in \N$ there is a unique morphism $\delta^n\colon(\iota_{n+1,n})_!\mathscr X^n\to \mathscr X^{n+1}$ such that $\dia_{I_{\leq n+1}}(\delta^n)=d^n$. Consider also a natural isomorphism $\xi^n\colon (\iota_n)_!\Rightarrow (\iota_{n+1})_!\circ (\iota_{n+1,n})_!$  (which exists because these functors are both left adjoints to $\iota_n^*=\iota_{n+1,n}^*\circ \iota_{n}^*$) and construct an object $\mathscr X\in \D(I)$ as the cone of the map $\alpha$ in the following triangle:
\[
\xymatrix@C=14pt{
\coprod_{\N}(\iota_n)_!(\mathscr X^n)\ar[rrrrrrrr]^(.46){
\footnotesize
\alpha:=
\left[\begin{smallarray}{ccccc}
1&&&\\
-(\iota_{1})_!\delta^0\circ\xi^0&1&&{\text{\Large $0$}}\\
&-(\iota_{2})_!\delta^1\circ\xi^1&&1&\\
{\text{\Large $0$}}&&\ddots&&\ddots
\end{smallarray}\right]
}&&&&&&&&\coprod_{\N}(\iota_n)_!(\mathscr X^n)\ar[r]&\mathscr X\ar[r]&\left(\coprod_{\N}(\iota_n)_!(\mathscr X^n)\right).
}
\]
Just by construction, $\dia_I(\alpha)$ is isomorphic to $\bar\alpha$, so the above triangle is pointwise split-exact and $\dia_I(\mathscr X)$ is the cokernel of  $\dia_I(\alpha)$. Since $X$ is the cokernel of $\bar \alpha$, we deduce that $\dia_I(\mathscr X)\cong X$.
\end{proof}

\subsection{Lifting in general}\label{general_lifting_subs}

We finally have all the ingredients to complete the proof of the main result of this section:

\begin{proof}[Proof of Theorem \ref{main_lifting_theorem}]
As in Subsection \ref{homotopical_epi_subs}, consider the category of countable length $\Delta(I)$ and the homotopical epimorphism $u\colon \Delta(I)\to I$. By the results of Subsection \ref{countable_lift_subs}, we know that the statements (1) and (2) hold in $\D(\Delta(I))$ and we should try to restrict them along the fully faithful functor $u^*\colon \D(I)\to \D(\Delta(I))$.  

\smallskip\noindent
(1) Let ${\mathscr X}\!,\, {\mathscr Y}\in \D(I)$ and consider the following commutative diagram:
\[
\xymatrix{
\D(I)({\mathscr X},{\mathscr Y})\ar[d]|\cong\ar[rr]&&\D(\bbone)^I(\dia_I{\mathscr X},\dia_I{\mathscr Y})\ar[d]|{(*)}\\
\D(\Delta(I))(u^*{\mathscr X},u^*{\mathscr Y})\ar[rr]|(.4){(**)}&&\D(\bbone)^{\Delta(I)}((\dia_I{\mathscr X})\circ u,(\dia_I{\mathscr Y})\circ u)
}
\]
where the leftmost vertical map is an isomorphism since $u^*$ is fully faithful, $(**)$ is an isomorphism by Proposition \ref{cor_lift_countable_I} and the fact that $\Delta(I)$ has countable length, and $(*)$ is an isomorphism by Lemma \ref{homo_epi_implies_lax_epi_lem}. 

\smallskip\noindent
(2) Given $X$ as in the statement, by Proposition \ref{cor_lift_countable_II} there is ${\mathscr X}_{\Delta(I)}$ in $\D(\Delta(I))$ such that $\dia_{\Delta(I)}({\mathscr X}_{\Delta(I)})\cong X\circ u$. Now one should just prove that ${\mathscr X}_{\Delta(I)}$ does belong in the essential image of $u^*$, but this is true since being constant on fibers is a property that just depends on the underlying (incoherent) diagrams (see the discussion before Definition \ref{constant_fiber_def}). 
\end{proof}

\section{\texorpdfstring{$t$}{t}-Structures on strong and stable derivators}\label{sec_tstr}

Let us start by fixing a strong and stable derivator
\[
\D\colon \cat^{\op}\to \Cat.
\]
By definition, a $t$-structure in $\D$ is just a $t$-structure in the base of $\D$, so let us also fix such a $t$-structure $\t=(\U,\Sigma \V)$ on $\D(\bbone)$ whose heart we denote by $\H$.  Furthermore, for each small category $I$, we let
\[\begin{split}
&\U_I:=\{\mathscr X\in \D(I):\mathscr X_i\in \U,\ \forall i\in I\}\\
&\V_I:=\{\mathscr X\in \D(I):\mathscr X_i\in \V,\ \forall i\in I\}.
\end{split}\]
The goal of this section is to prove Theorem~\ref{mainthm.lift_tstructure_general_thm}, i.e., we will verify that each $\t_I:=(\U_I,\Sigma\V_I)$ is a $t$-structure in $\D(I)$ and, moreover, that the heart $\H_I$ of $\t_I$ is equivalent to the functor category $\H^I$. Our proof will rely heavily on the results of Section \ref{lift_sec}; also, the scheme of the proof will be the same: we will verify our statement for categories of finite length, then for categories of countable length and, finally, in full generality.

\smallskip
We start with the following definition:

\begin{definition} \label{def.class closed under hocolim}
A subcategory $\mathcal{C}$ of $\mathbb{D}(\bbone)$ is said to be \emph{closed under taking homotopy colimits} (resp., {\em directed homotopy colimits}) with respect to $\mathbb{D}$, when for any small category (resp., directed set) $I$  and any object $\mathscr X\in\mathbb{D}(I)$, one has that $\hocolim_I \mathscr X\in\mathcal{C}$ whenever $\mathscr X_i\in\mathcal{C}$ for all $i\in I$. Closedness under (inverse) homotopy limits is defined dually.
\end{definition}

In fact, when the class $\C$ in the above definition is either an aisle or a co-aisle, then some closure properties are automatic: 

\begin{proposition} \label{prop.aisles closed for hocolim}
In the above setting, the aisle $\mathcal{U}$ is closed under taking homotopy colimits and the co-aisle $\mathcal{V}$ is closed under taking homotopy limits. 
\end{proposition}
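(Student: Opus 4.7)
The plan is to prove the aisle statement; the co-aisle claim follows by the dual argument, using $\pt_I^*\dashv \holim_I$ in place of $\hocolim_I\dashv \pt_I^*$. Since $\mathcal{U}={}^{\perp}(\Sigma^{-1}\mathcal{V})$ and since $\pt_I^*$ is a triangulated functor (in particular commutes with $\Sigma$), showing that $\hocolim_I\mathscr{X}\in\mathcal{U}$ for $\mathscr{X}\in\mathcal{U}_I$ amounts, by adjunction, to proving
\[
\D(I)\bigl(\mathscr{X},\, \Sigma^{-1}\pt_I^*W\bigr)\;\cong\;\D(\bbone)\bigl(\hocolim_I\mathscr{X},\, \Sigma^{-1}W\bigr)\;=\;0
\]
for every $W\in\mathcal{V}$.

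Next, I would invoke Theorem \ref{main_lifting_theorem}(1) to compute the left-hand side via the diagram functor. The hypothesis to verify is that $\D(\bbone)(\Sigma^n\mathscr{X}_i,(\Sigma^{-1}\pt_I^*W)_j)=0$ for all $i,j\in I$ and all $n>0$. By Example \ref{description_constant_diagram} the coherent diagram $\Sigma^{-1}\pt_I^*W$ has every component equal to $\Sigma^{-1}W$, and by axiom ($t$-S.2) we have $\Sigma^n\mathscr{X}_i\in\mathcal{U}$ for every $n\geq 0$; thus the vanishing reduces to the defining orthogonality $\D(\bbone)(\mathcal{U},\Sigma^{-1}\mathcal{V})=0$ of axiom ($t$-S.1).

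With the hypothesis verified, Theorem \ref{main_lifting_theorem}(1) gives
\[
\D(I)\bigl(\mathscr{X},\Sigma^{-1}\pt_I^*W\bigr)\;\cong\;\D(\bbone)^{I}\bigl(\dia_I\mathscr{X},\, \dia_I(\Sigma^{-1}\pt_I^*W)\bigr),
\]
and the right-hand side is the set of natural transformations from $\dia_I\mathscr{X}$ to the constant $I$-diagram at $\Sigma^{-1}W$. Each component of such a natural transformation lies in $\D(\bbone)(\mathscr{X}_i,\Sigma^{-1}W)=0$, so the whole set is trivial. This delivers $\hocolim_I\mathscr{X}\in\mathcal{U}$ and, dually, $\holim_I\mathscr{X}\in\mathcal{V}$ whenever $\mathscr{X}\in\mathcal{V}_I$.

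The only genuine content is the reduction to a Hom-computation of incoherent diagrams, i.e. the invocation of Theorem \ref{main_lifting_theorem}(1); once that is in place everything else is a purely formal adjunction/orthogonality calculation. The main obstacle—namely controlling coherent Hom-sets in $\D(I)$ by pointwise data in $\D(\bbone)$—has in effect already been resolved by Section \ref{lift_sec}, so no further technical machinery should be required here.
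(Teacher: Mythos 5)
Your argument is correct in substance and takes a genuinely different route from the paper's. The paper deduces the result from external machinery: it observes that aisles are closed under coproducts, cites \cite{GPS-MayerVietoris} or \cite{stovivcek2016compactly} for closure under homotopy pushouts, and then invokes \cite[Theorem~7.13]{Ponto-Schulman}, which shows that a full subcategory closed under coproducts and homotopy pushouts is closed under all homotopy colimits. You instead work entirely within the paper's own toolkit: you reduce the claim to vanishing of Hom-sets via the adjunction $\hocolim_I\dashv\pt_I^*$, and then use Theorem~\ref{main_lifting_theorem}(1) to replace a coherent Hom in $\D(I)$ by an incoherent one in $\D(\bbone)^I$ whose components vanish by axiom ($t$-S.1). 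This buys a self-contained argument that does not require the Ponto--Schulman result, at the cost of presupposing the (significantly nontrivial) lifting Theorem~\ref{main_lifting_theorem} — which, however, is established before this point and independently of the $t$-structure material, so there is no circularity.

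One small slip in notation: with the paper's convention $\V:=\U^\perp$ (so that the $t$-structure is $(\U,\Sigma\V)$), one has $\U={}^{\perp}\V$, not $\U={}^{\perp}(\Sigma^{-1}\V)$ as you write; the latter equals $\Sigma^{-1}\U$, which in general strictly contains $\U$. As written, your reduction therefore only establishes $\hocolim_I\mathscr{X}\in\Sigma^{-1}\U$. The fix is immediate: target $\pt_I^*V$ with $V\in\V$ rather than $\Sigma^{-1}\pt_I^*W$, check the hypothesis $\D(\bbone)(\Sigma^n\mathscr{X}_i,V)=0$ for $n>0$ (which holds since $\Sigma^n\mathscr{X}_i\in\U$ and $V\in\V=\U^\perp$), and conclude as you do. The rest of the computation and the dual argument for the co-aisle go through unchanged.
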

\begin{proof}
Using the terminology of \cite{Ponto-Shulman}, it is clear that $\mathcal{U}$ is closed under coproducts and it follows from \cite[Thm.\,6.1]{GPS-MayerVietoris} or \cite[Prop.\,2.4]{stovivcek2016compactly} that it is closed under homotopy pushouts. Then,  by \cite[Thm.\,7.13]{Ponto-Shulman}, we conclude that $\mathcal{U}$ is closed under homotopy colimits. The argument for the co-aisle is formally dual.
\end{proof}

Now we can prove the lifting property of $t$-structures for categories of finite length.

\begin{lemma}\label{lifting_tstructure_finite_lem}
In the above setting, $\t_I=(\U_I,\Sigma \V_I)$ is a $t$-structure in $\D(I)$, for any small category of finite length $I$.
\end{lemma}
\begin{proof}
We proceed by induction on $\ell(I)$. If $\ell(I)=1$, then $I$ is a disjoint union of copies of $\bbone$  and, by (Der.1), there is nothing to prove. Hence, let $I$ be a small category of finite length $\ell(I)>1$, fix Notation \ref{notation_fl}, and let us verify that $\t_I$ is a $t$-structure. Note that, by Theorem \ref{main_lifting_theorem}, the axiom ($t$-S.1) holds, while the axiom ($t$-S.2) holds  by construction. Hence, we have just to verify ($t$-S.3).
Indeed, take $\mathscr Y\in \D(I)$ and let us construct a suitable truncation triangle with respect to $\t_I$. By inductive hypothesis, we can consider the following truncation triangles in $\D(J)$ and $\D(\bbone)$, respectively, 
\[
\xymatrix@C=28pt{
{\mathscr U}_{u^*{\mathscr Y}}\ar[r]^-{\varphi_u}& u^*{\mathscr Y}\ar[r]^-{\psi_u}& {\mathscr V}_{u^*{\mathscr Y}}\ar[r]& \Sigma {\mathscr U}_{u^*{\mathscr Y}}&
{ U}_{{\mathscr Y}_i}\ar[r]^-{\varphi_i}& {\mathscr Y}_i\ar[r]^-{\psi_i}& { V}_{{\mathscr Y}_i}\ar[r]& \Sigma { U}_{{\mathscr Y}_i}
}
\]
with $i$ ranging in $I\setminus J$. Furthermore, since $\varphi_i\colon { U}_{{\mathscr Y}_i}\to {\mathscr Y}_i$ is a coreflection of ${\mathscr Y}_i$ onto $\U$, and $i^*u_!\mathscr{U}_{u^*\mathscr{Y}}\cong \hocolim_{u/i}\pr_i^*\mathscr{U}_{u^*\mathscr{Y}}\in \U$ (by Proposition \ref{prop.aisles closed for hocolim}), there exists a unique morphism $e_i\colon i^*u_!\mathscr{U}_{u^*\mathscr{Y}}\to U_{\mathscr{Y}_i}$ that makes the following square commute:
\[
\xymatrix@C=70pt{
i^*u_!\mathscr{U}_{u^*\mathscr{Y}}\ar@{.>}[r]^-{\exists!\,e_i}\ar[d]_-{i^*u_!\varphi_u}&U_{\mathscr{Y}_i}\ar[d]^-{\varphi_i}\\
i^*u_!u^*\mathscr{Y}\ar[r]_-{i^*\epsilon_u}&\mathscr{Y}_i.
}
\]
Consider now the following commutative square in $\D(I)$:
\[
\xymatrix@C=50pt@R=75pt{
\coprod_{I\setminus J}i_!i^*u_!{\mathscr U}_{u^*{\mathscr Y}}\ar[rr]|-{
\footnotesize
\,\,\alpha_{\mathscr U}:=\left[\begin{smallarray}{ccc}
\ddots &&{\text{\Large $0$}}\\
&i_!e_i&\\
{\text{\Large $0$}}&&\ddots\\
\hline
\cdots&-\epsilon_i&\cdots
\end{smallarray}\right]
}\ar[d]|-{
\footnotesize
\Phi^{(1)}:=\left[\begin{smallarray}{ccc}
\ddots &&{\text{\Large $0$}}\\
&i_!i^*u_!\varphi_u&\\
{\text{\Large $0$}}&&\ddots
\end{smallarray}\right]
}&&\coprod_{I\setminus J}i_!U_{{\mathscr Y}_i}\sqcup u_!{\mathscr U}_{u^*{\mathscr Y}}\ar[d]|-{
\footnotesize
\ \ \ \ \left[\begin{smallarray}{ccc|c}
\ddots &&{\text{\Large $0$}}&\vdots\\
&i_!\varphi_i&&0\\
{\text{\Large $0$}}&&\ddots&\vdots\\
\hline
\cdots&0&\cdots&u_!\varphi_u
\end{smallarray}\right]=:\Phi^{(2)}
}\\ 
\coprod_{I\setminus J}i_!i^*u_!u^*{\mathscr Y}\ar[rr]|-{
\footnotesize
\, \, \alpha:=\left[\begin{smallarray}{ccc}
\ddots &&{\text{\Large $0$}}\\
&i_!i^*\epsilon_u&\\
{\text{\Large $0$}}&&\ddots\\
\hline
\cdots&-\epsilon_i&\cdots
\end{smallarray}\right]
}&&\coprod_{I\setminus J}i_!{\mathscr Y}_i\sqcup u_!u^*{\mathscr Y}
}
\]
where the cone of $\alpha$ is just $\mathscr Y$, by Lemma \ref{lifting_inductive_step}. 
By the $3\times 3$ Lemma in triangulated categories we can complete the above square to a commutative diagram in $\D(I)$, where all the rows and columns are distinguished triangles
\[
\xymatrix@R=15pt{
\coprod_{I\setminus J}i_!i^*u_!{\mathscr U}_{u^*{\mathscr Y}}\ar[rr]^-{\alpha_{\mathscr U}}\ar[dd]_-{\Phi^{(1)}}&&\coprod_{I\setminus J}i_!U_{{\mathscr Y}_i}\sqcup u_!{\mathscr U}_{u^*{\mathscr Y}}\ar[dd]^{\Phi^{(2)}}\ar[rr]^-{\beta_{\mathscr U}}&&{\mathscr U}\ar[dd]\ar[r]^(.65)+&\\ 
\\
\coprod_{I\setminus J}i_!i^*u_!u^*{\mathscr Y}\ar[rr]^-{\alpha}\ar[dd]_-{\Psi^{(1)}}&&\coprod_{I\setminus J}i_!{\mathscr Y}_i\sqcup u_!u^*{\mathscr Y}\ar[rr]^-{\beta}\ar[dd]^-{\Psi^{(2)}}&&{\mathscr Y}\ar[dd]\ar[r]^(.65)+&\\
\\
{\mathscr V}^{(1)}\ar[d]^(.65)+\ar[rr]^-{\alpha_{\mathscr V}}&&{\mathscr V}^{(2)}\ar[d]^(.65)+\ar[rr]^-{\beta_{\mathscr V}}&&{\mathscr V}\ar[d]^(.65)+\ar[r]^(.65)+&\\
&&&&&
}
\]
and $\beta={
\footnotesize
\left[\begin{array}{ccc|c}
\cdots &\epsilon_i&\cdots&\epsilon_u
\end{array}\right]
}$.
We will have concluded if we can prove that ${\mathscr U}\in \U_I$ and ${\mathscr V}\in \V_I$, that is, $k^*{\mathscr U}\in \U$ and $k^*{\mathscr V}\in \V$ for any $k\in I$. We start from the case when $k\in J$, and we apply $k^*$  to the above $3\times 3$ diagram, obtaining the following commutative diagram in $\D(\bbone)$, where all the rows and columns are distinguished triangles:
\[
\xymatrix@C=30pt@R=10pt{
0\ar[rr]\ar[dd]&&0\sqcup k^*u_!{\mathscr U}_{u^*{\mathscr Y}}\ar[dd]\ar[rr]^-{\sim}&&{\mathscr U}_k\ar[dd]\ar[r]^(.65)+&\\ 
\\
0\ar[rr]\ar[dd]&&0\sqcup k^*u_!u^*{\mathscr Y}\ar[rr]^-{\sim}\ar[dd]&&{\mathscr Y}_k\ar[dd]\ar[r]^(.65)+&\\
\\
({\mathscr V}^{(1)})_k\ar[d]^(.65)+\ar[rr]&&({\mathscr V}^{(2)})_k\ar[d]^(.65)+\ar[rr]^-{\sim}&&{\mathscr V}_k\ar[d]^(.65)+\ar[r]^(.65)+&\\
&&&&&
}
\]
Since $k\in J$, the category $u/k$ has a terminal object and, therefore, using (Der.4) and \cite[Lem.\,1.19]{Moritz}, one shows that there is a natural isomorphism $k^*u_!\cong k^*$. Using this and the fact that $k^*$ sends triangles to triangles, we can observe that the central column is a truncation triangle of ${\mathscr Y}_k\cong k^*u_!u^*{\mathscr Y}$ with respect to the $t$-structure $(\mathcal{U}, \Sigma\mathcal{V})$ on $\D(\bbone)$. Since also $k^*{\mathscr V}^{(1)}=0$ (see the first column), we have ${\mathscr U}_k\cong k^*u_!{\mathscr U}_{u^*{\mathscr Y}}\cong U_{{\mathscr Y}_k}\in \U$ and ${\mathscr V}_k\cong {\mathscr V}^{(2)}_k\cong V_{{\mathscr Y}_k}\in \V$.

On the other hand, if $k\in I\setminus J$, applying $k^*$ to the above diagram we get the following commutative diagram in $\D(\bbone)$, where rows and columns are distinguished triangles:
\[
\xymatrix@R=10pt{
k^*k_!k^*u_!{\mathscr U}_{u^*{\mathscr Y}}\ar[rr]\ar[dd]&&k^*k_!U_{{\mathscr Y}_k}\sqcup k^*u_!{\mathscr U}_{u^*{\mathscr Y}}\ar[dd]\ar[rr]&&{\mathscr U}_k\ar[dd]\ar[r]^(.65)+&\\ 
\\
k^*k_!k^*u_!u^*{\mathscr Y}\ar[rr]\ar[dd]&&k^*k_!{\mathscr Y}_k\sqcup k^*u_!u^*{\mathscr Y}\ar[rr]\ar[dd]&&{\mathscr Y}_k\ar[dd]\ar[r]^(.65)+&\\
\\
({\mathscr V}^{(1)})_k\ar[d]^(.65)+\ar[rr]&&({\mathscr V}^{(2)})_k\ar[d]^(.65)+\ar[rr]&&{\mathscr V}_k\ar[d]^(.65)+\ar[r]^(.65)+&\\
&&&&&
}
\]
The components $k^*k_!k^*u_!{\mathscr U}_{u^*{\mathscr Y}}\to k^*u_!{\mathscr U}_{u^*{\mathscr Y}}$ and $k^*k_!k^*u_!u^*{\mathscr Y}\to k^*u_!u^*{\mathscr Y}$ are isomorphisms by Lemma \ref{easy_relations_for_u_and_k} (since $k\colon \bbone \to I$ is fully faithful), so the  first maps in each of the first two rows is a split monomorphism. Hence, the first two rows are split triangles and, therefore, the map ${\mathscr U}_k\to {\mathscr Y}_k$ is isomorphic to $k^*k_!U_{{\mathscr Y}_k}\to k^*k_!{\mathscr Y}_k$. Furthermore, by Example \ref{coh_incoh_rest_ext_example} (and the fact that $|I(k,k)|=1$), there is a natural isomorphism $k^*k_!\Rightarrow \id_{\D(\bbone)}$, therefore $k^*k_!U_{{\mathscr Y}_k}\cong U_{{\mathscr Y}_k}\in \U$ and $k^*k_!V_{{\mathscr Y}_k}\cong V_{{\mathscr Y}_k}\in \V$, showing that $k^*k_!U_{{\mathscr Y}_k}\to k^*k_!{\mathscr Y}_k$ (and, therefore, also ${\mathscr U}_k\to {\mathscr Y}_k$) is a coreflection onto $\U$. Hence, ${\mathscr V}_k\in \V$ as desired.
\end{proof}

\begin{lemma}\label{lift_tstructure_countable_lem}
In the same setting as above, $\t_I=(\U_I,\Sigma \V_I)$ is a $t$-structure in $\D(I)$, for any small category of countable length $I$.
\end{lemma}
\begin{proof}
As in the proof of Lemma \ref{lifting_tstructure_finite_lem}, we have just to verify ($t$-S.3).
Indeed, take $\mathscr X\in \D(I)$ and let us construct a suitable truncation triangle with respect to $\t_I$. Fix Notation \ref{notation_countable_cat}, then,
by Lemma \ref{lifting_tstructure_finite_lem}, for any $n\in\N_{>0}$, there is a triangle
\[
\xymatrix{
{\mathscr U}^n\ar[r]^{\varphi_n}&\iota_n^*{\mathscr X}\ar[r]& {\mathscr V}^n\ar[r]& \Sigma{\mathscr  U}^n
}
\]
with ${\mathscr U}^n\in\U_{I_{\leq n}}$ and ${\mathscr V}^n\in \V_{I_{\leq n}}$ 
Applying $(\iota_n)_!$ we obtain a triangle in $\D(I)$:
\begin{equation}\label{trun_tria_eq_lem_count}
\xymatrix@C=45pt{
(\iota_n)_!{\mathscr U}^n\ar[r]^-{(\iota_n)_!(\varphi_n)}& (\iota_n)_!\iota_n^*{\mathscr X}\ar[r]& (\iota_n)_!{\mathscr V}^n\ar[r]& \Sigma (\iota_n)_!{\mathscr U}^n.
}
\end{equation}
Since $(\iota_n)_!$ is an extension by $0$, we have that $(\iota_n)_!{\mathscr U}^n\in \U_I$ and $(\iota_n)_!{\mathscr V}^n\in \V_I$, so  the triangle in \eqref{trun_tria_eq_lem_count} is a truncation triangle with respect to $\t_I$ and, as a consequence, $(\iota_n)_!(\varphi_n)$ is a coreflection of $ (\iota_n)_!\iota_n^*{\mathscr X}$ onto $\U_I$. Consider the following solid diagram:
\[
\xymatrix@C=50pt@R=35pt{
(\iota_1)_!{\mathscr U}^1\ar[d]|{(\iota_1)_!(\varphi_1)}\ar@{.>}[r]^{\exists!\ \partial^1_{\mathscr U}}&(\iota_2)_!{\mathscr U}^2\ar[d]|{(\iota_2)_!(\varphi_2)}\ar@{.>}[r]^{\exists!\ \partial^2_{\mathscr U}}&\ldots\ar@{.>}[r]&(\iota_n)_!{\mathscr U}^n\ar[d]|{(\iota_n)_!(\varphi_n)}\ar@{.>}[r]^{\exists!\ \partial^n_{\mathscr U}}&\ldots\\
(\iota_1)_!\iota_1^*{\mathscr X}\ar[r]^{\partial^1}&(\iota_2)_!\iota_2^*{\mathscr X}\ar[r]^{\partial^2}&\ldots\ar[r]&(\iota_n)_!\iota_n^*{\mathscr X}\ar[r]^{\partial^n}&\ldots
}
\]
Given $n\in \N_{>0}$, since $(\iota_{n+1})_!(\varphi_{n+1})$ is a coreflection onto $\U_I$ and $(\iota_n)_!{\mathscr U}^n\in \U_I$, there is a unique map $\partial^n_{\mathscr U}\colon (\iota_n)_!{\mathscr U}^n\to (\iota_{n+1})_!{\mathscr U}^{n+1}$ such that $(\iota_{n+1})_!(\varphi_{n+1})\circ \partial^n_{\mathscr U}=\partial^n\circ (\iota_{n})_!(\varphi_{n})$.

Take now $k\in I$ and note that, by Lemma \ref{easy_relations_for_u_and_k}, $k^*(\epsilon_n)$ is invertible for all $n\geq d(k)$. Furthermore, $k^*(\epsilon_{n+1})\circ k^*( \partial^n)=k^*(\epsilon_n)$ by Lemma \ref{ed=e_lemma} and, therefore, $k^{*}(\partial^n)$ is an isomorphism for all $n\geq d(k)$. Similarly, $k^*(\partial^n_{\mathscr U})$ is an isomorphism for all $n\geq d(k)$, since $k^*(\partial^n_{\mathscr U})$ is a coreflection onto $\U$ of the isomorphism $k^{*}(\partial^n)$. Taking the cones of the vertical maps in the above diagram, we obtain a sequence like the following one:
\[
\xymatrix@C=40pt{
(\iota_1)_!{\mathscr V}^1\ar[r]^{\partial_{\mathscr V}^1}&(\iota_2)_!{\mathscr V}^2\ar[r]^{\partial_{\mathscr V}^2}&\ldots\ar[r]&(\iota_n)_!{\mathscr V}^3\ar[r]^{\partial_{\mathscr V}^n}&\ldots.
}
\]
Given $k\in I$, also $k^*(\partial_{\mathscr V}^n)$ is an isomorphism for all $n\geq d(k)$. To see this just consider the morphism of triangles $(k^*(\partial_{\mathscr U}^n),k^*(\partial^n),k^*(\partial_{\mathscr V}^n))$: as the first two components are invertible for  $n\geq d(k)$, then so has to be the third. We can now conclude by taking the Milnor colimits of these three sequences
(i.e.\ the rightmost terms in triangles as in Lemma~\ref{dist_triangle_countable_length})
to get the following triangle:
\[
{\mathscr U}\to {\mathscr X}\to {\mathscr V}\to \Sigma {\mathscr U}
\]
with ${\mathscr U}\in \U_I$, as ${\mathscr U}_k\cong ({\mathscr U}^{d(k)})_k\in \U$, and ${\mathscr V}\in \V_I$, as ${\mathscr V}_k\cong ({\mathscr V}^{d(k)})_k\in \V$, for all $k\in I$, by Example~\ref{countable_splitexact_ex}(2).
\end{proof}

Now we can prove Theorem~\ref{mainthm.lift_tstructure_general_thm} as stated in the Introduction. That is, $\t_I=(\U_I,\Sigma \V_I)$ is a $t$-structure in $\D(I)$ for any small category $I$, and the functor $\dia_I\colon \D(I)\to \D(\bbone)^I$ induces an equivalence of categories $\H_{I}\cong \H^I$.

\begin{proof}[Proof of Theorem~\ref{mainthm.lift_tstructure_general_thm}]
As in the proof of Lemmas \ref{lifting_tstructure_finite_lem} and \ref{lift_tstructure_countable_lem}, we have just to verify ($t$-S.3). Indeed, take $\mathscr X\in \D(I)$ and let us construct a suitable truncation triangle with respect to $\t_I$. 
As in Subsection~\ref{homotopical_epi_subs}, we consider the homotopical epimorphism $u\colon \Delta(I)\to I$. By Lemma  \ref{lift_tstructure_countable_lem},  $\t_{\Delta(I)}$ is a \mbox{$t$-structure} on $\D(\Delta(I))$, so we can take the following truncation triangle in $\D(\Delta(I))$:
\begin{equation}\label{truncation_in_delta_eq}
{\mathscr U}_{u^*{\mathscr X}}\to u^*{\mathscr X}\to {\mathscr V}_{u^*{\mathscr X}}\to \Sigma {\mathscr U}_{u^*{\mathscr X}}.
\end{equation}
We have to verify that ${\mathscr U}_{u^*{\mathscr X}}$ and ${\mathscr V}_{u^*{\mathscr X}}$ belong to the essential image of $u^*$. For that, we need to prove that ${\mathscr U}_{u^*{\mathscr X}}$ and ${\mathscr V}_{u^*{\mathscr X}}$ are constant on fibers. Indeed, let $i\in I$ and consider the following diagram in $\D(u^{-1}(i))$:
\[
\xymatrix{
\iota_i^*{\mathscr U}_{u^*{\mathscr X}}\ar[r]& \iota_i^*u^*{\mathscr X}\ar[r]\ar@{=}[d] &\iota_i^*{\mathscr V}_{u^*{\mathscr X}}\ar[r] &\Sigma \iota_i^*{\mathscr U}_{u^*{\mathscr X}}.\\
\pt_{u^{-1}(i)}^*{\mathscr U}_{{\mathscr X}_i}\ar[r]&\pt_{u^{-1}(i)}^*{\mathscr X}_i\ar[r]&\pt_{u^{-1}(i)}^*{\mathscr V}_{{\mathscr X}_i}\ar[r]&\pt_{u^{-1}(i)}^*\Sigma {\mathscr U}_{{\mathscr X}_i}
}
\]
Both the first and the second line are truncation triangles for $\iota_i^*u^*{\mathscr X}= \pt_{u^{-1}(i)}^*{\mathscr X}_i$ in $\D(u^{-1}(i))$ with respect to the $t$-structure $\t_{u^{-1}(i)}$. By the uniqueness of  truncations, $\iota_i^*{\mathscr U}_{u^*{\mathscr X}}\cong \pt_{u^{-1}(i)}^*{\mathscr U}_{{\mathscr X}_i}$ and $\iota_i^*{\mathscr V}_{u^*{\mathscr X}}\cong \pt_{u^{-1}(i)}^*{\mathscr V}_{{\mathscr X}_i}$, so ${\mathscr U}_{u^*{\mathscr X}}$ and ${\mathscr V}_{u^*{\mathscr X}}$ are constant on fibers. Thus, the triangle \eqref{truncation_in_delta_eq} lies in the essential image of $u^*$ and, therefore, we can lift it back along $u^*$ to obtain the desired truncation triangle of $\mathscr X$ in $\D(I)$.

Finally, note that the functor $\H_I\to \H^I$ induced by $\dia_I$ is fully faithful and essentially surjective by Theorem \ref{main_lifting_theorem}.
\end{proof}

\section{Finiteness conditions for objects in the base}

Let us start by fixing a strong and stable derivator 
\[
\D\colon \cat^{\op}\to \Cat.
\] 
In the first part of the section we recall what a closed monoidal derivator $\mathbb V$ is, and what it means for $\D$ to have a closed action by such a $\mathbb V$. We then introduce the class of homotopically finitely presented objects in $\D(\bbone)$, relative to a specific closed action on $\D$. In the second part of the section we introduce an intrinsic version of homotopic finite presentability for objects in $\D(\bbone)$, where by ``intrinsic'' we mean that this notion does not depend on the choice of a specific action, as it just uses data from $\D$ itself. We conclude the section by proving that a given object in $\D(\bbone)$ is compact (in the usual sense of triangulated categories with coproducts) if, and only if, it is intrinsically homotopically finitely presented if, and only if, it is homotopically finitely presented with respect to any closed action on $\D$ by a closed monoidal derivator $\mathbb V$, whose tensor unit is a compact generator. 

%
%

\subsection{Closed actions by a closed monoidal derivator}

Following \cite[Def.\,3.1]{GPS-additivity}, a {\em monoidal structure} on a prederivator $\mathbb{V}$ is given by
\begin{itemize}
\item a tensor product $\otimes\colon \mathbb{V}\times\mathbb{V}\to\mathbb{V}$ and
\item a tensor unit $\mathbb{S}\colon\mathbb{Y}_\bbone\to\mathbb{V}$ (here $\mathbb{Y}_\bbone$ stands for the terminal derivator, i.e.\ $\mathbb{Y}_\bbone(I)=\bbone$ for each $I\in\Cat$, cf.\ Examples~\ref{expl.Yoneda_prederivator} and~\ref{expl.Yoneda_derivator}),
\end{itemize}
together with the usual unitality, associativity and symmetry constraints in the $2$-category of prederivators. One can restrict this definition to derivators and, after some tedious work, arrive at the definition of a {\em closed monoidal derivator} $\mathbb{V}$, see \cite[Def.\,8.5]{GPS-additivity}. We will further say that $\mathbb{V}$ is \emph{monogenic} (cf.\ \cite[Def.\ 1.1.4]{hps-axiomatic-stable-htpy-theory}) when it is further a strong and stable derivator, and the image of the unique object by the functor $\mathbb{S}:\mathbb{Y}_1(\bbone )=\bbone\to\mathbb{V}(\bbone)$, that we still denote by $\mathbb{S}$, compactly generates $\mathbb{V}(\bbone )$.

Given a closed monoidal derivator $\mathbb{V}$ we have, for each $I,\, J\in\cat$, the functor of {\em internal hom-objects}
\begin{align*}
\rhd\colon \mathbb{V}(I)^{\op}\times\mathbb{V}(J) &\longrightarrow \mathbb{V}(I^{\op}\times J)\\
(\mathscr X,\mathscr Y)&\longmapsto \mathscr X\rhd\mathscr Y.
\end{align*}
%
The situation further generalizes as follows: if $\mathbb{V}$ is a closed monoidal derivator, we say that $\D$ {\em has a closed $\mathbb{V}$-action} if there is a morphism $\otimes\colon \mathbb{V}\times\D\to\D$ together with associativity and unitality constraints in the $2$-category of derivators which is at the same time a \mbox{2-variable} adjoint in the sense of~\cite[Def.\,8.1]{GPS-additivity}. In that case, \cite[Thm.\,9.1]{GPS-additivity} provides us again with an {\em internal hom-functor} for each $I,\, J\in\cat$,
\[ \rhd\colon \D(I)^{\op}\times\D(J) \longrightarrow \mathbb{V}(I^{\op}\times J). \]
Moreover, if we specialize to $I=\bbone$, these bifunctors, for any fixed  $C\in\D(\bbone)$, naturally assemble to give a morphism of derivators
\begin{equation} \label{eq.V-enrichment}
C\rhd-\colon \D \longrightarrow \mathbb{V}.
\end{equation}
Crucial for our purposes is the fact that, in this last situation, for all $V\in\mathbb{V}(\bbone)$ and $X,\,Y\in\D(\bbone)$, we have an adjunction isomorphism,  which is functorial in $V$, $X$ and $Y$: 
\[
\D(\bbone)(V\otimes X,Y)\cong\mathbb{V}(\bbone)(V,X\rhd Y).
\]
Our first example of this situation 
is aimed at algebraically minded readers:

\begin{example} \label{ex.DG acted by DAb}
Let $\G$ be a Grothendieck Abelian category. Then, the derived category $\Der(\G)$ is enriched over $\Der(\Ab)$ (we have $\mathbf{R}\Hom_\G\colon \Der(\G)^{\op} \times \Der(\G) \to \Der(\Ab)$). Similarly, the canonical derivator $\D_\G$ enhancing $\Der(\G)$ (see Example~\ref{expl.derived categories}) is enriched over $\D_\Ab$ and from this one gets an internal hom bifunctor %
\[ 
-\rhd-=\mathbf{R}\Hom_\G\colon \D_\G(I)^{\op}\times\D_\G(J) \longrightarrow \D_\Ab(I^{\op}\times J), 
\]
Note that $\D_\Ab$ is strong and stable, and it is also closed monoidal with respect to usual derived tensor product. Furthermore, the  unit object $\mathbb{S}\in\D_\Ab(\bbone)=\Der (\Ab)$ is precisely $\mathbb{Z}$, viewed as a stalk complex at zero, so that $\D_\Ab$ is monogenic. For $I=\bbone$ and  $C\in\D_\G(\bbone)=\Der(\G)$, we have the obvious morphism of derivators
\begin{equation} \label{eq.Ab-enrichment}
C\rhd -=\mathbf{R}\Hom_\G(C,-)\colon \D_\G \longrightarrow \D_\Ab.
\end{equation}
\end{example}

 We next remind the reader that the situation  applies to any strong and stable derivator:
 
 \begin{example} \label{expl.cisinski-tabuada enrichment}
Let $\Sp$ be a closed monoidal model category of spectra and $\D_\Sp$ be the corresponding strong stable derivator of spectra. More concretely, there are various mutually Quillen equivalent stable closed monoidal model categories of spectra which we can choose for $\Sp$, see \cite{EKMM-model-for-spectra,HSS-symmetric-spectra,MMSS-monoidal-spectra-comparison,schwede2007untitled}, and we pass to the corresponding homotopy strong stable derivator (see Example~\ref{expl.derived categories}).
This will be a closed monoidal derivator by \cite[Thm.\,9.13]{GPS-additivity}.  Moreover, its unit object $\mathbb{S}$ is the sphere spectrum (see \cite[Thm.\,7.13]{schwede2007untitled}), which is  compact in $\D_\Sp (\bbone)=\text{Ho}(\Sp)$ (see our Appendix~\hyperref[sec_spectra]{A}). 
For the sake of completeness, we also remark that the  Bousfield-Friedlander stable model category of spectra studied in Appendix~\hyperref[sec_spectra]{A} is not suitable to derive the monoidal structure in $\D_\Sp$---although it is also Quillen equivalent to the others, it does \emph{not} have the required structure of a monoidal model category.

Now, given any strong and stable derivator, the discussion in \cite[Appendix A.3]{CisTab11} shows that there is a canonical closed action $\otimes\colon \D_\Sp \times \D \to \D$. This in particular implies  that for each $C \in \D(\bbone)$, we have a morphism of derivators
\[ C\rhd -= F(C,-)\colon \D \longrightarrow \D_\Sp, \]
called the {\em function spectrum}, which is right adjoint (internal to the $2$-category of derivators) to $-\otimes C\colon \D_\Sp \to \D$.
\end{example}

\subsection{Homotopically finite presentability relative to a closed action}\label{subs_homofpextr}

All through this subsection, we assume that $\D$ is a strong and stable derivator,  $\mathbb{V}$ a closed monoidal, strong and stable derivator, and we fix a closed action:
\[
\otimes\colon\mathbb{V}\times\D\longrightarrow\D,
\] 
Given $C\in\D(\bbone)$, we have corresponding internal hom-functor $C\rhd -\colon\D\to\mathbb{V}$. By construction, $C\rhd -$  is a morphisms of derivators and so it comes equipped with a natural isomorphism
\[
\gamma_u^{C}:=\gamma_u^{C\rhd -}\colon u^*\circ (C\rhd -)\ \tilde\Longrightarrow\ (C\rhd -)\circ u^*,
\]
for each $u\colon J\to I$ in $\cat$. As in Subsection \ref{subs_pred}, one can then construct the following natural transformation:
\[
(\gamma_{u}^C)_!\colon u_!\circ (C\rhd -)\ \Longrightarrow\ (C\rhd -)\circ u_!.
\]  
In particular, when $I=\bbone$ and $u:=\pt_J\colon J\to\bbone$, we get a canonical natural transformation 
\[
(\gamma_{\pt_J}^C)_!\colon \hocolim_J\circ (C\rhd -)\Longrightarrow (C\rhd-)\circ \hocolim_J .
\]
Note that the  homotopy colimit  on the left hand side is taken in $\mathbb{V}$, while that on the right hand side is taken in $\D$. This leads to the following crucial definition:

\begin{definition}
Given a small category $I$, an object $C \in \mathbb{D}(\bbone)$ is said to be \emph{$I$-homotopically finitely presented in $\mathbb{D}$, relative to $\mathbb{V}$}, when the last given natural transformation is an isomorphism, i.e., when the  
canonical map
\[
\hocolim_I (C\rhd\mathscr X)\longrightarrow C\rhd(\hocolim_I \mathscr X)
\]
is an isomorphism for all $\mathscr X\in\mathbb{D}(I)$. 
We say that $C$ is \emph{homotopically finitely presented in $\D$, relative to $\mathbb V$},  when it is $I$-homotopically finitely presented, for any directed set $I$.
\end{definition}

In the algebraic case, we have the following very intuitive interpretation.

\begin{example} \label{ex.homotopicallyfp-algebraic-case}
In the situation of Example \ref{ex.DG acted by DAb}, for each $C\in\D_\G(\bbone)=\Der(\G)$,  the morphism $C\rhd -$ is given by $\mathbf{R}\Hom_\G(C,-)\colon\D_\G(I)=\Der (\G^I)\to\Der (\Ab^I)=\D_{\Ab}(I)$. Saying that $C$ is $I$-homotopically finitely presented in $\D_\G$, relative to $\D_\Ab$, amounts to say that the canonical morphism 
\[
\hocolim_I\, \, \mathbf{R}\Hom_\G(C,\mathscr X)\longrightarrow\mathbf{R}\Hom_\G(C,\, \hocolim_I\mathscr X)
\] 
is an isomorphism, for all $\mathscr X\in\D_\G(I)=\Der (\G^I)$.
\end{example}

In the next lemma we  show that, for certain shapes $I$, the $I$-homotopical finite presentability comes for free. We recall that $I\in\cat$ is \emph{strictly homotopy finite} if its nerve contains only finitely many non-degenerate simplices, \cite[Sec.\,7]{Ponto-Shulman}. In our terminology, this is equivalent to $I$ being of finite length and having finitely many morphisms. A small category is \emph{homotopy finite} if it is equivalent to one that is strictly homotopy finite.

\begin{lemma} \label{lem.finite-htpy-fin-pres-for-free}
Every object $C\in\D(\bbone)$ is $I$-homotopically finitely presented in $\D$, relative to $\mathbb V$, if the small category $I$ is homotopy finite.
\end{lemma}
\begin{proof}
The morphism $C\rhd-\colon\D\to\mathbb{V}$ is exact (=left exact, as we are in the stable setting) since it is right adjoint to $-\otimes C$ (see~\cite[Def.\,3.15 and Coro.\,4.17]{Moritz}). This is enough to conclude since, by \cite[Thm.\,7.1(ii)]{Ponto-Shulman}, every exact morphism between stable derivators preserves homotopy finite colimits.
\end{proof}

We can now give a handy characterization of homotopical finite presentability:

\begin{proposition} \label{prop.Corollary2.4-referee}
For an object $C\in\D(\bbone)$, the following assertions are equivalent:
\begin{enumerate}[\rm (1)]
\item the morphism of derivators $C\rhd -:\D\to\mathbb{V}$ preserves all homotopy colimits;
\item the object $C$ is homotopically finitely presented in $\D$, relative to $\mathbb{V}$;
\item the morphism of derivators $C\rhd -:\D\to\mathbb{V}$ preserves  coproducts.
\end{enumerate}
\end{proposition}
\begin{proof}
The implication ``(1)$\Rightarrow$(2)'' is clear, and ``(2)$\Rightarrow$(3)'' follows by Lemma \ref{lem.referee2.3}. Finally, the implication  ``(3)$\Rightarrow$(1)'' follows since $C\rhd -:\D\to\mathbb{V}$ is an exact morphism of derivators, and so it preserves homotopy pushouts. By \cite[Thm.\,7.13]{Ponto-Shulman}, a morphism of derivators that preserves coproducts and homotopy pushouts preserves all homotopy colimits. 
\end{proof}

\subsection{Intrinsic definition of homotopical finite presentability}\label{subs_homofpintr}

In this subsection we present an alternative definition of homotopically finitely presented object in $\D(\bbone)$, that depends only on $\D$, and not on any action by a closed monoidal derivator. 
Indeed, given a small category $I$, consider the following adjunction:
\[
\hocolim_I\adjunct{\eta_I}{\epsilon_I}\pt_I^*\colon \D(I)\longrightarrow \D(\bbone).
\] 
Recall also that, by Example \ref{description_constant_diagram}, $\dia_I\circ \pt_I^*=\kappa_I\colon \D(\bbone)\to \D(\bbone)^I$ is the constant diagram functor, and so we get the following natural transformation 
\[
\dia_I(\eta_I)\colon\dia_I\Longrightarrow \kappa_I\circ\hocolim_I\colon \D(I)\longrightarrow \D(\bbone)^I.
\] 
In turn, this yields the following canonical morphism of Abelian groups: 
\[
\mu_{C,\mathscr X}\colon\colim_I\D(\bbone)(C,\mathscr X_{i})\longrightarrow\D(\bbone)(C,\hocolim_I\mathscr X),
\] 
for any $C\in\D(\bbone)$ and $\mathscr X\in\D(I)$, which is natural in both variables. 

\begin{definition} \label{def.intrinsic-homotopically-fp}
An object $C\in\D(\bbone)$ is \emph{intrinsically homotopically finitely presented in $\D$} when, for every directed set $I$, the morphism 
\[
\xymatrix{
{\mu}_{C,\mathscr X}\colon\varinjlim_I\D(\bbone)(C,\mathscr X_i)\longrightarrow\D(\bbone)(C,\hocolim_I\mathscr X)
}
\] 
is an isomorphism, for all $\mathscr X\in\D(I)$.
\end{definition}



The following result will be crucial in the proof of Theorem \ref{thm.compactness-versus-homtopicallyfp}.

\begin{lemma} \label{lem.homopcompact-implies-compact}
If an object $C\in\D(\bbone)$ is homotopically finitely presented in $\D$, then it is also compact in $\D(\bbone)$. 
\end{lemma} 
\begin{proof}
Let $I$ be a set and adopt the notation of the proof of Lemma \ref{lem.referee2.3}, that is, $I$ is a set and $P:=\mathcal P^{<\omega}(I)$ is the directed set of finite subsets of $I$, with $u\colon I\to P$ the obvious inclusion.
Given $F\in P$, there is an equivalence $F\cong u/F$ and, by (Der.4), we get the following isomorphism
\[
\hocolim_{F}\mathscr X_{\restriction F}\cong \hocolim_{u/F}\, \pr_F^* \mathscr X\tilde\longrightarrow (u_!\mathscr X)_F
\]
for each $\mathscr X\in \D(I)$. We can conclude by the following series of isomorphisms:
\begin{align*}
\textstyle{\coprod_I}\D(\bbone)(C,X_i)&\cong\textstyle{\varinjlim_{P}\ \coprod_F}\ \D(\bbone)(C,\mathscr X_i)\\
&\cong\textstyle{\varinjlim_{P}}\ \D(\bbone)(C,(u_!\mathscr X)_F)\\
& \cong{\D(\bbone)}(C,\hocolim_P(u_!\mathscr X))&\text{as $P$ is homo.\,f.p.};\\
&\cong\D(\bbone)(C,(\pt_I)_!\mathscr X)&\text{as $\pt_I=\pt_P\circ u$};\\
&\cong\D(\bbone)\left(C,\textstyle{\coprod_I}X_i\right)&\text{as $I$ is discrete.}&\qedhere
\end{align*}
\end{proof}

\subsection{Compactness versus homotopical finite presentability}

In this final subsection we show that the two versions (extrinsic and intrinsic) of homotopical finite presentability coincide provided that, in the extrinsic case, the $\otimes$-unit of the given closed action, is a compact {generator}. Furthermore, both versions of  homotopical finite presentability are equivalent to compactness, in the usual sense of triangulated categories with coproducts.
This generalizes an analogous result obtained in~\cite[Prop.\ 6.6]{stovicek2014derived} for the special case of $\D=\D_R$, the canonical derivator enhancing $\Der(\mod R)$ for a ring $R$ (cf.\ Example~\ref{expl.derived categories}), and $\mathbb V=\D_\Ab$.

\begin{theorem} \label{thm.compactness-versus-homtopicallyfp}
Let $\mathbb{V}$ be a closed monoidal derivator which is monogenic, and $\D$ a stable derivator with a closed $\mathbb{V}$-action. The following assertions are equivalent for $C\in\D(\bbone)$:
\begin{enumerate}[\rm (1)]
\item $C$ is intrinsically homotopically finitely presented in $\D$;
\item $C$ is compact in $\D(\bbone)$, i.e.,  the functor $\D(\bbone)(C,-)\colon\D(\bbone)\to\Ab$ preserves coproducts;
\item the morphism of derivators $C\rhd -\colon\D\to\mathbb{V}$ preserves coproducts;
\item $C$ is homotopically finitely presented in $\D$, relative to $\mathbb{V}$;
\item the morphism of derivators $C\rhd -\colon\D\to\mathbb{V}$ preserves all homotopy colimits. 
\end{enumerate}
\end{theorem}
\begin{proof}
The equivalence of assertions (3--5) is Proposition \ref{prop.Corollary2.4-referee} and the implication ``(1)$\Rightarrow$(2)'' is Lemma \ref{lem.homopcompact-implies-compact}.  

\smallskip\noindent
(2)$\Leftrightarrow$(3). By (Der.1), assertion (3) holds if the following functor preserves coproducts:
\[
C\rhd -\colon\D(\bbone)\longrightarrow\mathbb{V}(\bbone).
\] 
Let then $I$ be a set, $\mathscr X=(X_i)_{I}\in \D(I)\cong\D(\bbone)^I$, and consider the canonical morphism 
\[
\xymatrix{\lambda =(\gamma_{\pt_I}^C)_! \colon\coprod_{I}(C\rhd X_i)\longrightarrow C\rhd (\coprod_{ I}X_i)}\qquad \text{in $\mathbb{V}(1)$,}
\]  
as in Subsection \ref{subs_homofpextr}. Furthermore, consider the canonical morphism 
\[
\xymatrix{\mu=\mu_{\Sigma^nC,\mathscr X}\colon\coprod_{I}\D(\bbone)(\Sigma^n C,X_i)\longrightarrow\D(\bbone)(\Sigma^n C,\coprod_{I}X_i)}\qquad\text{in $\Ab$,}
\] 
as in Subsection \ref{subs_homofpintr}. Then, assertion (3) holds if, and only if, $\lambda$ is an isomorphism for all $\mathscr X\in \D(I)$, and assertion (2) holds if, and only if, $\mu$ is an isomorphism, for all $\mathscr X\in \D(I)$ and all $n\in\mathbb{Z}$. In turn, since the unit $\mathbb{S}$ is a compact generator of $\mathbb{V}(\bbone)$, we know that $\lambda$ is an isomorphism if, and only if, the  map 
\[
\xymatrix{\lambda_*\colon\mathbb{V}(\bbone)(\Sigma^n\mathbb{S},\coprod_{I}(C\rhd X_i))\longrightarrow\mathbb{V}(\bbone)(\Sigma^n\mathbb{S},C\rhd (\coprod_{I}X_i))}
\] 
is an isomorphism, for all $n\in\mathbb{Z}$. Now, to see that $\mu$ is an isomorphism if, and only if, $\lambda_*$ is an isomorphism, we consider the following commutative diagram, where all vertical arrows are isomorphisms:
\[
\xymatrix@R=15pt{%
	\mathbb{V}(\mathbf{1})(\Sigma^n\mathbb{S},\coprod_{ I}(C\rhd X_i)) \ar[r]^-{\lambda_\ast} &
		\mathbb{V}(\mathbf{1})(\Sigma^n\mathbb{S},C\rhd (\coprod_{I}X_i)) \cr
	\coprod_{I}\mathbb{V}(\mathbf{1})(\Sigma^n\mathbb{S},C\rhd X_i) \ar[u]^-{\cong} &
		\mathbb{D}(\bbone)(\Sigma^n\mathbb{S}\otimes C,\coprod_{I}X_i) \ar[u]_-{\cong} \cr
	\coprod_{I}\mathbb{D}(\bbone)(\Sigma^n\mathbb{S}\otimes C,X_i) \ar[u]^-{\cong} &
		\mathbb{D}(\bbone)(\Sigma^nC,\coprod_{I}X_i) \ar[u]_-{\cong} \cr
	\coprod_{I}\mathbb{D}(\bbone)(\Sigma^nC,X_i) \ar[u]^-{\cong}\ar[ur]_-{\mu}
}
\]
For the invertibility of the vertical arrows, use the natural isomorphism $(\mathbb{S}\otimes -)\cong \id_\D$, the adjunction $(-\otimes C)\vdash (C\rhd -)$ and the compactness of $\mathbb{S}$ in $\mathbb{V}(\bbone)$. Therefore, $\lambda_*$ is an isomorphism if, and only if, so is $\mu$.

\smallskip\noindent
(2)$\Rightarrow$(1). As we now know that assertions (2--5) are all equivalent, for any given closed symmetric monoidal derivator $\mathbb{V}$ which is monogenic and acts on $\D$, we can assume in this implication that $\mathbb{V}=\D_\Sp$ and that $(C\rhd -)=F(C,-)=\D\to\D_\Sp$ is the function spectrum (see Example \ref{expl.cisinski-tabuada enrichment}). In particular,  $C$ is homotopically finitely presented, relative to $\D_\Sp$. Let then $I$ be any directed set and $\mathscr X\in\D(I)$. As in the proof of the implication ``(2)$\Leftrightarrow$(3)'', consider the canonical map $\lambda \colon\hocolim_I(C\rhd\mathscr X)\to C\rhd\hocolim_IX$, and the induced maps
\[
\lambda^*\colon\mathbb{V}(\bbone)(\Sigma^n\mathbb{S},\hocolim_I(C\rhd\mathscr X))\longrightarrow \mathbb{V}(\bbone)(\Sigma^n\mathbb{S}, C\rhd\hocolim_IX)
\] 
for each $n\in\mathbb{Z}$. We are going to verify that $\lambda^*$ is an isomorphism if, and only if, the following canonical map is an isomorphism for all $n\in\Z$:
\[
\xymatrix{{\mu}={\mu}_{\Sigma^n C,\mathscr X}\colon\varinjlim_I\mathcal D(\Sigma^n C,\mathscr X_i)\longrightarrow\mathcal D(\Sigma^n C,\hocolim_I\mathscr X)}.
\] 
In fact, arguing as above, we get a commutative diagram whose columns are isomorphisms:
\[
\xymatrix@R=15pt{%
	\mathbb{V}(\mathbf{1})(\Sigma^n\mathbb{S},\hocolim_I(C\rhd\mathscr{X})) \ar[r]^-{\lambda_\ast} &
		\mathbb{V}(\mathbf{1})(\Sigma^n\mathbb{S},C\rhd\hocolim_I\mathscr{X}) \cr
	\varinjlim_I\mathbb{V}(\mathbf{1})(\Sigma^n\mathbb{S},C\rhd \mathscr{X}_i) \ar[u]^-{\cong} &
		\mathbb{D}(\bbone)(\Sigma^n\mathbb{S}\otimes C,\hocolim_I \mathscr{X}) \ar[u]_-{\cong} \cr
	\varinjlim_I\mathbb{D}(\bbone)(\Sigma^n\mathbb{S}\otimes C,\mathscr{X}_i) \ar[u]^-{\cong} &
		\mathbb{D}(\bbone)(\Sigma^nC,\hocolim_I\mathscr{X}) \ar[u]_-{\cong} \cr
	\varinjlim_I\mathbb{D}(\bbone)(\Sigma^nC,\mathscr{X}_i) \ar[u]^-{\cong}\ar[ur]_-{\mu}
}
\]
where the upper left vertical arrow is an isomorphism because the sphere spectrum $\mathbb{S}$ is intrinsically homotopically finitely presented in $\D_\Sp$ (see Appendix~\hyperref[sec_spectra]{A}).
\end{proof}

As an immediate consequence (see Example  \ref{expl.cisinski-tabuada enrichment}), we get

\begin{corollary}\label{coro_compact=hfp}
Let $\D\colon \cat^{\op}\to \Cat$ be a strong and stable derivator. The following assertions are equivalent for an object $C\in\D(\bbone)$:
\begin{enumerate}[\rm (1)]
\item $C$ is intrinsically homotopically finitely presented in $\D$;
\item $C$ is a compact object of $\D(\bbone)$;
\item $C$ is homotopically finitely presented in $\D$, relative to $\D_\Sp$.
\end{enumerate}
\end{corollary}

\section{Finiteness conditions and directed colimits in the heart}\label{sec_dirlim_heart}

Let us start by fixing a strong and stable derivator 
\[
\D\colon \cat^{\op}\to \Cat.
\] 
In this section we  discuss some finiteness conditions on a $t$-structure $\mathbf{t} = (\U,\Sigma\V)$ in $\D(\bbone)$. The simplest condition which we can impose is that $\V$ is closed under coproducts---this in fact makes sense in any triangulated category. However, it will turn out later in Example~\ref{ex.HRS t-structure smash but non-htpy} that this is not sufficient for the heart to be (Ab.$5$) Abelian. A stronger condition which does imply exactness of directed colimits in the heart is that $\V$ is closed under directed homotopy colimits. This will establish Theorem~\ref{mainthm.htpy smashing has AB5 heart}.

\subsection{Homotopically smashing $t$-structures}

Let us start introducing the finiteness conditions which we are going to study:

\begin{definition} \label{def.homotcompact-homotsmashing}
A $t$-structure $\t =(\U,\Sigma\V)$ in $\D(\bbone)$ is said to be:
\begin{itemize}
\item \emph{compactly generated} when $\V$ is of the form $\V=\mathcal S^{\perp_{\leq 0}}$, where $\cal S\subseteq \D(\bbone)$ is a set of compact objects (see Subsection \ref{tria_and_t_subs});
\item \emph{homotopically smashing} (with respect to $\mathbb{D}$) when $\mathcal{V}$ is closed under taking directed homotopy colimits;
\item \emph{smashing} when $\mathcal{V}$ is closed under taking coproducts. 
\end{itemize}
\end{definition}

The three notions relate as follows.

\begin{proposition} \label{prop.homtopsmashing-implies-smashing}
Let $\t=(\U,\Sigma\V)$ be a $t$-structure in $\D(\bbone)$ and consider the following conditions:
\begin{enumerate}[\textup(1\textup)]
\renewcommand{\theenumi}{\textup{\arabic{enumi}}}
\item $\t$ compactly generated;
\item $\t$ homotopically smashing;
\item $\t$ smashing.
\end{enumerate}
Then, the implications ``(1)$\Rightarrow$(2)$\Rightarrow$(3)'' hold and none of them can be reversed in general.
\end{proposition}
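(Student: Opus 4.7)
The plan is to prove the two implications by reducing each to information we already have in the paper, and then to defer the non-reversibility to the examples the authors promise. The central tool for (1)$\Rightarrow$(2) is Proposition~\ref{prop.cpt = hom fin pres}, which identifies compactness with homotopical finite presentability, and for (2)$\Rightarrow$(3) it is the standard trick of writing an arbitrary coproduct as a directed homotopy colimit of its finite subcoproducts, already used in Lemma~\ref{lem.homopcompact-implies-compact}.

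For (1)$\Rightarrow$(2), I would fix a set $\mathcal{S}$ of compact generators so that $\V=\mathcal{S}^{\perp_{\leq 0}}$, a directed set $I$, and a coherent diagram $\mathscr{X}\in\D(I)$ with $\mathscr{X}_i\in\V$ for every $i\in I$. I have to show $\hocolim_I\mathscr{X}\in\V$, i.e. that $\D(\bbone)(S,\Sigma^k\hocolim_I\mathscr{X})=0$ for each $S\in\mathcal{S}$ and each $k\leq 0$. Since suspensions of compact objects are compact and $\hocolim_I$ commutes with $\Sigma$ (as a triangulated functor), Proposition~\ref{prop.cpt = hom fin pres} allows me to rewrite the left-hand side as $\varinjlim_I\D(\bbone)(\Sigma^{-k}S,\mathscr{X}_i)$, and each term of this directed colimit of abelian groups vanishes because $\mathscr{X}_i\in\V$ and $-k\geq 0$.

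For (2)$\Rightarrow$(3), I would take an arbitrary family $(X_i)_{i\in I}$ of objects in $\V$ and reproduce the construction of Lemma~\ref{lem.homopcompact-implies-compact}: let $\Delta$ be the directed poset of finite subsets of $I$ and $u\colon I\to\Delta$ the functor $i\mapsto\{i\}$, so that $(\pt_\Delta)_!\circ u_!\cong(\pt_I)_!$ and, for each $F\in\Delta$, $(u_!\mathscr{X})_F\cong\coprod_{i\in F}X_i$, where $\mathscr{X}=(X_i)\in\D(I)=\mathcal{D}^I$. Because $\V$ is a full subcategory closed under taking direct summands and extensions (any finite coproduct $X\oplus Y$ fits in a split triangle with both outer terms in $\V$), each finite coproduct $(u_!\mathscr{X})_F$ lies in $\V$. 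Thus $u_!\mathscr{X}\in\V_\Delta$, and the assumption that $\V$ is closed under directed homotopy colimits gives $\coprod_I X_i\cong\hocolim_\Delta(u_!\mathscr{X})\in\V$, as required.

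Finally, for the strictness of both implications, I would simply point to Example~\ref{ex.HRS t-structure smash but non-htpy} (already cited earlier in the paper as witnessing the failure of (3)$\Rightarrow$(2) in a Happel--Reiten--Smal\o{} setup), and invoke a standard instance of a homotopically smashing $t$-structure that is not compactly generated (for instance, a pure-injective cosilting $t$-structure whose cosilting object is not compactly generated) to show (2)$\not\Rightarrow$(1). The only subtle step in the proof itself is the reduction in (2)$\Rightarrow$(3): one must be careful that the finite coproducts really are computed as the pointwise values of $u_!\mathscr{X}$, which is exactly what the (Der.$4$) computation in Lemma~\ref{lem.homopcompact-implies-compact} provides.
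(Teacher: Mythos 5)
Your proof is correct and follows the same route as the paper: (1)$\Rightarrow$(2) is the unpacking of Proposition~\ref{prop.cpt = hom fin pres} that the paper leaves implicit, (2)$\Rightarrow$(3) reuses the $\Delta_I$ construction from Lemma~\ref{lem.homopcompact-implies-compact} exactly as the paper does, and the non-reversibility is deferred to Section~\ref{sec_ex} in both cases. The only cosmetic difference is that the paper summarizes ``co-aisles are always closed under finite coproducts'' where you spell out the split-triangle/extension argument, and the paper's counterexample to (2)$\not\Rightarrow$(1) is specifically Example~\ref{ex.HRS t-structure non-cpt gen} rather than a generic cosilting reference.
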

\begin{proof}
The implication ``(1)$\Rightarrow$(2)'' is a direct consequence of the definition of intrinsically homotopically finitely presented object and of Corollary~\ref{coro_compact=hfp}. As for the implication ``(2)$\Rightarrow$(3)'', let us assume that $\t$ is homotopically smashing and let $\mathscr X=(X_i)_{i\in I}$ be a family of objects of $\mathcal{U}^\perp$. We have seen in the proof of Lemma \ref{lem.homopcompact-implies-compact} that, looking at $I$ as a discrete category,  we can view $\mathscr X$ as an object of $\mathbb{D}(I)$ and then there is a canonical isomorphism $\coprod_{i\in I}X_i\cong \hocolim_P u_!\mathscr X$, with the same notation as in that proof. Then we just need to check that $(u_!\mathscr X)_F\in\mathcal{U}^\perp$, for all finite subsets $F\subseteq I$. But, in fact, $(u_!\mathscr X)_F\cong\coprod_{i\in F}X_i$, which is an object of $\mathcal{U}^\perp$ since co-aisles are closed under finite coproducts.
We refer to  Examples~\ref{ex.HRS t-structure smash but non-htpy} and~\ref{ex.HRS t-structure non-cpt gen} in the next section for explicit counterexamples showing that the implications in the statement cannot be reversed in general.
\end{proof}

\subsection{Directed colimits in the heart}

Here we prove Theorem~\ref{mainthm.htpy smashing has AB5 heart}, as stated in the Introduction, i.e.\ that the heart of a homotopically smashing $t$-structure is an (Ab.$5$) Abelian category. We start with the following consequence of Theorem~\ref{mainthm.lift_tstructure_general_thm}:

\begin{lemma}\label{lem.hocolim=colim}
Let $\t=(\U,\Sigma\V)$ be a $t$-structure in $\D(\bbone)$. Given $I\in\cat$ and $\mathscr X\in \H_I$, 
\[
\colim_I\, \dia_I \mathscr X \cong \tau^{\Sigma\V}(\hocolim_I \mathscr X),
\]
where the  colimit on the left hand side is taken in the heart $\H = \U\cap\Sigma\V$.
\end{lemma}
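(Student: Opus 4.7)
The plan is to prove the claimed isomorphism by Yoneda, i.e.\ by exhibiting, naturally in $H\in\H$, an isomorphism $\H(\tau^{\Sigma\V}(\hocolim_I \mathscr X),H)\cong \H(\colim_I \dia_I\mathscr X, H)$. As a preliminary step, I would verify that $\tau^{\Sigma\V}(\hocolim_I \mathscr X)$ actually lies in $\H$. Since $\mathscr X_i\in\H\subseteq\U$ for every $i\in I$, Proposition~\ref{prop.aisles closed for hocolim} gives $\hocolim_I \mathscr X\in\U$. The truncation triangle
\[
\tau_\U(\hocolim_I \mathscr X)\to \hocolim_I \mathscr X\to \tau^{\Sigma\V}(\hocolim_I \mathscr X)\overset{+}{\to}
\]
then shows that $\tau^{\Sigma\V}(\hocolim_I \mathscr X)\in\U$ as well, since aisles are closed under extensions, so $\tau^{\Sigma\V}(\hocolim_I \mathscr X)\in \U\cap\Sigma\V=\H$.

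The core of the proof is the following chain of natural isomorphisms for $H\in\H$:
\begin{align*}
\H(\tau^{\Sigma\V}(\hocolim_I \mathscr X),H)
&\cong \D(\bbone)(\hocolim_I \mathscr X, H)\\
&\cong \D(I)(\mathscr X, \pt_I^* H)\\
&\cong \D(\bbone)^I(\dia_I\mathscr X, \kappa_I H)\\
&\cong \H^I(\dia_I\mathscr X, \kappa_I H)\\
&\cong \H(\colim_I \dia_I\mathscr X, H).
\end{align*}
The first isomorphism is the $(\tau^{\Sigma\V},\text{incl.})$-adjunction applied to $H\in\Sigma\V$, the second is the $(\hocolim_I, \pt_I^*)$-adjunction, the fourth is fullness and faithfulness of $\H^I\hookrightarrow \D(\bbone)^I$ combined with the identification $\dia_I \pt_I^* H=\kappa_I H$ from Example~\ref{description_constant_diagram}, and the last is the $(\colim_I,\kappa_I)$-adjunction in the cocomplete Abelian category $\H$ (see~\cite[Prop.\,3.2]{parra2015direct}). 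An application of the Yoneda lemma then yields the claim.

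The main step — and the only non-routine one — is the third isomorphism, which is an instance of Theorem~\ref{main_lifting_theorem}(1) applied to $\mathscr X$ and $\pt_I^* H\in\D(I)$. To invoke it I have to verify the negative-Ext vanishing hypothesis $\D(\bbone)(\Sigma^n \mathscr X_i, H)=0$ for all $i\in I$ and all $n>0$. But for such $n$, using $\Sigma \U\subseteq\U$ and $H\in\Sigma\V$ (equivalently $\Sigma^{-1}H\in\V$), the $t$-structure axiom gives
\[
\D(\bbone)(\Sigma^n \mathscr X_i, H)=\D(\bbone)(\Sigma^{n-1}\mathscr X_i, \Sigma^{-1}H)=0,
\]
since $\Sigma^{n-1}\mathscr X_i\in\U$. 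This completes the argument.
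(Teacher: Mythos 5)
Your proof is correct and follows essentially the same route as the paper's: both arguments show that $\tau^{\Sigma\V}\circ\hocolim_I$ and $\colim_I\circ\dia_I$, restricted to $\H_I$, are left adjoint to $\pt_I^*\restriction_{\H}\colon\H\to\H_I$, and conclude by uniqueness of left adjoints (you phrase this via Yoneda on Hom-sets, the paper via the calculus of adjunctions). The paper invokes Theorem~\ref{mainthm.lift_tstructure_general_thm} to pass through the equivalence $\H_I\simeq\H^I$, whereas you go directly to Theorem~\ref{main_lifting_theorem}(1) and verify its Ext-vanishing hypothesis by hand; since Theorem~\ref{mainthm.lift_tstructure_general_thm} is itself a consequence of Theorem~\ref{main_lifting_theorem}, your version is if anything a bit more self-contained. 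One small slip: the truncation triangle ending in $\tau^{\Sigma\V}(\hocolim_I\mathscr X)$ should have $\tau_{\Sigma\U}(\hocolim_I\mathscr X)$, not $\tau_{\U}(\hocolim_I\mathscr X)$, as its first term (it is the truncation triangle for the shifted $t$-structure $(\Sigma\U,\Sigma^2\V)$); this is harmless here since $\Sigma\U\subseteq\U$ and the extension-closure argument goes through unchanged.
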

\begin{proof}
By  Theorem \ref{mainthm.lift_tstructure_general_thm}, $\dia_I$ induces an equivalence $F_I\colon\H_I\to \H^I$, and we fix a quasi-inverse $F_I^{-1}\colon \H^{I}\to \H_I$. Now, $\colim_I$ is defined as the left adjoint to $\kappa_I\colon \H\to \H^{I}$ so, composing the two adjunctions $\colim_I\dashv \kappa_I$ and $F_I\dashv F_I^{-1}$, we obtain that $\colim_I\circ F_I$ is left adjoint to $F_I^{-1}\circ \kappa_I$. Furthermore, $F_I^{-1}\circ \kappa_I\cong \pt_I^*\restriction_{\H}$.

On the other hand, $(\tau^{\Sigma V})_{\restriction\U}$ is left adjoint to the inclusion $\H\to\U$ and $\hocolim_I$ is left adjoint to $\pt_I^*\colon \D(\bbone)\to \D(I)$. Composing the restrictions to the corresponding subcategories of the two adjunctions, we see that $\tau^{\Sigma V}\circ(\hocolim_I)_{\restriction\H_I}$ is a left adjoint to $(\pt_I^*)_{\restriction\H}\colon\H\to\H_I$.
Thus, we deduce the desired natural isomorphism:
\[
\tau^{\Sigma\V}\circ(\hocolim_I)_{\restriction\H_I}\cong\colim_I\circ (\dia_I)_{\restriction\H_I}.\qedhere
\]
\end{proof}

The above lemma has the following immediate consequence: 

\begin{corollary}\label{cor.hocolim=colim}
If $\t=(\U,\Sigma\V)$ is a homotopically smashing $t$-structure in $\D(\bbone)$, $I$ is a directed set and $\mathscr X\in \H_I$, then $\varinjlim_I \dia_I \mathscr X \cong\hocolim_I \mathscr X$.
\end{corollary}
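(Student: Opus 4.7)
The plan is to combine the identification from Lemma \ref{lem.hocolim=colim} with the homotopical smashing hypothesis, and show that under these hypotheses the left truncation $\tau^{\Sigma\V}$ acts as the identity on $\hocolim_I\mathscr X$. Recall that Lemma \ref{lem.hocolim=colim} already gives
\[
\colim_I\,\dia_I\mathscr X \;\cong\; \tau^{\Sigma\V}(\hocolim_I\mathscr X),
\]
and for a directed set $I$ the categorical colimit $\colim_I$ coincides with $\varinjlim_I$. Hence the only thing to verify is that $\hocolim_I\mathscr X$ already lies in the coaisle $\Sigma\V$, since in that case the truncation triangle of $\hocolim_I\mathscr X$ is $0\to \hocolim_I\mathscr X \to \hocolim_I\mathscr X \to 0$ and $\tau^{\Sigma\V}$ acts as the identity.

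To see that $\hocolim_I\mathscr X\in\Sigma\V$, I would unwind what $\mathscr X\in\H_I$ says pointwise: by Theorem~\ref{mainthm.lift_tstructure_general_thm}, $\mathscr X_i\in\H=\U\cap\Sigma\V$ for every $i\in I$; in particular, $\Sigma^{-1}\mathscr X_i\in\V$ for all $i\in I$. Now I would use that the suspension functor $\Sigma$ lifts to an auto-equivalence of the derivator $\D$ (it is a triangulated equivalence commuting with all homotopy Kan extensions), so that $\Sigma^{-1}\hocolim_I\mathscr X\cong\hocolim_I(\Sigma^{-1}\mathscr X)$. By the homotopical smashing assumption, $\V$ is closed under directed homotopy colimits, so $\hocolim_I(\Sigma^{-1}\mathscr X)\in\V$, and consequently $\hocolim_I\mathscr X\in\Sigma\V$, as required.

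Combining both steps yields the desired chain of isomorphisms
\[
\varinjlim_I\dia_I\mathscr X \;\cong\; \tau^{\Sigma\V}(\hocolim_I\mathscr X) \;\cong\; \hocolim_I\mathscr X.
\]
There is no real obstacle here; the content is essentially bookkeeping, since Lemma~\ref{lem.hocolim=colim} already does the adjunction-theoretic work and Definition~\ref{def.homotcompact-homotsmashing} supplies exactly the closure property that kills the truncation.
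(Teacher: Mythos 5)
Your argument is correct and is exactly the intended (and essentially unique) derivation: Lemma~\ref{lem.hocolim=colim} gives $\varinjlim_I\dia_I\mathscr X\cong\tau^{\Sigma\V}(\hocolim_I\mathscr X)$, and since each $\mathscr X_i\in\H\subseteq\Sigma\V$ and $\Sigma\V$ is closed under directed homotopy colimits (because $\V$ is, and $\Sigma$ commutes with $\hocolim_I$), the truncation $\tau^{\Sigma\V}$ acts as the identity on $\hocolim_I\mathscr X$. The paper leaves this as an immediate consequence without spelling out the details; your write-up fills in precisely those details.
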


We can now proceed with the proof of Theorem~\ref{mainthm.htpy smashing has AB5 heart}: 

\begin{proof}[Proof of Theorem~\ref{mainthm.htpy smashing has AB5 heart}]
In view of Proposition~\ref{prop.homtopsmashing-implies-smashing}, it remains to prove the (Ab.$5$) condition for the heart of a homotopically smashing $t$-structure $\t=(\U,\Sigma \V)$. That is, given three diagrams $X$, $Y$, and $Z\colon I\to \H$ for some directed set $I$, together with natural transformations $f\colon X\Rightarrow Y$ and $g\colon Y\Rightarrow Z$, such that 
\[
0\longrightarrow X_i\overset{f_i}{\longrightarrow} Y_i\overset{g_i}{\longrightarrow}Z_i\longrightarrow 0
\]
is a short exact sequence in $\H$ for any $i\in I$, then
\[
\xymatrix{
0\longrightarrow \varinjlim_{ I}X_i\overset{}{\longrightarrow} \varinjlim_{I}Y_i\overset{}{\longrightarrow}\varinjlim_{I}Z_i\longrightarrow 0}
\]
is short exact. By Theorem~\ref{mainthm.lift_tstructure_general_thm}, the short exact sequence $0\to X\to Y\to Z\to 0$ in $\H^I$, can be identified with a short exact sequence in $\H_I\subseteq \D(I)$. Remember that a sequence in the heart of a $t$-structure is short exact if and only if it represents a triangle of the ambient category whose three first terms happen to lie in the heart. Hence, there is a map $Z\to \Sigma X$ such that
\[
X\longrightarrow Y\longrightarrow Z\longrightarrow \Sigma X
\]
is a triangle in $\D(I)$. Taking homotopy colimits we get a triangle in $\D(\bbone)$:
\[
\hocolim_I X\longrightarrow \hocolim_I Y\longrightarrow \hocolim_I Z\longrightarrow \Sigma \hocolim_I X.
\]
As $\t$ is homotopically smashing,  $\hocolim_I X$, $\hocolim_I Y$ and $\hocolim_I Z$ belong to $\H$, so the following sequence in $\H$ is short exact:
\[
0\longrightarrow \hocolim_I X\longrightarrow \hocolim_I Y\longrightarrow \hocolim_I Z\longrightarrow 0.
\]
One concludes by Corollary~\ref{cor.hocolim=colim} since $\dia_I X\cong (X_i)_{i\in I}$, and similarly for $Y$ and $Z$.
\end{proof}

\section{Tilted \texorpdfstring{$t$}{t}-structures and examples}\label{sec_ex}

This section is devoted to providing examples of smashing $t$-structures which are not homotopically smashing and of homotopically smashing $t$-structures which are not compactly generated. This will complete the proof of Proposition~\ref{prop.homtopsmashing-implies-smashing}. The strategy is to start with the canonical $t$-structure of a suitable Grothendieck Abelian category, tilt it using a torsion pair (see Example~\ref{expl.tilted t-str}), and relate the properties of the resulting tilted $t$-structure to the properties of the torsion pair. Throughout the section, 
\[
\D\colon \cat^{\op}\to \Cat
\]
will be a fixed, strong and stable derivator.

\subsection{Homotopically smashing tilts of $t$-structures}

Our first result characterizes the situation when the Happel-Reiten-Smal{\o} tilt $\t_\tau$ of a homotopically smashing $t$-structure $\t$ is homotopically smashing again.

\begin{proposition}\label{prop.homo_smash_iff_colim}
Let  $\t=(\U,\Sigma\V)$ be a homotopically smashing $t$-structure in $\D(\bbone)$, and let $\t_\tau := (\U_\tau,\Sigma\V_\tau)$ be the tilt of this $t$-structure with respect to a torsion pair $\tau=(\mathcal T,\mathcal F)$ in the heart $\H:=\U\cap \Sigma\V$ of $\t$. Then $\t_\tau$ is a smashing $t$-structure. Moreover, the following conditions are equivalent:
\begin{enumerate}[\textup(1\textup)]
\renewcommand{\theenumi}{\textup{\arabic{enumi}}}
\item $\t_\tau$ is homotopically smashing;
\item $\mathcal F$ is closed under taking directed colimits in $\H$. 
\end{enumerate}
\end{proposition}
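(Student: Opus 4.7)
My approach is to handle the unconditional smashing claim first, and then the equivalence $(1)\Leftrightarrow(2)$. For the smashing part, I would invoke Theorem~\ref{mainthm.htpy smashing has AB5 heart} to endow the heart $\H$ with the $(\mathrm{Ab}.5)$ property, and then establish that in any $(\mathrm{Ab}.5)$ Abelian category every torsion-free class is closed under arbitrary coproducts: one writes $\coprod_{i\in I}F_i$ as the filtered colimit, with split-monomorphism transitions, of the finite subcoproducts $\bigoplus_{i\in I'}F_i\in\mathcal F$, and exploits the $(\mathrm{Ab}.5)$ condition to conclude that the torsion subobject of the colimit must vanish. Since $\t$ is already smashing by Proposition~\ref{prop.homtopsmashing-implies-smashing}, taking $\mathcal D$-coproducts of the witnessing triangles $F_i\to X_i\to V_i\to\Sigma F_i$ for $X_i\in\V_\tau=\mathcal F*\V$ then yields $\coprod V_i\in\V$ and $\coprod F_i\in\mathcal F$ (the latter using that $\mathcal D$-coproducts of objects of $\H$ coincide with $\H$-coproducts whenever $\t$ is smashing), whence $\coprod X_i\in\mathcal F*\V=\V_\tau$.

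The equivalence will rest on two key facts. First, a long-exact-sequence argument applied to the defining triangle for an object of $\V_\tau$ gives the cohomological characterization
\[
\V_\tau=\bigl\{X\in\mathcal D\,\bigm|\,H^n_\t(X)=0\text{ for all }n<0,\ H^0_\t(X)\in\mathcal F\bigr\},
\]
which immediately yields $\H\cap\V_\tau=\mathcal F$. Second, I will argue that the homotopical smashing of $\t$ forces $H^0_\t$ to commute with directed homotopy colimits: applying $\hocolim_I$ to the truncation triangle of a coherent directed system with respect to the lifted $t$-structure $\t_I$ produces a new triangle whose aisle part lies in $\U$ (since $\U$ is closed under all homotopy colimits by Proposition~\ref{prop.aisles closed for hocolim}) and whose co-aisle part lies in $\Sigma\V$ (by the homotopical smashing hypothesis), so it is itself a truncation triangle for $\hocolim_I\mathscr X$ with respect to $\t$.

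For $(1)\Rightarrow(2)$, I would start with a directed system $F\colon I\to\H$ taking values in $\mathcal F\subseteq\V_\tau$: Theorem~\ref{mainthm.lift_tstructure_general_thm} supplies a unique coherent lift $\mathscr F\in\H_I$ whose components still lie in $\V_\tau$, and then assumption $(1)$ forces $\hocolim_I\mathscr F\in\V_\tau$, while Corollary~\ref{cor.hocolim=colim} identifies this object with $\varinjlim_I^{\H}F\in\H$, placing $\varinjlim_I^{\H}F\in\H\cap\V_\tau=\mathcal F$. For $(2)\Rightarrow(1)$, given a coherent directed system $\mathscr X\in\D(I)$ with components in $\V_\tau$, combining the cohomological characterization, the commutation of $H^0_\t$ with $\hocolim_I$, and a further application of Corollary~\ref{cor.hocolim=colim} reduces the target $\hocolim_I\mathscr X\in\V_\tau$ to the statement that $\varinjlim_I^{\H}$ of the components of $H^0_\t(\mathscr X)\in\mathcal F$ lies in $\mathcal F$, which is exactly hypothesis $(2)$. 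The hard part of this plan is the smashing assertion, which genuinely exploits the $(\mathrm{Ab}.5)$ structure on $\H$ furnished by Theorem~\ref{mainthm.htpy smashing has AB5 heart} rather than any intrinsic closure property of $\mathcal F$ in an arbitrary abelian category.
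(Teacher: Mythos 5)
Your treatment of the equivalence $(1)\Leftrightarrow(2)$ is correct and largely follows the paper's route, though you organize it around two explicit intermediate facts (the cohomological characterization of $\V_\tau$ and the commutation of $H^0_\t$ with directed homotopy colimits) rather than, as the paper does, arguing directly on the rotated truncation triangle of a lifted directed system; both are fine. One small omission in $(2)\Rightarrow(1)$: after reducing the membership $\hocolim_I\mathscr X\in\V_\tau$ to the cohomological criterion, you must also verify the vanishing of $H^n_\t(\hocolim_I\mathscr X)$ for $n<0$; this is immediate from the homotopical smashing of $\t$ (each $\mathscr X_i\in\V_\tau\subseteq\Sigma\V$, and $\Sigma\V$ is closed under directed homotopy colimits since $\V$ is), but it needs saying.

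The smashing part as you state it has a genuine gap. You write the coproduct $\coprod_I F_i$ as the filtered union of the finite subcoproducts $\coprod_J F_i\in\mathcal F$ and claim that $(\mathrm{Ab}.5)$ then forces the torsion subobject $T$ of $\coprod_I F_i$ to vanish. Read literally, this asserts that the torsion-subobject functor commutes with filtered colimits along a chain of split monomorphisms of torsion-free objects, which is false in general---indeed, closure of $\mathcal F$ under directed colimits is precisely assertion $(2)$, which you are not permitted to assume here. What $(\mathrm{Ab}.5)$ gives you is only $T=\varinjlim_J\bigl(T\cap\coprod_J F_i\bigr)$; since subobjects of torsion objects need not be torsion, the vanishing of each term $T\cap\coprod_J F_i$ requires a further input. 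The paper supplies it by noting that $(\mathrm{Ab}.5)$ (plus completeness of $\H$) makes $\coprod_I F_i\to\prod_I F_i$ a monomorphism, so $\coprod_I F_i$ is a subobject of $\prod_I F_i\in\mathcal F$. An alternative that stays closer to your outline: for each finite $J$ the split epimorphism $r_J\colon\coprod_I F_i\to\coprod_J F_i$ restricts to the zero map on $T$ (a map from a torsion object to a torsion-free one), so $T\subseteq\ker r_J$ and hence $T\cap\coprod_J F_i=0$; the $(\mathrm{Ab}.5)$ computation above then gives $T=0$. Either way, an explicit use of the torsion-free target is needed; the split-mono filtration and $(\mathrm{Ab}.5)$ alone do not suffice.
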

\begin{proof}
Since the heart $\H$ of $\t$ is an (Ab.$5$) Abelian category by Theorem~\ref{mainthm.htpy smashing has AB5 heart}, the canonical map $\coprod_I X_i \to \prod_I X_i$ is a monomorphism for any set $I$ and $(X_i)_I\subseteq \H$. Indeed, the latter map is a directed colimit of the split inclusions $\coprod_J X_i \cong \prod_J X_i \to \prod_I X_i$, where $J$ runs over all finite subsets of $I$. In particular, $\cal F$ is closed under coproducts both in $\H$ and $\D(\bbone)$ and, since $\V$ is closed under coproducts in $\D(\bbone)$ by Proposition~\ref{prop.homtopsmashing-implies-smashing}, so is~$\V_\tau$. Thus, $\t_\tau$ is smashing.

\smallskip\noindent
(1)$\Rightarrow$(2). Let $I$ be a directed set and let $F=(F_i)_{i\in I}$ be a direct system in $\H$, that is, an object in $\H^I$. By Theorem~\ref{mainthm.lift_tstructure_general_thm}, $\H^{I}\cong \H_I\subseteq \D(I)$, so we can identify $F$ with an object in $\H_I$ and, as such, there is an isomorphism
\[
\xymatrix{\hocolim_I F\cong \varinjlim_{I} F_i}
\]
where the colimit on the right-hand side is taken in $\H$ (see Corollary~\ref{cor.hocolim=colim}). Now, if $F_i\in \mathcal{F}$ for each $i\in I$, the fact that $\t_\tau$ is homotopically smashing  tells us that $\hocolim_I F\in \V_\tau$ and so $\hocolim_I F\in \V_\tau\cap \H=\mathcal F$. 

\smallskip\noindent
(2)$\Rightarrow$(1). Let $I$ be a directed set and let $\mathscr Y\in \D(I)$ be such that $\mathscr Y_i\in \V_\tau$ for any $i\in I$. Consider the truncation triangle of $\mathscr Y$ with respect to the lifted $t$-structure $\t_{I}$ in $\D(I)$:
\begin{equation}\label{trunc_tria_lifted_t}
\mathscr U\longrightarrow\mathscr Y\longrightarrow \mathscr V\longrightarrow \Sigma \mathscr U,
\end{equation}
where $\mathscr U_i\in \U_{}$ and $\mathscr V_i\in \V_{}$, for any $i\in I$. For any $i\in I$ we get a triangle in $\D(\bbone)$:
\[
\Sigma^{-1}\mathscr V_i\longrightarrow \mathscr U_i\longrightarrow \mathscr Y_i\longrightarrow \mathscr V_i.
\]
Since $\Sigma^{-1}\mathscr V_i\in \Sigma^{-1}\V \subseteq \Sigma \V$ and $\mathscr Y_i\in \V_\tau\subseteq \Sigma \V$, we get that $\mathscr U_i\in \U\cap \Sigma \V=\H$. On the other hand, $\Sigma^{-1}\mathscr V_i\in \Sigma^{-1}\V\subseteq \Sigma^{-1}\V_\tau\subseteq \V_\tau$, and so $\mathscr U_i\in \V_\tau$. These two observations together give us that $\mathscr U_i\in \H\cap \V_\tau=\mathcal F$. Taking now the homotopy colimit of the triangle in \eqref{trunc_tria_lifted_t}, we get the following triangle in $\D(\bbone)$:
\[
\hocolim_I \mathscr U\longrightarrow \hocolim_I \mathscr Y\longrightarrow \hocolim_I \mathscr V\longrightarrow \Sigma \hocolim_I \mathscr U.
\]
As we know that $\mathscr U_i\in \mathcal F\subseteq \H$ for any $i\in I$, we have $\hocolim_I \mathscr U\cong \varinjlim_{I}\mathscr U_i$ and, by our assumptions, this last directed colimit belongs to $\mathcal F$. We can now conclude by noting that $\hocolim_I \mathscr Y\in \mathcal F*\V=\V_\tau$.
\end{proof}

\begin{example}\label{ex.HRS t-structure smash but non-htpy}
Let $\D_\G\colon \cat^{\op}\to \Cat$ be the canonical derivator enhancing the derived category $\Der(\G)$ of a Grothendieck Abelian category $\G$ (Example~\ref{expl.derived categories}), and let $\t=(\U,\Sigma \V)$ be the canonical $t$-structure in $\Der(\G)$. If $\tau=(\mathcal{T},\mathcal{F})$ is a torsion pair in $\G$ that is not of finite type (that is, $\varinjlim\mathcal{F}\neq\mathcal{F}$), then the Happel-Reiten-Smal{\o} tilt $\t_\tau$ of $\t$ with respect to $\tau$ is smashing but not homotopically smashing.

Explicitly we can take $\G=\Ab$ and for $\mathcal T$ the class of all divisible Abelian groups. In this case, the heart $\H_\tau$ of $\t_\tau$ is not an (Ab.$5$) Abelian category. Indeed, given any prime number $p$, we have in $\H_\tau$ a chain of monomorphisms
\[ \Sigma\Z_p \hookrightarrow \Sigma\Z_{p^2} \hookrightarrow \Sigma\Z_{p^3} \hookrightarrow \ldots \]
whose directed colimit is $\tau^{\Sigma\V_\tau}(\Sigma\Z_{p^\infty}) = 0$ by Lemma~\ref{lem.hocolim=colim}.
\end{example}

\subsection{Compactly generated tilts of $t$-structures}

Now we focus on what conditions a torsion pair must satisfy in order for the corresponding tilted $t$-structure to be compactly generated.
We start with a lemma which says that compact objects in the aisle induce finitely presented objects in the heart (cp.\ \cite[Thm.\,6.7]{stovicek2014derived}).

\begin{lemma} \label{lem.cpt to fin pres in H}
Let $\t=(\U,\Sigma\V)$ be a homotopically smashing $t$-structure in $\D(\bbone)$ and  $C \in \U$. If $C$ is compact in $\D(\bbone)$, then $H_\t^0(C)$ is finitely presented in $\H$.
\end{lemma}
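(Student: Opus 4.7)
The plan is to show directly that the functor $\H(H^0_\t(C),-)$ preserves directed colimits in $\H$. Fix a directed set $I$ and a diagram $(M_i)_{i\in I}$ in $\H$.

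First, I would lift $(M_i)$ along $\dia_I$ to a coherent object $\mathscr M\in\H_I\subseteq\D(I)$ by applying Theorem~\ref{mainthm.lift_tstructure_general_thm}, and then use Corollary~\ref{cor.hocolim=colim} to identify $\varinjlim_I M_i$ computed in $\H$ with $\hocolim_I\mathscr M$ computed in $\cal D$. This is the only place where the homotopically smashing hypothesis enters, and it is essential: it replaces the somewhat opaque directed colimit in $\H$ by the homotopy colimit in $\cal D$, which is the object that a compact $C\in\cal D$ can probe.

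Second, I would reduce $\Hom$-groups out of $H^0_\t(C)$ to $\Hom$-groups out of $C$. Since $C\in\U$, truncating $C$ with respect to the shifted $t$-structure $(\Sigma\U,\Sigma^2\V)$ provides a distinguished triangle
$$
Y\longrightarrow C\longrightarrow H^0_\t(C)\longrightarrow \Sigma Y,
$$
with $Y\in\Sigma\U\subseteq\U$; here the right-hand term is identified with $H^0_\t(C)$ because $\U$ is closed under extensions, so the reflection $\tau^{\Sigma\V}(C)$ stays in $\U$ and thus agrees with $H^0_\t(C)=\tau_\U\tau^{\Sigma\V}(C)$. For any $N\in\H\subseteq\Sigma\V$, axiom $(t$-S.$1)$ gives $\cal D(Y,N)=0$; since $\Sigma^{-1}N\in\V$, the same axiom also yields $\cal D(\Sigma Y,N)\cong\cal D(Y,\Sigma^{-1}N)=0$. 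Applying $\cal D(-,N)$ to the triangle then produces a natural isomorphism $\cal D(H^0_\t(C),N)\cong\cal D(C,N)$.

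Third, Proposition~\ref{prop.cpt = hom fin pres} tells us that the compact object $C$ is homotopically finitely presented, and hence the canonical map $\varinjlim_I \cal D(C,M_i)\to\cal D(C,\hocolim_I\mathscr M)$ is an isomorphism. Chaining the three isomorphisms, and noting that $\H(-,-)$ is the restriction of $\cal D(-,-)$ to $\H$, yields the desired bijection $\varinjlim_I\H(H^0_\t(C),M_i)\cong\H(H^0_\t(C),\varinjlim_I M_i)$. The proof is a three-step assembly of results already available in the paper: no individual step presents a real obstacle, and the only item requiring a small amount of care is the orthogonality vanishing in step two.
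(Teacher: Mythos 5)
Your proof is correct and follows essentially the same route as the paper's: reduce via $\mathcal{D}(H^0_\t(C),N)\cong\mathcal{D}(C,N)$ for $N\in\H$, invoke Proposition~\ref{prop.cpt = hom fin pres} to replace compactness by homotopical finite presentation, and identify $\hocolim_I$ with $\varinjlim_I$ in the heart via Corollary~\ref{cor.hocolim=colim}. The only difference is that you spell out the truncation-triangle argument for the adjunction-type isomorphism $\mathcal{D}(H^0_\t(C),N)\cong\mathcal{D}(C,N)$, which the paper simply asserts as standard.
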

\begin{proof}
As $C$ is homotopically finitely presented by Corollary~\ref{coro_compact=hfp}, we have a canonical isomorphism $\varinjlim_I\D(\bbone)(C, \mathscr X_i)\cong \D(\bbone)(C,\hocolim_I \mathscr X)$ for any directed set $I$ and $\mathscr X\in\D(I)$. Furthermore, if $\mathscr X\in \H_I$, then $\mathcal{H}(H_\t^0(C), \mathscr X_i)\cong\D(\bbone)(C, \mathscr X_i)$ for each $i\in I$ and $\mathcal{H}(H_\t^0(C),\hocolim_I \mathscr X)\cong\D(\bbone)(C,\hocolim_I \mathscr X)$, so the canonical morphism $\varinjlim_I\mathcal{H}(H_\t^0(C), \mathscr X_i)\cong \mathcal{H}(H_\t^0(C),\hocolim_I \mathscr X)$ is invertible as well.
The conclusion follows since $\hocolim_I \mathscr X \cong \varinjlim_I \mathscr X_i$ by Corollary~\ref{cor.hocolim=colim}.
\end{proof}

If the $t$-structure is even compactly generated, a much better description of finitely presented objects in the heart was obtained in~\cite[Thm.\,1.6]{SaorinStovicekTstrViaFunctors}. The proof is more involved and we are not going to discuss it here.

\begin{proposition}[{\cite[Thm.\,1.6]{SaorinStovicekTstrViaFunctors}}] \label{prop.lifting_fp_to_compacts_in_the_aisle}
Let $\t=(\U,\Sigma\V)$ be a compactly generated $t$-structure in a triangulated category $\mathcal D$ with coproducts and let $X\in\H=\U\cap\Sigma\V$. Then $X$ is finitely presented in $\H$ if, and only if, there exists a compact object $C\in\U$ such that $X\cong H_\t^0(C)$.
\end{proposition}

We can now give our criterion for a Happel-Reiten-Smal\o{} tilt of a $t$-structure to be compactly generated.

\begin{proposition}\label{prop.homo_fin_pres_is_fin_pres}
Let $\t=(\U,\Sigma\V)$ be a homotopically smashing $t$-structure in $\D(\bbone)$ and let $\t_\tau =(\mathcal{U}_\tau,\Sigma\V_\tau)$ be the tilt of $\t$ with respect to a torsion pair $\tau = (\cal T,\cal F)$ in the heart $\H=\U\cap\Sigma \V$. Consider the following conditions:
\begin{enumerate}[\textup(1\textup)]
\renewcommand{\theenumi}{\textup{\arabic{enumi}}}
\item $\t_\tau$ is compactly generated,
\item there is a set $\mathcal{S}\subseteq\mathcal{T}$ of finitely presented objects of $\H$ with $\mathcal{F}=\bigcap_{S\in\mathcal{S}}\Ker(\mathcal{H}(S,-))$.
\end{enumerate}
Then, (1) implies (2). If, moreover, $\t$ is compactly generated,
then (1) and (2) are equivalent.
\end{proposition}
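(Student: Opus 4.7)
The key unifying fact is the natural isomorphism
\[
\mathcal D(C, Y) \;\cong\; \H\bigl(H^0_\t(C), H^0_\t(Y)\bigr)
\]
valid for every $C\in\U$ and $Y\in\Sigma\V$. It follows by applying $\mathcal D(-,Y)$ to the $\t$-truncation triangle $\tau^{\leq-1}_\t C \to C \to H^0_\t(C) \to \Sigma\tau^{\leq-1}_\t C$, using that $\tau^{\leq-1}_\t C\in\Sigma\U$ together with $Y\in\Sigma\V$ kills the outer terms (via $\mathcal D(\U,\V)=0$ and the closure $\Sigma^{-1}\V\subseteq\V$), and symmetrically applying $\mathcal D(H^0_\t(C),-)$ to the truncation triangle of $Y$. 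Both implications are driven by this single isomorphism, specialised at $Y\in\H$ for one direction and at $C$ of the form $\cone(f_S)$ for the other.

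For (1)$\Rightarrow$(2), choose a set $\mathcal{C}$ of compact generators of $\t_\tau$, which by the standard construction of compactly generated $t$-structures (\`a la Alonso--Jerem\'ias--Souto) we may take to lie in $\U_\tau$. Since $\U_\tau = \Sigma\U * \mathcal{T}$, each $C\in\mathcal{C}$ satisfies $S_C := H^0_\t(C)\in\mathcal{T}$ and $H^i_\t(C)=0$ for $i>0$; Lemma~\ref{lem.cpt to fin pres in H}, applicable because $\t$ is homotopically smashing, then ensures $S_C$ is finitely presented in $\H$. Setting $\mathcal{S} := \{S_C : C\in\mathcal{C}\}$, the isomorphism above specialises to $\mathcal D(C, F) \cong \H(S_C, F)$ for every $F\in\H$. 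The inclusion $\mathcal{F}\subseteq\bigcap_S\Ker(\H(S,-))$ is then immediate from $\mathcal{F}\subseteq\V_\tau$, and the reverse inclusion follows from the identity $\V_\tau\cap\H=\mathcal{F}$, itself an easy consequence of the characterisation $\V_\tau = \{Y\in\Sigma\V : H^0_\t(Y)\in\mathcal{F}\}$.

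For (2)$\Rightarrow$(1), Theorem~\ref{mainthm.htpy smashing has AB5 heart} together with the set $\mathcal{X}$ of generators makes $\H$ a Grothendieck category, and Lemma~\ref{lem.cpt to fin pres in H} applied to $\mathcal{X}\subseteq\U$ shows these generators are finitely presented in $\H$, so $\H$ is locally finitely presented. Each $S\in\mathcal{S}$ therefore admits a presentation $X_1^S \xrightarrow{f_S} X_0^S \twoheadrightarrow S$ with $X_0^S, X_1^S \in \add(\mathcal{X})$, and the long exact cohomology sequence of $C^S := \cone(f_S)$ in $\mathcal D$ gives $H^0_\t(C^S)=S$, $H^{-1}_\t(C^S)=\Ker f_S$ and $H^i_\t(C^S)=0$ otherwise; thus $C^S$ is compact in $\mathcal D$ and lies in $\U_\tau$. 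If $\mathcal{C}_0$ is a set of compact generators of $\t$, let $\mathcal{C} := \Sigma\mathcal{C}_0 \,\cup\, \{C^S : S\in\mathcal{S}\}$. A shift computation yields $(\Sigma\mathcal{C}_0)^{\perp_{\leq 0}} = \mathcal{C}_0^{\perp_{<0}} = \Sigma\V$, and for $Y\in\Sigma\V$ the key isomorphism specialises to $\mathcal D(C^S, Y)\cong\H(S, H^0_\t(Y))$. Combining these with condition~(2) and the characterisation of $\V_\tau$, one concludes $\mathcal{C}^{\perp_{\leq 0}} = \{Y\in\Sigma\V : H^0_\t(Y)\in\mathcal{F}\} = \V_\tau$, so $\t_\tau$ is compactly generated by~$\mathcal{C}$.

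The one genuinely non-formal step is the lifting used in (2)$\Rightarrow$(1): one must produce a compact object in $\mathcal D$ whose $\t$-cohomology realises a prescribed finitely presented $S\in\mathcal{T}$. This is exactly where the extra hypothesis that $\H$ has a set of generators compact in $\mathcal D$ is indispensable, since it lets us write $S$ as the cokernel of a morphism between objects compact in $\mathcal D$, and hence as $H^0_\t$ of a compact cone. Once this lifting is available, the rest of the proof is bookkeeping with shifts and the torsion-pair decomposition of $\H$.
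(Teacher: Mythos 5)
Your proof is correct and follows essentially the same route as the paper: for (1)$\Rightarrow$(2) both take $\t$-cohomology of compact generators of $\t_\tau$ lying in $\U_\tau$ and use Lemma~\ref{lem.cpt to fin pres in H} together with the adjunction-style isomorphism $\mathcal D(C,Y)\cong\H(H^0_\t C,H^0_\t Y)$; for (2)$\Rightarrow$(1) both lift each $S\in\mathcal S$ to the cone of an $\add(\mathcal X)$-presentation and verify that $\Sigma\mathcal Y\cup\{X_S\}$ cuts out $\V_\tau$. The only cosmetic difference is that you isolate the isomorphism $\mathcal D(C,Y)\cong\H(H^0_\t C,H^0_\t Y)$ as a stated unifying principle, whereas the paper invokes it inline, and you sidestep the paper's normalization $\Sigma\hat{\mathcal S}\subseteq\hat{\mathcal S}$ by observing that negative-degree orthogonality is automatic for objects in the heart.
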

\begin{proof}
(1)$\Rightarrow$(2). Let $\hat{\mathcal{S}}\subseteq\mathcal{U}_\tau\subseteq\U$ be a set of compact objects such that $\Sigma\hat{\mathcal{S}}\subseteq \hat{\mathcal{S}}$ and which generates $\t_\tau$.
We put $\mathcal{S}:=\{H_\t^0(X):X\in\hat{\mathcal S}\}$,where each $H_\t^0(X)$ is finitely presented in $\H$ by Lemma~\ref{lem.cpt to fin pres in H}. As $\Ker(\mathcal{H}(H_\t^0(X),-)) = \H\cap\Ker(\D(\bbone)(H_\t^0(X),-)) = \H\cap\Ker(\D(\bbone)(X,-))$ for each $X\in\hat{\mathcal{S}}$, we have
\[\xymatrix{
\bigcap_{S\in\mathcal{S}}\Ker(\mathcal{H}(S,-)) =
\H\cap\bigcap_{X\in\hat{\mathcal{S}}}\Ker(\D(\bbone)(X,-)) =
\H\cap\V_\tau =
\mathcal{F}.}
\]

\smallskip\noindent
(2)$\Rightarrow$(1), {\em assuming that $\t$ is compactly generated}.
It is enough to adapt the proof of \cite[Thm.\,2.3]{BP}.
We start with a set of objects which are finitely presented and $\mathcal{F}=\bigcap_{S\in\mathcal{S}}\Ker(\H(S,-))$ in $\H$. For each $S\in\mathcal S$, we fix an object $X_S\in\U$ which is compact in $\D(\bbone)$ and $H_\t^0(X_S)\cong S$. Such an object exists by Proposition~\ref{prop.lifting_fp_to_compacts_in_the_aisle}. If $\mathcal{Y}$ is a set of compact generators of $\t$, we shall prove that $\Sigma\mathcal{Y}\cup\mathcal{X}$ generates $\t_\tau$, where $\mathcal{X}:=\{X_S:S\in\mathcal{S}\}$.
Bearing in mind that $\Sigma\mathcal{Y}\cup\mathcal{X}\subseteq\U_\tau$, our task reduces to prove that $\mathcal{V}':=(\Sigma\mathcal{Y})^{\perp_{\leq 0}}\cap\mathcal{X}^{\perp_{\leq 0}}=(\Sigma\mathcal{Y}\cup\mathcal{X})^{\perp_{\leq 0}}\subseteq\V_{\tau}$.
Since $(\Sigma\mathcal{Y})^{\perp_{\leq 0}} = \Sigma \V$, an object $V\in\D(\bbone)$ is in $\mathcal{V}'$ if, and only if, $V\in\Sigma\V$ and $\D(\bbone)(X_S,V)=0$, for all $S\in\mathcal{S}$.
However, since $X_S\in\U$ for all $S\in\mathcal{S}$, the following series of isomorphisms for each $V\in\Sigma\V$,
\[
\D(\bbone)(X_S,V)\cong \D(\bbone)(H_\t^0(X_S),V)\cong\H(H_\t^0(X_S),H_\t^0(V))=\H(S,H_\t^0(V)),
\]
shows that $V\in\mathcal{V}'$ if, and only if, $V\in\Sigma\V$ and $H_\t^0(V)\in\mathcal{F}$ or, equivalently, $V\in\mathcal{V}_\tau$. 

\end{proof}

\begin{example}\label{ex.HRS t-structure non-cpt gen}
Let $\D_R\colon \cat^{\op}\to \Cat$ be the canonical derivator enhancing the derived category $\Der(\mod R)$ for a ring $R$ (Example~\ref{expl.derived categories}), and let $\t=(\U,\Sigma \V)$ be the canonical \mbox{$t$-structure} on $\Der(\mod R)$.
Suppose that $R$ admits a non-trivial two-sided idempotent ideal $I$ contained in its Jacobson radical $J(R)$ (see \cite{kel94-smashing} for an explicit example and note that such an $I$ must be infinitely generated from either side and, consequently, $R$ must be non-Noetherian, by the Nakayama Lemma). Consider the torsion pair $\tau =(\mathcal{T}_I,\mathcal{F}_I)$, where 
\[
\mathcal{T}_I:=\{T\in\mod R:TI=T\}\quad\text{and}\quad\mathcal{F}_I:=\{F\in\mod R:FI=0\}.
\]
Then, the tilted $t$-structure $\t_\tau$ in $\Der(\mod R)$ is homotopically smashing but not compactly generated.
Indeed, the torsion-free class $\mathcal{F}_I$ is closed under directed colimits in $\mod R$, so $\t_\tau$ is homotopically smashing by Proposition~\ref{prop.homo_smash_iff_colim}. On the other hand, due to the Nakayama Lemma, $\mathcal{T}_I$ does not contain any non-zero finitely generated module. Then $\t_\tau$ cannot be compactly generated as a $t$-structure because of Proposition~\ref{prop.homo_fin_pres_is_fin_pres}.
\end{example}

\begin{remark}
Example~\ref{ex.HRS t-structure non-cpt gen} gives a negative answer to \cite[Quest.\,3.2]{BP}.
\end{remark}

\section{On the existence of a set of generators}\label{sec_generators}

We conclude with the discussion of when the heart of a $t$-structure in the base $\D(\bbone)$ of a strong and stable derivator $\D$ is actually is a Grothendieck Abelian category. Unlike the exactness of directed colimits, the existence of a generator is, to a large extent, a purely technical condition which is usually satisfied in examples ``from the nature''. In the world of $\infty$-categories, a condition for the heart of a $t$-structure to be Grothendieck Abelian was given by Lurie in~\cite[Rem.\,1.3.5.23]{Lurie_higher_algebra}. We give a similar (and short) discussion also in our setting, restricting to derivators of the form $\D=\D_{(\C,\W)}$, where $(\C,\W,\cal B,\cal F)$ is a combinatorial stable model category (see Example \ref{expl.derived categories}). These derivators are called ``derivators of small presentation'' and they can be characterized internally to the $2$-category of derivators, see \cite{Renaudin:aa}.

Recall that a model category $(\C,\W,\cal B,\cal F)$ is {\em combinatorial} if 
\begin{itemize}
\item $\C$ is a locally presentable category (see \cite{gabriel1971lokal, adamek1994locally});
\item the model structure $(\W,\cal B,\cal F)$ is cofibrantly generated. 
\end{itemize}
These model categories were introduced by J.\ Smith. Let us recall here the following properties whose proof essentially follows by  \cite[Prop.\,2.3 and 7.3]{dugger-comb-model} or \cite[Prop.\,2.5]{Barwick-localizations}:

\begin{lemma}\label{basic_combinatorial_lem}
Let $(\C,\W,\cal B,\cal F)$ be a combinatorial model category and $\mathcal S\subseteq \C$ a  set of objects. Then, there exists an infinite regular cardinal $\lambda$ such that
\begin{enumerate}[\textup(1\textup)]
\renewcommand{\theenumi}{\textup{\arabic{enumi}}}
\item the functors $\C(S,-)$ commute with $\lambda$-directed colimits for all $S\in \mathcal S$;
\item there are co/fibrant replacement functors which preserves $\lambda$-directed colimits;
\item $\lambda$-directed colimits of weak equivalences are again weak equivalences;
\item given a $\lambda$-directed set $I$ and a diagram $X\in \C^I$, there is an isomorphism in $\ho(\C)$
\[
\textbf{L}\varinjlim_IX\cong F(\varinjlim_I X), 
\]
where $F\colon \C\to\ho(\C)$ is the canonical functor. 
\end{enumerate}
\end{lemma}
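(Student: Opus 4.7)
The plan is to find a single infinite regular cardinal $\lambda$ witnessing all four properties simultaneously, obtained as the maximum of regular cardinals witnessing each condition separately. I would first fix a regular $\mu_0$ such that $\C$ is locally $\mu_0$-presentable, which exists by combinatoriality.

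For (1), each $S\in \mathcal{S}$ is $\lambda_S$-presentable for some regular $\lambda_S\geq \mu_0$, so any regular $\lambda_1\geq \sup_{S\in\mathcal{S}}\lambda_S$ (which exists because $\mathcal{S}$ is a set) makes each functor $\C(S,-)$ commute with $\lambda_1$-directed colimits. For (2), I would pick small sets $I$ and $J$ of generating cofibrations and trivial cofibrations, whose domains and codomains are all $\lambda_2$-presentable for some regular $\lambda_2\geq \mu_0$. Running the small object argument, truncated at stage $\lambda_2$, yields functorial factorizations whose construction, as a transfinite composite of pushouts along coproducts of maps in $I$ (resp.\ $J$), commutes with $\lambda_2$-directed colimits precisely because the relevant domains and codomains are $\lambda_2$-presentable. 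Factoring the unique map $\emptyset \to X$ (resp.\ the map to a terminal object) through this construction produces cofibrant (resp.\ fibrant) replacement functors preserving $\lambda_2$-directed colimits, as in \cite[Prop.\,7.3]{dugger-comb-model}.

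For (3), I would invoke the theorem of Smith (cf.\ \cite[Prop.\,2.3]{dugger-comb-model} or \cite[Prop.\,2.5]{Barwick-localizations}) that the subcategory $\W\subseteq \C^{\bbtwo}$ is accessibly embedded; consequently there is a regular cardinal $\lambda_3$ such that $\lambda_3$-directed colimits computed in the arrow category $\C^{\bbtwo}$ of morphisms in $\W$ again lie in $\W$. This is precisely the statement that $\lambda_3$-directed colimits of weak equivalences in $\C$ remain weak equivalences.

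Finally, set $\lambda := \max\{\mu_0,\lambda_1,\lambda_2,\lambda_3\}$. For (4), combinatoriality gives a projective model structure on $\C^I$, and one computes $\mathbf{L}\varinjlim_I X \cong \varinjlim_I QX$ for any projective cofibrant replacement $QX\to X$; this map is in particular a pointwise weak equivalence, so by (3) the induced morphism $\varinjlim_I QX\to \varinjlim_I X$ is a weak equivalence in $\C$ whenever $I$ is $\lambda$-directed, yielding the required isomorphism in $\ho(\C)$. The substantive obstacle in the whole argument is (3): conditions (1), (2), and (4) follow from routine manipulations with the small object argument and the definition of local presentability, but the closure of $\W$ under sufficiently directed colimits is Smith's accessibility theorem, and it is precisely this input that distinguishes the combinatorial setting from the merely cofibrantly generated one and makes the resulting derivator $\D_{(\C,\W)}$ well-behaved with respect to directed colimits.
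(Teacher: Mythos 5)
Your proof is correct, and your handling of (1)--(3) matches the paper's in substance: (1) from local presentability, (2) from the truncated small object argument with presentable (co)domains, and (3) from Smith's theorem that $\W$ is an accessible, accessibly embedded subcategory of $\C^{\bbtwo}$; the paper simply cites \cite[Prop.\,2.3]{dugger-comb-model} for (2) and (3) rather than unpacking them, and takes a finite maximum of regular cardinals as you do. Where you genuinely diverge is in (4). You compute $\mathbf{L}\varinjlim_I X$ via the projective model structure on $\C^I$ as $\varinjlim_I QX$ for a projectively cofibrant replacement $QX\to X$, then use (3) to conclude that the pointwise weak equivalence $QX\to X$ yields a weak equivalence $\varinjlim_I QX\to\varinjlim_I X$. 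The paper instead avoids cofibrant replacement altogether: since (3) says $\varinjlim_I$ carries $\W_I$ into $\W$, the universal property of the localization $F_I\colon\C^I\to\ho(\C^I)$ produces a unique functor $\ho(\C^I)\to\ho(\C)$ covering $\varinjlim_I$, and any such functor automatically satisfies the terminality characterizing $\mathbf{L}\varinjlim_I$. Your route buys concreteness and reuses the projective model structure (which you implicitly rely on elsewhere in the section anyway); the paper's route is more economical, requiring neither the existence of the projective model structure on $\C^I$ nor the identification of $\mathbf{L}\varinjlim_I$ with a Quillen left derived functor, only the bare universal property of localization. Both are valid, and (3) is the crucial common input.
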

\begin{proof}
(1) follows since $\C$ is locally presentable, while (2,3) are \cite[Prop.\,2.3 (i,ii)]{dugger-comb-model}.

\smallskip\noindent
(4). Given a $\lambda$-directed set $I$, since $\varinjlim_I$ induces a functor $\W_I \to \W$, the universal property of $F_I\colon \C^I\to\ho(\C^I)=\C^I[\W_{I}^{-1}]$ yields a unique functor completing the following solid diagram to a commutative square:
\[
\xymatrix@C=50pt{
\C^I\ar[d]_{\varinjlim_I}\ar[r]^-{F_I}&\ho(\C^I)\ar@{.>}[d]\\
\C\ar[r]_-{F}&\ho(C)
}
\]
Of course, such a functor automatically satisfies the universal property for being the total left derived functor of $\varinjlim_I$, hence we deduce the isomorphism in the statement.
\end{proof}

As remarked in Example \ref{expl.derived categories}, for derivators arising from model categories, the functor $\hocolim_I\colon \ho(\C^I)\to \ho(\C)$ is the total left derived functor of $\varinjlim_I\colon \C^I\to \C$. Using this identification, one easily deduces the following corollary:

\begin{corollary}\label{cor.first_commutativity}
Let $(\C,\W,\cal B,\cal F)$ be a combinatorial model category, $\D = \D_{(\C,\W)}$ the induced strong derivator as in Example~\ref{expl.derived categories} and $\lambda$ an infinite regular cardinal satisfying (4) in Lemma \ref{basic_combinatorial_lem}. If $I$ is a $\lambda$-directed and $X\in \C^I$, then there is an isomorphism in $\ho(\C)$
\[
\xymatrix{
\hocolim_IX\cong F(\varinjlim_I X),
} 
\]
where $F\colon \C\to\ho(\C)$ is the canonical functor. 
\end{corollary}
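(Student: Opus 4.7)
The plan is to reduce this directly to part (4) of Lemma~\ref{basic_combinatorial_lem} via the identification of $\hocolim_I$ with the total left derived functor of $\varinjlim_I$. Indeed, the corollary is essentially a restatement of that lemma once one unwinds how the derivator $\D_{(\C,\W)}$ encodes homotopy colimits.

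First I would recall from Example~\ref{expl.derived categories} the key identification: for the derivator $\D = \D_{(\C,\W)}$ arising from a (combinatorial) model category, the value $\D(I)$ is the localization $\ho(\C^I) = \C^I[\W_I^{-1}]$, and the homotopy colimit functor $\hocolim_I\colon \D(I)\to \D(\bbone)$ is by construction the total left derived functor $\mathbf{L}\varinjlim_I$ of the ordinary colimit $\varinjlim_I\colon \C^I\to \C$ (that is, its left adjoint $\pt_I^*$ at the derivator level is the total right derived functor of the constant diagram functor, so that the existence guaranteed by axiom (Der.3) matches the model-categorical derived functor). In particular, for $X\in\C^I$, the expression $\hocolim_I X$ in the statement means $\hocolim_I(F_I(X))$, where $F_I\colon \C^I\to \ho(\C^I) = \D(I)$ is the canonical localization.

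Now I would simply invoke Lemma~\ref{basic_combinatorial_lem}(4), which asserts precisely that the unique functor $\ho(\C^I)\to \ho(\C)$ making the square
\[
\xymatrix{
\C^I\ar[d]_{\varinjlim_I}\ar[r]^-{F_I}&\ho(\C^I)\ar@{.>}[d]\\
\C\ar[r]_-{F}&\ho(\C)
}
\]
commute is already the total left derived functor $\mathbf{L}\varinjlim_I$, under the hypothesis that $I$ is $\lambda$-directed. Combining these two identifications, we obtain
\[
\hocolim_I(F_I(X)) \;=\; (\mathbf{L}\varinjlim_I)(F_I(X)) \;\cong\; F(\varinjlim_I X),
\]
which is the desired isomorphism in $\ho(\C)$.

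There is really no hard step here; the only point requiring care is verifying that the derivator-theoretic $\hocolim_I$ (defined via axiom (Der.3) as the left adjoint of $\pt_I^*$) genuinely coincides with the model-categorical total left derived functor of $\varinjlim_I$. This, however, is precisely the content of Cisinski's theorem cited in Example~\ref{expl.derived categories}, so no further argument is needed beyond a pointer to that result.
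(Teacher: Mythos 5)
Your proof is correct and takes essentially the same approach as the paper: identify $\hocolim_I$ with the total left derived functor $\mathbf{L}\varinjlim_I$ via Cisinski's theorem (Example~\ref{expl.derived categories}), then invoke Lemma~\ref{basic_combinatorial_lem}(4). The paper treats this as immediate and gives no separate proof beyond the one-sentence remark preceding the corollary, which your argument reproduces faithfully.
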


We can prove the existence of a generator for the following large class of $t$-structures which includes all homotopically smashing ones (they constitute a special case for $\lambda=\aleph_0$):

\begin{definition} \label{def.accessibly-embedded}
Let $(\C,\W,\cal B,\cal F)$ be a stable and combinatorial model category, and $\t=(\U,\Sigma\V)$  a $t$-structure in $\ho(\C)$. We say that $\t$ is \emph{$\lambda$-accessibly embedded} in $\ho(\C)$, with $\lambda$ an infinite regular cardinal, if $\V$ is closed under $\lambda$-directed homotopy colimits in $\D_{(\C,\W)}$.
\end{definition}

\begin{proposition}\label{prop.lambda-colim}
Let $(\C,\W,\cal B,\cal F)$ be a stable and combinatorial model category, $\lambda$ an infinite regular cardinal satisfying (3) and (4) in Lemma \ref{basic_combinatorial_lem}, and $\t=(\U,\Sigma\V)$ a $\lambda$-accessibly embedded $t$-structure with heart $\H=\U\cap\Sigma\V$ in $\ho(\C)$.
Then, the following composition functor preserves $\lambda$-directed colimits:
\[
\C\overset{F}{\longrightarrow}\ho(\C)\overset{H^0}{\longrightarrow}\H.
\]
\end{proposition}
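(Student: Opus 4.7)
The plan is to reduce the proposition to showing that the cohomological functor $H^0\colon\mathcal D\to\H$ commutes with $\lambda$-directed homotopy colimits under the $\lambda$-accessibly embedded hypothesis, and then prove this commutation by combining the pointwise lift of the $t$-structure with the closure properties of aisle/coaisle under $\hocolim_I$. Explicitly, for a $\lambda$-directed system $Y\colon I\to\C$, Corollary~\ref{cor.first_commutativity} yields $F(\varinjlim_I Y)\cong\hocolim_I\mathscr{Y}$, where $\mathscr{Y}\in\D(I)=\ho(\C^I)$ is the image of $Y$ under the canonical localization. Hence it suffices to establish
\[
H^0(\hocolim_I\mathscr{Y})\cong\varinjlim_I H^0(\mathscr{Y}_i) \quad \text{in } \H.
\]

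To prove this, I would lift $\t=(\U,\Sigma\V)$ to $\t_I=(\U_I,\Sigma\V_I)$ on $\D(I)$ via Theorem~\ref{mainthm.lift_tstructure_general_thm}, and exploit that under the equivalence $\H_I\cong\H^I$ the cohomological functor $H^0_I\colon\D(I)\to\H_I$ of $\t_I$ satisfies $\dia_I H^0_I(\mathscr{Y})\cong(H^0(\mathscr{Y}_i))_{i\in I}$. Applying the triangulated functor $\hocolim_I$ to the $\t_I$-truncation triangle
\[
\tau_{\U_I}\mathscr{Y}\to\mathscr{Y}\to\tau^{\V_I}\mathscr{Y}\to\Sigma\tau_{\U_I}\mathscr{Y},
\]
Proposition~\ref{prop.aisles closed for hocolim} gives $\hocolim_I\tau_{\U_I}\mathscr{Y}\in\U$, while the $\lambda$-accessibly embedded hypothesis (using $I$ $\lambda$-directed) gives $\hocolim_I\tau^{\V_I}\mathscr{Y}\in\V$. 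By uniqueness of truncations, $\tau_\U(\hocolim_I\mathscr{Y})\cong\hocolim_I\tau_{\U_I}\mathscr{Y}$; that is, $\tau_\U$ commutes with $\lambda$-directed $\hocolim_I$.

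The same argument applied to the shifted $t$-structure $\t[1]=(\Sigma\U,\Sigma^2\V)$ shows that $\tau^{\Sigma\V}$ (the second truncation in the factorisation $H^0=\tau^{\Sigma\V}\circ\tau_\U$) also commutes with $\lambda$-directed $\hocolim_I$: its aisle $\Sigma\U$ is a $t$-aisle, so closed under arbitrary $\hocolim_I$ by Proposition~\ref{prop.aisles closed for hocolim}, and its coaisle $\Sigma^2\V$ is closed under $\lambda$-directed $\hocolim_I$ because $\Sigma$ is a triangulated auto-equivalence commuting with $\hocolim_I$. Composing the two commutations yields
\[
H^0(\hocolim_I\mathscr{Y})\cong\hocolim_I H^0_I(\mathscr{Y}).
\]

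To conclude, the combined closure statements imply that for $\mathscr{H}\in\H_I$ with $I$ $\lambda$-directed, $\hocolim_I\mathscr{H}\in\U\cap\Sigma\V=\H$, so Lemma~\ref{lem.hocolim=colim} gives $\hocolim_I\mathscr{H}\cong\varinjlim_I\dia_I\mathscr{H}$ (the $\lambda$-accessibly embedded analogue of Corollary~\ref{cor.hocolim=colim}). Applied to $\mathscr{H}=H^0_I(\mathscr{Y})$, this yields $\hocolim_I H^0_I(\mathscr{Y})\cong\varinjlim_I H^0(\mathscr{Y}_i)$, which together with the previous step finishes the argument. The main delicate point is the second-truncation step: it goes through precisely because the $\lambda$-accessibly embedded property is stable under shifting the $t$-structure, since $\Sigma$ commutes with $\hocolim_I$ as a triangulated auto-equivalence of $\mathcal D$.
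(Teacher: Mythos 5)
Your proof is correct and follows essentially the same approach as the paper's: both use Corollary~\ref{cor.first_commutativity} to handle the left half of the factorization and then reduce to showing that $H^0$ commutes with $\lambda$-directed homotopy colimits, relying on the closure of $\U$ under all homotopy colimits (Proposition~\ref{prop.aisles closed for hocolim}) and the closure of $\V$ under $\lambda$-directed ones (the $\lambda$-accessible embedding hypothesis), followed by the $\lambda$-directed analogue of Corollary~\ref{cor.hocolim=colim}. The paper's proof phrases this via a two-square diagram and leaves the ``right square commutes'' claim essentially as a one-line assertion, whereas you supply the explicit truncation-triangle argument (once for $\tau_\U$ and once for $\tau^{\Sigma\V}$ via the shifted $t$-structure), which is a useful unpacking of what the paper treats as immediate; the only minor slip is that the coaisle of $\t[1]=(\Sigma\U,\Sigma^2\V)$ is $\Sigma\V$, not $\Sigma^2\V$, though this does not affect the argument.
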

\begin{proof}
Given a $\lambda$-directed set $I$, consider the following diagram:
\[
\xymatrix@C=50pt{
\C^I\ar[r]^-{F_I}\ar[d]_{\varinjlim_I}&\ho(\C^I)\ar[r]^-{H^0_I}\ar[d]|{\hocolim_I}&\H_I\ar[d]^{\hocolim_I}\\
\C\ar[r]_-F&\ho(\C)\ar[r]_-{H^0}&\H.
}
\]
By Theorem~\ref{mainthm.lift_tstructure_general_thm}, $\H_I\cong \H^I$ and, identifying these two categories, $(\hocolim_I)_{\restriction\H_I}$ is conjugated to $\varinjlim_I\colon \H^I\to \H$ (see the proof of Corollary~\ref{cor.hocolim=colim}). This observation tells us that it is enough to show that the external square in the above diagram commutes. We  verify instead that the smaller squares do commute. In fact, the commutativity of the square on the left hand side is given by Corollary~\ref{cor.first_commutativity}, while the commutativity of the square on the right hand side follows from the fact that both $\U$ (by Proposition~\ref{prop.aisles closed for hocolim}) and $\V$ (by assumption) are closed under $\lambda$-directed homotopy colimits.
\end{proof}

Before we prove, as a main result of the section, Theorem~\ref{mainthm.Grothendieck heart} from the introduction, we give a discussion of its assumptions:

\begin{remark} \label{rem.well-gen}
A triangulated category is called \emph{algebraic} if it is the stable category of a Frobenius exact category~\cite[Sec.\,7.5]{Krause-ChicagoNotes}. Compactly generated algebraic triangulated categories are then triangle equivalent to the derived categories of small dg categories, and such derived categories are the homotopy categories of  combinatorial model categories of dg modules. We refer to \cite[Sec.\,2.2 and 2.3]{stovivcek2016compactly} for a more detailed discussion.

More generally, one can consider algebraic triangulated categories which are well generated in the sense of~\cite{Neeman}. These are, by~\cite[Thm.\,7.2]{porta2010popescu}, simply localizations of compactly generated algebraic triangulated categories $\cal D$ with respect to localizing subcategories $\cal L$ generated by a small set $\cal S$ of objects. Such a localization is also the homotopy category of a combinatorial model category. 
As an upshot, each well generated algebraic triangulated category is the homotopy category of a combinatorial model category. 

An analogous result for well generated \emph{topological} triangulated categories (i.e.,  homotopy categories of spectral model categories) can be found in~\cite[Thm.\,4.7 and 5.11]{heider2007}.
\end{remark}

\begin{lemma}
Let $(\C,\W,\cal B,\cal F)$ be a stable combinatorial model category and $\t=(\U,\Sigma \V)$ a $t$-structure in $\ho(\C)$ generated by a  set $\cal S$ of objects of $\C$. Then there exists an infinite regular cardinal $\lambda$ such that $\t$ is $\lambda$-accessibly embedded. 
\end{lemma}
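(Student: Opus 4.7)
The plan is to choose $\lambda$ so that, for a suitably small set $\cal S'$ of cofibrant objects that represents the (countably many, up to $\cal S$) shifts $\{\Sigma^j F(S) : S \in \cal S,\, j \in \N\}$ in $\ho(\C)$, each hom functor $\ho(\C)(T, -)$ with $T \in \cal S'$ commutes with $\lambda$-directed homotopy colimits. Once this is achieved, closure of $\V = \cal S^{\perp_{\leq 0}}$ under $\lambda$-directed homotopy colimits will be immediate: indeed, for any $\lambda$-directed set $I$ and any coherent diagram $\mathscr V \in \D(I)$ with $\mathscr V_i \in \V$ for all $i$, one will have $\ho(\C)(T, \hocolim_I \mathscr V) \cong \varinjlim_I \ho(\C)(T, \mathscr V_i) = 0$ for each $T \in \cal S'$, since membership $\mathscr V_i \in \cal S^{\perp_{\leq 0}}$ is equivalent to the vanishing $\ho(\C)(\Sigma^j F(S), \mathscr V_i) = 0$ for all $S \in \cal S$ and $j \geq 0$.

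To set this up, I would first replace each $S \in \cal S$ by a cofibrant replacement, which does not alter the class $\V$. Using that $\ho(\C)$ is stable and hence $\Sigma$ is an autoequivalence, I choose for each $S \in \cal S$ and each $j \in \N$ a cofibrant object $S_j$ of $\C$ with $F(S_j) \cong \Sigma^j F(S)$, and set $\cal S' := \{S_j : S \in \cal S,\, j \in \N\}$. I also fix a cylinder object $\mathrm{Cyl}(T) \in \C$ for each $T \in \cal S'$. Both $\cal S'$ and $\{\mathrm{Cyl}(T) : T \in \cal S'\}$ are small sets, so Lemma \ref{basic_combinatorial_lem} applied to their union produces an infinite regular cardinal $\lambda$ satisfying (1)--(4) of that lemma.

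The central step is then to show: for $T \in \cal S'$, any $\lambda$-directed set $I$ and any $V \in \C^I$, the canonical map
\[
\varinjlim_I \ho(\C)(T, F V_i) \longrightarrow \ho(\C)(T, \hocolim_I V)
\]
is a bijection. I would use Corollary \ref{cor.first_commutativity} to identify $\hocolim_I V \cong F(\varinjlim_I V)$, apply the fibrant replacement $R$ levelwise, and observe via property (2) that $\varinjlim_I RV_i \cong R(\varinjlim_I V)$ is itself fibrant. Since $T$ is cofibrant and the target fibrant, $\ho(\C)(T, F W) = \C(T, W)/\!\sim$ (left homotopy), and property (1) applied to both $T$ and $\mathrm{Cyl}(T)$ then lets me commute the $\lambda$-directed colimit past the hom set and past the homotopy equivalence relation: any homotopy $\mathrm{Cyl}(T) \to \varinjlim_I RV_i$ factors through some $RV_j$ by $\lambda$-smallness of $\mathrm{Cyl}(T)$, while parallel homotopies produced at finite stages obviously descend to the colimit.

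The main obstacle is this last compatibility of the homotopy equivalence relation with $\lambda$-directed colimits, which is precisely what forces the enlargement of the test set by the cylinder objects before invoking Lemma \ref{basic_combinatorial_lem}. All remaining steps are largely bookkeeping once the lemma has been harvested for the right data.
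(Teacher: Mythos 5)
Your argument is correct and follows the same overall strategy as the paper's proof: harvest a regular cardinal $\lambda$ from Lemma~\ref{basic_combinatorial_lem} for a small set of cofibrant test objects, apply the $\lambda$-directed-colimit-preserving fibrant replacement $R$ levelwise, identify $\hocolim_I$ with $\varinjlim_I$ for $\lambda$-directed $I$ via Corollary~\ref{cor.first_commutativity}, and then commute hom-sets past the colimit. The genuine difference is in the last step: you set out to prove the full bijection $\varinjlim_I\ho(\C)(T,FV_i)\cong\ho(\C)(T,\hocolim_I V)$, which forces you to control the left-homotopy relation, and this is why you have to enlarge the test set by the cylinder objects $\mathrm{Cyl}(T)$ before invoking Lemma~\ref{basic_combinatorial_lem}(1). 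The paper avoids this entirely by observing that only \emph{surjectivity} of $\colim_I\C(C,RX_i)\twoheadrightarrow\ho(\C)(C,\hocolim_I RX_i)$ is needed: this surjection (which holds simply because $C$ is cofibrant and $\colim_I RX_i\cong R(\colim_I X_i)$ is fibrant) factors, via the quotient maps $\C(C,RX_i)\twoheadrightarrow\ho(\C)(C,RX_i)=0$, through the zero group $\colim_I\ho(\C)(C,RX_i)$, so the target must vanish — no injectivity and no cylinder objects are needed. So the paper's route is shorter, but yours proves a slightly stronger intermediate statement. Incidentally, your explicit inclusion of cofibrant models $S_j$ of the shifts $\Sigma^j F(S)$ for all $j\ge 0$ is a point where you are actually more careful than the paper's write-up, which silently identifies $\V$ with $\S^\perp$ rather than $\S^{\perp_{\leq 0}}$; the paper's argument is of course salvaged by running it for the enlarged set of shifts, which is exactly what you do.
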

\begin{proof}
Up to replacing $\mathcal S$ by another set of the same cardinality, we can assume that any object in $\mathcal S$ is cofibrant. Let now $\lambda$ be an infinite regular cardinal with the properties (1--4) described in Lemma \ref{basic_combinatorial_lem}, and fix a fibrant replacement functor $R$ that commutes with $\lambda$-directed colimits. We have to prove that $\V=\mathcal S^{\perp}$ is closed under taking $\lambda$-directed homotopy colimits. Indeed, let $I$ be a $\lambda$-directed set and $X\in \C^I$ such that $X_i\in \V$ for all $i\in I$. For any $C\in \mathcal S$ and $i\in I$, there is a commutative diagram as follows:
\[
\xymatrix{
\C(C, RX_i)\ar@{->>}[d]_{(*)}\ar[rr]&&\C(C, \varinjlim_IRX_i)\ar@{->>}[d]^{(**)}\\
0=\ho(\C)(C, RX_i)\ar[rr]&&\ho(\C)(C, \hocolim_IRX_i)
}
\]
where $(*)$ is surjective since $C$ is cofibrant and $RX_i$ is fibrant, while $(**)$ is surjective since $ \hocolim_IRX_i\cong \varinjlim_IRX_i\cong R(\varinjlim_IX_i)$ by Lemma \ref{basic_combinatorial_lem} and the fact that $R$ commutes with $\lambda$-directed colimits. Using the universal property of $\varinjlim_I$ in $\Ab$, we obtain the following commutative diagram:
\[
\xymatrix{
\varinjlim_I\C(C, RX_i)\ar@{->>}[d]\ar[rr]^{\cong}&&\C(C, \varinjlim_IRX_i)\ar@{->>}[d]\\
0=\varinjlim_I\ho(\C)(C, RX_i)\ar[rr]&&\ho(\C)(C, \hocolim_IRX_i)
}
\]
where the top row is invertible by  Lemma \ref{basic_combinatorial_lem}, so that $\ho(\C)(C, \hocolim_IRX_i)=0$, for all $C\in \mathcal S$, and so $\hocolim_IX_i\cong \hocolim_IRX_i\in \mathcal S^{\perp}=\V$.
\end{proof}
\color{black}

\begin{proof}[Proof of Theorem~\ref{mainthm.Grothendieck heart}]
We remind the reader that we need to prove the following. If $(\C,\W,\cal B,\cal F)$ is a stable combinatorial model category and $\t=(\U,\Sigma\V)$ is a $\lambda$-accessibly embedded $t$-structure in $\ho(\C)$, then the heart $\H=\U\cap\Sigma\V$ has a generator.
We may also, without loss of generality, assume that $\lambda$ satisfies conditions (3) and (4) from Lemma \ref{basic_combinatorial_lem}.
Since the category $\C$ is locally presentable, there exists a set $\Q$ of objects of $\C$ such that every object $C\in\C$ is a $\lambda$-directed colimit $C = \varinjlim_{I_C} (Q_i)$ of a $\lambda$-direct system $(Q_i)_{i\in I_C}$ in $\Q$. Consider the following set of objects in $\H$:
\[
\overline{\mathcal Q}:=\{H^0(Q):Q\in \Q\}.
\]
Now notice that
\[
H^0(F(C))\cong H^0(F(\varinjlim_{I_C}Q_i))\overset{(*)}\cong \varinjlim_{I_C}H^0(F(Q_i)),
\]
where $(*)$ follows by Proposition~\ref{prop.lambda-colim}. 
Since any object in $\H$ is of the form $H^0(F(C))$ for some $C\in\C$, we have just verified that $\overline{ \mathcal Q}$ is a set of generators for $\H$. 
\end{proof}

\begin{proof}[Proof of Corollary~\ref{maincor.Grothendieck heart}]
If the $t$-structure in the above proof is compactly generated, then it is homotopically smashing by Proposition~\ref{prop.homtopsmashing-implies-smashing}. Since the heart is (Ab.$5$) by Theorem~\ref{mainthm.htpy smashing has AB5 heart} and it has a generator by Theorem~\ref{mainthm.Grothendieck heart}, it is a Grothendieck Abelian category.
\end{proof}

\begin{remark}\label{comparison_rem}
Corollary~\ref{maincor.Grothendieck heart} is a straightforward consequence of Theorems~\ref{mainthm.htpy smashing has AB5 heart} and \ref{mainthm.Grothendieck heart}.
On the other hand, under the same set of hypotheses, one even has that $\H=\varinjlim H^0_\t(\mathcal T)$, where $\mathcal S$ is a set of compact generators for $\t$ and $\mathcal T$ is the smallest subcategory of $\ho(\C)$ containing  $\mathcal S$ which is closed under extensions, suspension and direct summands.
Indeed, $\mathcal T$ is precisely the class of compact objects of $\ho(\C)$ which are contained in the aisle $\U$. We refer to~\cite[Thm.\,4.2.1(2)]{bondarko} or~\cite[Thm.\,4.5]{stovivcek2016compactly} for this fact. The conclusion then follows from~\cite[Thm.\,1.6]{SaorinStovicekTstrViaFunctors}, where it was shown, while this paper was under review, that the heart $\H$ is locally finitely presentable and $H^0_\t(\mathcal T)$ is precisely the class of finitely presented objects (recall Proposition~\ref{prop.lifting_fp_to_compacts_in_the_aisle}).
%
\end{remark}

\appendix
\section*{Appendix A: Directed homotopy colimits of spectra}
\label{sec_spectra}
\addcontentsline{toc}{section}{Appendix A: Directed homotopy colimits of spectra}
\setcounter{section}{1}

The main point of this appendix is to establish that the stable homotopy groups of spectra (in the topological sense) commute with (homotopy) directed colimits.
In the terminology of Subsection~\ref{subs_homofpintr}, one can equivalently say that the sphere spectrum is intrinsically homotopically finitely presented in $\D_\Sp$.
This can be viewed as a topological analogue of the fact that the cohomology groups of complexes of Abelian groups commute with directed colimits. Although this result seems to be well-known to experts in homotopy theory (see, e.g., \cite[Thm.\,1.7]{Adams71} or~\cite[Rem.\,1, p.~331]{Switzer}), we are lacking an adequate reference.

\medskip\noindent
\textbf{Model structures on categories of diagrams.} 
When discussing homotopy colimits in detail, we are confronted with the following question: Given a model category $\C$ with the class of weak equivalences $\W$ and a small category $I$, is there a suitable model structure on the diagram category $\C^I$ with pointwise weak equivalences? Although the existence of such model structures is not clear in full generality and for many purposes one can work around this problem with other techniques (cf.~\cite{Cis03}), they nevertheless exist in many situations and make the discussion easier there. Most notably, one usually considers two ``extremal'' model structures, provided that they exist:

\begin{definition} \label{def.diagram model structures}
	Given a model category $(\C, \W,\cal B,\cal F)$ and a small category $I$,  a model structure on $\C^I$ is called
	\begin{enumerate}[\textup(1\textup)]
		\item the \emph{projective model structure} if the weak equivalences and fibrations are defined pointwise (i.e., a morphism $f\colon X \to Y$ in $\C^I$ is a weak equivalence or fibration if $f_i\colon X_i \to Y_i$ is a weak equivalence or fibration, respectively, in $\C$ for each $i\in I$);
		\item the \emph{injective model structure} if the weak equivalences and cofibrations are defined pointwise.
	\end{enumerate}
\end{definition}

Note that both the projective and the injective model structures, when they exist, are unique. It is also a standard fact that the class of injective fibrations is included in the class of projective fibrations and dually for cofibrations. To see this, note that for each $i\in I$, the evaluation functor $(-)_{\restriction{i}}\colon \C^I \to \C$, $X \mapsto X(i)$ has a left adjoint $-\otimes i\colon \C \to \C^I$ given by $(X\otimes i)(k)\cong \coprod_{I(i,k)}X$, for all $k\in I$ (see Example \ref{coh_incoh_rest_ext_example}). One readily checks that $-\otimes i$ preserves cofibrations and trivial cofibrations if $\C^I$ is equipped with the injective model structure. Thus, $(-\otimes i, (-)_{\restriction{i}})$ is a Quillen adjunction and $(-)_{\restriction{i}}$ sends injective fibrations to fibrations in $\C$ for each $i\in I$. Another fact which we will need is the following observation regarding the structure of fibrant and cofibrant objects.

\begin{lemma} \label{lem.structure fibrant cofibrant}
	Let $(\C,\W,\cal B,\cal F)$ be a model category and $(I,\le)$  a partially ordered set (viewed as a small category). If a projective model structure exists, $X\in\C^I$ is a projectively cofibrant object and $i\le j$ are elements of $I$, then $X(i) \to X(j)$ is a cofibration in $\C$. Dually, if an injective model structure exists and $X$ is an injectively fibrant object, then $X(i) \to X(j)$ is a fibration in $\C$. 
\end{lemma}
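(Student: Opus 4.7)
The plan is to reduce the problem to the case $I=\mathbf{2}$. Given $i\le j$ in $I$, I will consider the functor $u\colon \mathbf{2}\to I$ sending $0\mapsto i$, $1\mapsto j$, and the associated restriction $u^*\colon \C^I\to \C^{\mathbf{2}}$, which takes $X$ to the arrow $(X(i)\to X(j))$. The strategy is to show that $u^*$ is left Quillen for the projective model structures and right Quillen for the injective model structures whenever the corresponding model structure on $\C^I$ exists. Granted this, $u^*X$ will be projectively cofibrant (resp.\ injectively fibrant) in $\C^{\mathbf{2}}$ whenever $X$ is in $\C^I$, and the lemma reduces to identifying such ``endpoint'' objects of $\C^{\mathbf{2}}$ with (co)fibrations of $\C$. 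Note that both model structures on $\C^{\mathbf{2}}$ always exist, since $\mathbf{2}$ is a direct Reedy category (whence projective equals Reedy), and $\mathbf{2}^{op}$ is direct as well (whence the Reedy structure on $\C^{\mathbf{2}}$, viewed with $\mathbf{2}$ as inverse Reedy, coincides with the injective one).

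For the Quillen properties I will use the adjoint triple $u_!\dashv u^*\dashv u_*$; both Kan extensions exist and can be computed pointwise since $\C$ is bicomplete. A small case analysis in the poset $I$ shows that for each $k\in I$ the slice categories $k/u$ and $u/k$ collapse to $\mathbf{2}$, a terminal category, or the empty category, depending on the relative positions of $k$, $i$ and $j$. Consequently the pointwise values $(u_*Y)(k)$ and $(u_!Y)(k)$ are each one of $Y(0)$, $Y(1)$, the terminal object, or the initial object of $\C$. From this pointwise description one reads off directly that $u_*$ preserves pointwise fibrations and trivial fibrations (the projective ones), and that $u_!$ preserves pointwise cofibrations and trivial cofibrations (the injective ones). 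Therefore $u^*\dashv u_*$ is Quillen for the projective structures and $u_!\dashv u^*$ is Quillen for the injective structures, as required.

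To finish, I will appeal to a standard Reedy calculation with latching, respectively matching, objects to characterize the relevant objects in $\C^{\mathbf{2}}$: an arrow $A\to B$ is projectively cofibrant if and only if $A$ is cofibrant in $\C$ and $A\to B$ is a cofibration, and dually it is injectively fibrant if and only if $B$ is fibrant in $\C$ and $A\to B$ is a fibration. Applying these characterizations to $u^*X=(X(i)\to X(j))$ then gives the lemma in both cases. The only substantive step is the case analysis verifying the Quillen property of $u^*$, but because $I$ is a poset the comma categories are so simple that this is routine bookkeeping rather than a genuine obstacle.
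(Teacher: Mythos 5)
Your argument is correct, and it arrives at the same essential construction as the paper but packages it through higher-level machinery. The paper proves the injective case by a direct lifting argument: given a trivial cofibration $c\colon U\to V$ and a commutative square into $X(i)\to X(j)$, it explicitly builds a pointwise trivial cofibration $c'\colon U'\to V'$ in $\C^I$ (with $V'=V\otimes i$, and $U'(k)$ equal to $V$, $U$, or the initial object depending on whether $k\ge j$, $i\le k\not\ge j$, or $k\not\ge i$) and extracts the required lift from the injective fibrancy of $X$ applied to $c'$. What you compute as $(u_!Y)(k)$ in your comma-category case analysis is \emph{exactly} this $U'$ and $V'$: the paper's $c'$ is $u_!$ applied to the map from $Y=(U\to V)$ to the constant $\mathbf{2}$-diagram at $V$. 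So the two proofs are built on the same pointwise computation; the difference is that the paper unfolds the construction by hand and verifies the lifting property directly, whereas you wrap it in the statement that $u_!\dashv u^*$ is Quillen for the injective structures and then invoke the Reedy description of injectively fibrant objects of $\C^{\mathbf 2}$. Your version is cleaner conceptually, but it buys this by importing Reedy theory (both to guarantee the existence of the injective structure on $\C^{\mathbf 2}$ for an arbitrary model category $\C$ and to characterize its fibrant objects), which the paper's elementary argument avoids entirely. One small remark: your Reedy characterization of injectively fibrant arrows also gives that $X(j)$ is fibrant; this is a stronger conclusion than the lemma states, and it is consistent with the paper's observation that injective fibrations are in particular projective fibrations. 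Both proofs are sound; yours is shorter modulo the Reedy prerequisites, while the paper's is self-contained.
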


\begin{proof}
	We  only prove the part regarding the injective model structure, the other part is dual. In view of \cite[Lem.\,1.1.10]{Hovey_libro}, we need to prove that, given any commutative square
	\begin{equation}\label{eq.commutative square injectively fibrant}
	\vcenter{
		\xymatrix{
			U \ar[d]_-{c} \ar[r]^-{u} & X(i) \ar[d] \\
			V \ar[r]_-{v} \ar@{.>}[ur]|\hole|h & X(j)
		}
	}
	\end{equation}
	in $\C$, where $c\colon U \to V$ is a trivial cofibration, the dotted arrow can be filled in so that both triangles commute.	To construct $h$, we start with the fact that $X$ is injectively fibrant, i.e., $\C(V',X) \to \C(U',X)$ is surjective for any pointwise trivial cofibration $f'\colon U' \to V'$ in $\C^I$. We apply this property to a specially crafted $f'$, based on the morphism $f$ above. We let $V' = V \otimes i$, i.e., $V'(k) = V$ if $k \ge i$ and is the initial object $0\in\C$ otherwise. Let also:
	\[
	U'(k) = 
	\begin{cases}
	$V$ & \textrm{if } k \ge j;\\
	$U$ & \textrm{if } k \ge i \textrm{ and } k \not\ge j;\\
	0  & \textrm{otherwise.}
	\end{cases}
	\]
	The morphisms $U'(k) \to U'(\ell)$ are copies of $c$ if $k\ge i$, $k\not\ge j$ and $\ell\ge j$, and the identity morphisms or the morphisms from the initial object, otherwise. There is an obvious morphism $c'\colon U' \to V'$ whose components are just identity morphisms and copies of $c$. Furthermore, the commutative square~\eqref{eq.commutative square injectively fibrant} allows us to define a morphism $u'\colon U' \to X$ such that
\begin{enumerate}
\item	$u'(k)$ is the composition of $v$ with $X(j)\to X(k)$ if $k\ge j$,
\item $u'(k)$ is the composition of $u$ with $X(i)\to X(k)$ if $k\ge i$, but $k\not\ge j$ and
\item $u'(k)$ is the morphism from the initial object otherwise.
\end{enumerate}

As mentioned above, our assumption dictates that $u'\colon U' \to X$ factors through $c'\colon U' \to V'$ via a map $h'\colon V' \to X$. The restriction of the morphism $c'$ and $h'$ to the components $i,\, j\in I$ yields the following commutative diagram in $\C$,
	\[
	\xymatrix@C=50pt{
		U \ar[d]_-{c} \ar[r]^-{c} & V \ar@{=}[d] \ar[r]^-{h'(i)} & X(i) \ar[d] \\
		V \ar@{=}[r] & V \ar[r]_-{h'(j)} & X(j),
	}
	\]
	where the compositions in the rows are $u$ and $v$ respectively. It follows that $v=h'(j)$ and $h = h'(i)$ fits into~\eqref{eq.commutative square injectively fibrant}.
\end{proof}

Our main motivation for considering the projective model structure is that, if it exists, the constant diagram functor $\kappa_I\colon \C\to \C^I$  preserves fibrations and trivial fibrations, so that $(\colim_I, \kappa_I)$ is a Quillen adjunction. In other words, the  total left derived functor $\mathbf{L}\colim_I\colon \ho(\C^I) \to \ho(\C)$ exists and it can be computed using projectively cofibrant resolutions of diagrams. We will denote $\mathbf{L}\colim_I$ by $\hocolim_I$ and call it the \emph{homotopy colimit} functor. Of course, formally dual statements hold for limits and the injective model structure.

Finally, we touch the problem of the existence of projective an injective model structures in the case of combinatorial model structures (see Section \ref{sec_generators}):

\begin{proposition}[{\cite[Prop.\,A.2.8.2]{HTT}}] \label{prop.diagram model structures}
	Let $(\C,\W,\cal B,\cal F)$ be a combinatorial model category and $I$ a small category. Then the diagram category $\C^I$ admits both the projective and the injective model structures.
\end{proposition}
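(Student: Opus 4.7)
The plan is to treat the two model structures separately, with the projective case being essentially formal and the injective case requiring Smith's recognition theorem for combinatorial model structures (as in, e.g., \cite[A.2.6.8]{HTT}).

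For the projective model structure my strategy would be to lift generating (trivial) cofibrations from $\C$ to $\C^I$ along the adjunctions $(-\otimes i)\dashv (-)_i\colon \C\rightleftarrows \C^I$ already used in the text. If $\mathcal{I}_0$ and $\mathcal{J}_0$ are sets of generating cofibrations and generating trivial cofibrations for $\C$, one forms
\[
\mathcal{I}_{\mathrm{proj}}:=\{f\otimes i:f\in \mathcal{I}_0,\ i\in \Ob(I)\},\qquad \mathcal{J}_{\mathrm{proj}}:=\{g\otimes i:g\in \mathcal{J}_0,\ i\in \Ob(I)\}.
\]
By the hom-tensor adjunction, a morphism $\varphi$ in $\C^I$ has right lifting property with respect to $\mathcal{I}_{\mathrm{proj}}$ (resp.\ $\mathcal{J}_{\mathrm{proj}}$) if and only if each component $\varphi_i$ is a trivial fibration (resp.\ fibration) in $\C$. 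Since $\C$ is locally presentable, so is $\C^I$, hence the small object argument produces functorial factorizations, and the remaining model-category axioms descend pointwise from $\C$.

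For the injective structure the obstacle is that the class of pointwise cofibrations has no obvious small set of generators. My plan is to invoke Smith's recognition theorem with weak equivalences $\W_{\mathrm{inj}}$ the pointwise weak equivalences and with a generating set $\mathcal{I}_{\mathrm{inj}}$ of pointwise cofibrations still to be constructed. The inputs Smith's theorem requires are: $\C^I$ is locally presentable; $\W_{\mathrm{inj}}$ is an accessible and accessibly embedded subcategory of the arrow category $(\C^I)^{\bbtwo}$ satisfying two-out-of-three and closure under retracts (which follows pointwise from the combinatoriality of $\C$, since limits of accessible functors are accessible); and the class $\mathrm{Cof}_{\mathrm{inj}}$ of pointwise cofibrations is the saturation under cobase change and transfinite composition of a small set. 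To produce that small set, I would choose a regular cardinal $\kappa$ large enough that $\C^I$ is $\kappa$-accessible, that the defining data (pointwise weak equivalences, pointwise cofibrations) of the injective structure are all $\kappa$-accessible subcategories of $(\C^I)^{\bbtwo}$, and that every pointwise cofibration is a $\kappa$-filtered colimit of pointwise cofibrations between $\kappa$-presentable objects. Taking $\mathcal{I}_{\mathrm{inj}}$ to be a set of representatives of the latter, Smith's theorem then outputs the injective model structure.

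The main obstacle is the last step: simultaneously arranging smallness (so $\mathcal{I}_{\mathrm{inj}}$ is a genuine set of isomorphism classes) and largeness (so that every pointwise cofibration is detected by maps between $\kappa$-presentable diagrams). This is a non-trivial transfinite bookkeeping in the setting of accessible categories; it is exactly where the hypothesis that $\C$ is combinatorial, rather than merely cofibrantly generated, is used, and it is the technical content of \cite[A.2.8.2]{HTT}. Once $\mathcal{I}_{\mathrm{inj}}$ is obtained the rest of the verification is formal.
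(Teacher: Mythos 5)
The paper does not supply a proof; it cites the statement directly to Lurie's \emph{Higher Topos Theory}, Proposition A.2.8.2, and your reconstruction follows the same two-step argument used there: lifting generating sets along the adjunctions $(-\otimes i)\dashv (-)\restriction_i$ for the projective case, and invoking Smith's recognition theorem (with the hard technical input being a small generating set for the pointwise cofibrations, HTT Lemma A.2.8.3) for the injective case. Your outline is correct and matches the cited source; the only step you compress in the projective half is the usual retract argument identifying $\mathcal{J}_{\mathrm{proj}}$-cof with the projective trivial cofibrations, which is standard.
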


\medskip\noindent
\textbf{Model categories of simplicial sets and spectra.}
Let $\Delta$ be the category with all finite ordinals $\mathbf{1},\, \mathbf{2},\, \mathbf{3}, \ldots$, where $\mathbf{n}=\{0\to 1\to\ldots\to (n-1)\}$, as objects and order-preserving maps as morphisms. Here we keep our convention for ordinals from set-theory rather than from topology (there one often denotes by $[n]$ the ordinal $\mathbf{n+1}$ as it indexes the $n$-dimensional simplices). The category $\sSet$ of \emph{simplicial sets} is defined as the category of functors $\Delta^{\op}\to\Set$. As customary, we denote by $\Delta[n] := \Delta(-,(\mathbf{n+1}))$ the representable functors in $\sSet$.

A simplicial set can be viewed as a combinatorial model for a topological space. More precisely, there is a \emph{geometric realization functor} $\lvert-\rvert\colon \sSet \to \Top$, $X \in \sSet \mapsto \lvert X\rvert \in \Top$. This is a left adjoint functor which preserves finite limits up to a weak homotopy equivalence. More in detail, recall that a continuous map if topological spaces $f\colon X \to Y$ is a \emph{weak homotopy equivalence} if $\pi_n(f)\colon \pi_n(X,x) \to \pi_n(Y,f(x))$ is a bijection for all $n\ge 0$ and all base points $x\in X(\mathbf{1})$ (here, $\pi_0(X,x)$ is just the set of all path components of $X$, while $\pi_n(X,x)$ is a group for $n\ge 1$ and an Abelian group for $n\ge 2$). Now if $I\in\cat$ is a finite category and $X\in\sSet^I$, the canonical map $\lvert \lim_{i\in I} X_i\rvert\to\lim_{i\in I}\lvert X_i\rvert$ is a bijection of sets, but the topologies may not agree. Nevertheless, it is always a weak homotopy equivalence---see~\cite[Sec.\,III.3]{GZ} for details.
 The topological spaces of the form $\lvert X\rvert$ have very nice properties as they naturally have a structure of CW-complexes \cite[Def.\,5.3]{Switzer}. In fact, if $X\subseteq Y$ is an inclusion of simplicial sets, then $\lvert X\rvert$ is naturally a CW-subcomplex of $\lvert Y\rvert$ in the sense of \cite[Def.\,5.8]{Switzer}. This follows, e.g., directly from the proof of~\cite[Prop.\,3.2.2]{Hovey_libro}.

Given $X \in \sSet$, $x\in X(\mathbf{1})$ and $n\ge 0$, one defines $\pi_n(X,x):=\pi_n(\lvert X\rvert,\lvert x\rvert )$ with the base point corresponding to $x$. It is also possible to define $\pi_n(X,x)$ combinatorially directly on the simplicial set $X$, see~\cite[Sec.\,I.7 and III.4]{GoerssJardine}.
The category $\sSet$ comes equipped with a standard model structure such that cofibrations are inclusions of simplicial sets, fibrations are the so-called Kan fibrations, and $f\colon X \to Y$ is a weak equivalence if $\pi_n(f)\colon \pi_n(X,x) \to \pi_n(Y,f(x))$ is a bijection for all $n\ge 0$ and all base points $x\in X(\mathbf{1})$ (or, equivalently, by \cite[Thm.\,6.32]{Switzer}, if $\lvert f\rvert$ is a homotopy equivalence of topological spaces).

Since homotopy groups need a choice of a base point, it will be helpful to work with the category $\sSet_*$ of {pointed simplicial sets}, which is simply the slice category $\Delta[0]/\sSet$. In pedestrian terms, its objects are pairs $(X,x)$, where $X\in\sSet$ and $x\in X(\mathbf{1})$, and the morphisms are base point-preserving morphisms of simplicial sets. This category also carries a model structure where a morphism $f\colon (X,x) \to (Y,y)$ is a weak equivalence, cofibration or fibration if the underlying map $X\to Y$ is such in $\sSet$ (see~\cite[Prop.\,1.1.8]{Hovey_libro}).

If $I$ is a directed set and $(X_i,x_i)_{i\in I} \in \sSet_*^I$, then there is a canonical homomorphism $\varinjlim_I\pi_n(X_i,x_i) \to \pi_n(\hocolim_I(X_i,x_i))$ for each $n\ge 0$. More explicitly, we first replace $(X_i,x_i)_{i\in I}$ by a pointwise weakly equivalent projectively cofibrant diagram $(X'_i,x'_i)$, then apply $\pi_n$ to the adjunction unit $\eta\colon (X'_i,x'_i) \to \kappa_I(\varinjlim_I (X'_i,x'_i)) = \kappa_I(\hocolim_I (X_i,x_i))$, and finally take the colimit map of the resulting cocone of sets or groups.

\begin{lemma} \label{lem.htpy groups and sSet_*}
	In the above setting, the map $\varinjlim_I\pi_n(X_i,x_i) \to \pi_n(\hocolim_I(X_i,x_i))$ is a bijection.
\end{lemma}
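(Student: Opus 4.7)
The plan is to transfer the question to topology via geometric realization and then invoke a classical compactness argument. First, by Proposition~\ref{prop.diagram model structures} the projective model structure on $\sSet_*^I$ exists (since $\sSet_*$ is combinatorial), so replace $(X_i,x_i)_{i\in I}$ by a pointwise weakly equivalent, projectively cofibrant resolution $(X'_i,x'_i)_{i\in I}$. Both sides of the asserted bijection are invariant under this replacement, so we may assume the diagram itself is projectively cofibrant. By Lemma~\ref{lem.structure fibrant cofibrant} applied to the directed poset $I$, every transition morphism $(X'_i,x'_i)\to (X'_j,x'_j)$ is then a cofibration in $\sSet_*$, i.e.\ a pointed inclusion of simplicial sets, and $\hocolim_I (X'_i,x'_i)$ is represented by the ordinary colimit $\varinjlim_I (X'_i,x'_i)$; in particular the adjunction unit $\eta$ is levelwise the canonical inclusion into this colimit.

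Next, apply the geometric realization $|\cdot|\colon \sSet\to \Top$. Being a left adjoint it preserves colimits, so $|\varinjlim_I X'_i|\cong \varinjlim_I |X'_i|$ in $\Top$, and each inclusion $X'_i\subseteq X'_j$ realizes to an inclusion $|X'_i|\subseteq |X'_j|$ of CW-subcomplexes. Consequently, the directed colimit $\varinjlim_I |X'_i|$ is a CW-complex containing every $|X'_i|$ as a CW-subcomplex, and the colimit topology agrees with the weak topology with respect to this exhausting family; the base points $x'_i$ assemble into a single base point $x'$ of the colimit. Hence
\[
\pi_n(\hocolim_I(X_i,x_i))\;\cong\;\pi_n\bigl(\varinjlim_I |X'_i|,\,x'\bigr),
\]
and the comparison map in the statement is induced by the obvious inclusions $|X'_i|\hookrightarrow \varinjlim_I |X'_i|$.

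The final step is the classical compactness argument. Any pointed continuous map $f\colon(S^n,*)\to(\varinjlim_I|X'_i|,x')$ has compact image, which by the weak topology lies in a finite subcomplex, and such a finite subcomplex is contained in some $|X'_{i_0}|$ because $I$ is directed. Thus $f$ factors through $|X'_{i_0}|$, yielding surjectivity of the map $\varinjlim_I\pi_n(X'_i,x'_i)\to \pi_n(\varinjlim_I|X'_i|,x')$. Applying the same argument to a based homotopy $(S^n\times[0,1],*\times[0,1])\to \varinjlim_I|X'_i|$, whose compact image again sits inside some $|X'_{i_1}|$, gives injectivity.

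The only point requiring care is the identification of the colimit topology on $\varinjlim_I|X'_i|$ with the CW-weak topology, on which the compactness argument hinges; this is a standard fact about directed colimits along closed CW-subcomplex inclusions and is the essential obstacle, but it is well-documented in algebraic topology and causes no real difficulty once the reduction to cofibrant diagrams with inclusions as transition maps has been made.
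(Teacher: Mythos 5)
Your proof is correct and follows the same route as the paper (projectively cofibrant replacement, Lemma~\ref{lem.structure fibrant cofibrant} to upgrade transition maps to inclusions, geometric realization, compactness argument), simply spelling out the compactness reasoning that the paper delegates to Switzer's Propositions 5.7 and 7.52. The topological concern you flag at the end is in fact automatic: since geometric realization is a left adjoint, $\lvert\varinjlim_I X'_i\rvert \cong \varinjlim_I \lvert X'_i\rvert$ is already a CW-complex carrying the weak topology, so no separate identification of topologies is needed.
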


\begin{proof}
	The morphisms $X_i \to X_j$ in the projectively cofibrant diagram $(X'_i,x'_i)\in\sSet_*$ are inclusions by Lemma~\ref{lem.structure fibrant cofibrant}, so we can assume that $X'_i$ are simplicial subsets of $\varinjlim_I X'_i$ and that $\varinjlim_I X'_i$ is the directed union of the $X'_i$.
	As any compact subset of $\lvert \varinjlim_I X'_i \rvert$ is contained in $\lvert X_i\rvert$, for some $i\in I$, by \cite[Prop.\,5.7]{Switzer}, we can conclude by~\cite[Prop.\,7.52]{Switzer}.
\end{proof}

Since our main object of interest are stable Grothendieck derivators and any stable derivator is enriched over spectra by \cite[Appendix A.3]{CisTab11},  our main goal is the analogue of Lemma~\ref{lem.htpy groups and sSet_*} for spectra. To this end, let us quickly recall one model structure for the category of spectra from~\cite{BF-spectra}.

First note that the category $\sSet$ carries a Cartesian (closed) symmetric monoidal structure $(\sSet,\times,\Delta[0])$. Now the forgetful functor $\sSet^*\to \sSet$ has a left adjoint which sends $X\in\sSet$ to the disjoint union $X_+ := X \cup \Delta[0]$, and this adjunction allows to define a unique monoidal structure $(\sSet,\wedge,\Delta[0]_+)$ such that $\wedge$ preserves colimits in each component and $X \mapsto X_+$ is a monoidal functor. The functor $\wedge$ is called the \emph{smash product}.

The key object for the definition of a spectrum is a combinatorial model for the topological circle.
We can define $\mathbb{S}^1\in\sSet$ as the coequalizer of $\Delta[0] \rightrightarrows \Delta[1]$, where the two maps come from the two morphisms $\mathbf{1}\to\mathbf{2}$ in $\Delta$. As this construction gives a canonical map $\Delta[0]\to\mathbb{S}^1$, we can view $\mathbb{S}^1$ as an object of $\sSet_*$.

\begin{definition} \label{def.spectrum}
A \emph{spectrum} is a sequence $X = (X^n,\sigma^n)$ indexed by the natural numbers such that $X^n\in\sSet_*$ and $\sigma^n\colon \mathbb{S}^1\wedge X^n \to X^{n+1}$ is a map of pointed simplicial sets for each $n\in\N$. Maps of spectra $f\colon (X^n,\sigma^n) \to (Y^n,\sigma'^n)$ are defined in the obvious way as collections of maps $f^n\colon X^n\to Y^n$ such that $f^{n+1}\sigma^n = \sigma'^n (\mathbb{S}^1\wedge f^n)$ for all $n$. The category of spectra will be denoted by $\Sp$.
\end{definition}

For a spectrum $X = (X^n,\sigma^n)$ and $m\in\Z$, we can define the \emph{$m$-th stable homotopy group}
\[ \pi^s_m(X) := \varinjlim_{n\gg 0} \pi_{m+n}(X^n) \]
(see~\cite[8.21]{Switzer} for details). In fact, $\pi_m^s$ is a functor $\Sp \to \Ab$ and one can define the class $\W$ of weak equivalences on $\Sp$ as those morphisms $f\colon X \to Y$ for which $\pi_m^s(f)$ is an isomorphism for all $m\in\Z$.

The class $\W$ is a part of the \emph{Bousfield-Friedlander model structure} $(\Sp,\W,\cal B,\cal F)$, \cite[Sec.\,2]{BF-spectra}. A morphism $f\colon X \to Y$ in $\Sp$ a cofibration in this model structure if $f^0\colon X^0 \to Y^0$ and the pushout morphisms
\[ X^{n+1} \sqcup_{\mathbb{S}^1\wedge X^n} (\mathbb{S}^1\wedge Y^n) \longrightarrow Y^{n+1} \]
are cofibrations (i.e., inclusions) in $\sSet_*$ for all $n\ge 0$. It follows by induction on $n$ (e.g., using~\cite[Coro.\,1.1.11]{Hovey_libro}) that all $f^n\colon X^n\to Y^n$ are cofibrations in $\sSet_*$. Fibrant spectra $X$ are those for which each $X^n$ is fibrant in $\sSet_*$ (i.e., $X^n$ is a Kan complex) and the maps of topological spaces $\lvert X^n\rvert \to \lvert X^{n+1}\rvert^{\lvert\mathbb{S}^1\rvert}$ adjoint to $\vert\sigma^n\rvert\colon \lvert\mathbb{S}^1\rvert \wedge \lvert X^n\rvert \to \lvert X^{n+1}\rvert$ are weak homotopy equivalences (i.e., they induce bijections for all $\pi_n(-,x)$ with $x \in \lvert X^n\rvert$ and $n\ge 0$).
This model structure is known to be combinatorial (to see  this, either combine \cite[Thm.\,4.7]{Barwick-localizations} with \cite[Sec.\,3]{Hovey-spectra}, or directly apply \cite[Thm.\,10.5]{Jardine-local-htpy}). Now we can prove the desired result (cp.\ \cite[Lem.\,8.34]{Switzer}):

\begin{proposition} \label{prop.htpy groups and spectra}
Let $I$ be a directed set and $X \in \Sp^I$. Then the canonical map $\varinjlim_I \pi_m^s(X(i)) \to \pi_m^s(\hocolim_I X)$ is an isomorphism for each $m\in\Z$.
\end{proposition}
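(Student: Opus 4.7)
The plan is to reduce the statement to the unstable analogue already established in Lemma~\ref{lem.htpy groups and sSet_*}, by performing a single projectively cofibrant replacement in $\Sp^I$ and then analysing the situation levelwise in $\sSet_*$. Since the Bousfield--Friedlander model structure on $\Sp$ is combinatorial, Proposition~\ref{prop.diagram model structures} provides a projective model structure on $\Sp^I$. First I would take a projectively cofibrant replacement $X'\to X$; then $\hocolim_I X \cong \varinjlim_I X'$ (the strict colimit in $\Sp$), and each component $X'(i)\to X(i)$ is a weak equivalence in $\Sp$, hence induces an isomorphism on $\pi_m^s$ by definition of the weak equivalences in the Bousfield--Friedlander structure. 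Thus it suffices to prove the proposition with $X'$ in place of $X$ and $\varinjlim_I X'$ in place of $\hocolim_I X$.

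Next, because $I$ is a partially ordered set, Lemma~\ref{lem.structure fibrant cofibrant} applies to $X'$ and tells us that every structure map $X'(i)\to X'(j)$ is a cofibration of spectra. From the explicit description of cofibrations in $\Sp$ recalled before Definition~\ref{def.spectrum} (namely, that cofibrations are levelwise cofibrations), each levelwise map $(X'(i))^n\to(X'(j))^n$ is then an inclusion in $\sSet_*$. Moreover, colimits in $\Sp$ are computed levelwise, so $(\varinjlim_I X')^n=\varinjlim_I(X'(i))^n$ as pointed simplicial sets, and this colimit is the directed union of the $(X'(i))^n$.

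The key step is then to check, for each fixed $n$, that the canonical map
\[
\varinjlim_I \pi_{m+n}\!\left((X'(i))^n\right)\longrightarrow \pi_{m+n}\!\left(\varinjlim_I (X'(i))^n\right)
\]
is a bijection. But the proof of Lemma~\ref{lem.htpy groups and sSet_*} never uses anything beyond the fact that the transition maps in the relevant diagram are inclusions in $\sSet_*$ and that the colimit is the directed union: the compactness argument based on \cite[Proposition 5.7]{Switzer} and \cite[Proposition 7.52]{Switzer} applies verbatim to the diagram $\bigl((X'(i))^n\bigr)_{i\in I}$. Hence the displayed map is an isomorphism.

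Finally, since $\pi_m^s$ is defined as a sequential colimit $\varinjlim_{n\gg 0}\pi_{m+n}(-^n)$ and two directed colimits of abelian groups commute, we conclude
\[
\pi_m^s(\hocolim_I X)=\varinjlim_n \pi_{m+n}\!\left(\varinjlim_I (X'(i))^n\right)\cong \varinjlim_n \varinjlim_I \pi_{m+n}((X'(i))^n)\cong \varinjlim_I \pi_m^s(X'(i))\cong \varinjlim_I \pi_m^s(X(i)),
\]
as required. The main obstacle is transferring the cofibrancy information from the diagram $X'$ in $\Sp^I$ down to each fixed level in $\sSet_*$: this is handled by combining Lemma~\ref{lem.structure fibrant cofibrant} (which uses that $I$ is a poset) with the levelwise description of Bousfield--Friedlander cofibrations, avoiding the delicate question of whether the evaluation functor $\Sp^I\to \sSet_*^I$ preserves projective cofibrancy of whole diagrams.
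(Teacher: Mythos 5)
Your proof is correct and takes essentially the same route as the paper's: projectively cofibrantly replace $X$ in $\Sp^I$, combine Lemma~\ref{lem.structure fibrant cofibrant} with the levelwise description of cofibrations of spectra to obtain levelwise inclusions, apply the compactness argument at each level, and interchange the two directed colimits. You are right to appeal to the compactness argument in the \emph{proof} of Lemma~\ref{lem.htpy groups and sSet_*} rather than its bare statement, since the latter concerns a $\sSet_*$-homotopy colimit and it is not obvious that evaluation $\Sp^I\to\sSet_*^I$ preserves projective cofibrancy; the paper's ``Using Lemma~\ref{lem.htpy groups and sSet_*}'' tacitly makes this same move.
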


\begin{proof}
The projective model structure on $\Sp^I$ exists by Proposition~\ref{prop.diagram model structures}. If $X'$ is a projectively cofibrant replacement of $X$, the maps $X'(i)^n \to X'(j)^n$ of pointed simplicial sets are inclusions (by Lemma~\ref{lem.structure fibrant cofibrant}) for each $i\le j$ in $I$ and each $n \ge 0$. Using Lemma~\ref{lem.htpy groups and sSet_*},
\[
\xymatrix{
\pi_m^s(\varinjlim_I X') =
\varinjlim_{n\gg 0}\pi_{m+n}(\varinjlim_I X'(i)^n) \cong
\varinjlim_I\varinjlim_{n\gg 0} \pi_{m+n}(X'(i)^n) =
\varinjlim_I\pi_m^s(X'(i)).}
\]
We conclude by combining this with $\hocolim_I X \cong \varinjlim_I X'$.
\end{proof}

\begin{remark} \label{rem.sphere spectrum htpy fp}
The category $\Sp$ is a stable model category, so $\D_\Sp$ is a strong, stable derivator and $\ho(\Sp) = \D_\Sp(\bbone)$ is a triangulated category. 
There is a privileged object in $\ho(\Sp)$ which plays the role of $\mathbb{Z}$ in $\Der(\Z)$---the \emph{sphere spectrum} $\mathbb{S} = (\mathbb{S}^n, \id_{\mathbb{S}^{n+1}})$, where for each $n\ge0$ we define
\[ \mathbb{S}^n := \underbrace{\mathbb{S}^1\wedge\ldots\wedge\mathbb{S}^1}_{n\;\mathrm{times}} \]
(we formally put $\mathbb{S}^0 = \Delta[0]_+$, the monoidal unit for the smash product).
\end{remark}

\begin{corollary}
The sphere spectrum $\mathbb{S}$ is intrinsically homotopically finitely presented in  $\D_\Sp$.
\end{corollary}

\begin{proof}
 Since there is a natural equivalence $\ho(\Sp)(\mathbb{S},-) \cong \pi_0^s$ of functors $\ho(\Sp)\to\Ab$, the result immediately follows by Proposition~\ref{prop.htpy groups and spectra}.
\end{proof}

\addcontentsline{toc}{section}{References}
\bibliographystyle{alpha}
\bibliography{refs}

\medskip
\noindent\rule{5cm}{0.4pt}

\medskip
Manuel Saor\'{\i}n -- \texttt{msaorinc@um.es}\\
{Departamento de Matem\'{a}ticas,
Universidad de Murcia, Aptdo. 4021,
30100 Espinardo, Murcia,
SPAIN}

\medskip
{Jan \v{S}\v{t}ov\'{\i}\v{c}ek -- \texttt{stovicek@karlin.mff.cuni.cz}\\
Department of Algebra,
Faculty of Mathematics and Physics,
Charles University in Prague,
Sokolovsk\'{a} 83, 18675 Praha 8,
CZECH REPUBLIC}

\medskip
Simone Virili -- \texttt{s.virili@um.es} or \texttt{virili.simone@gmail.com}\\
{Departamento de Matem\'{a}ticas,
Universidad de Murcia,  Aptdo. 4021,
30100 Espinardo, Murcia,
SPAIN}

\end{document}